\def\normo#1{\left\|#1\right\|}
\def\normb#1{\Big\|#1\Big\|}
\def\norm#1{\|#1\|}
\def\wt#1{\widetilde{#1}}
\def\wh#1{\widehat{#1}}
\def\set#1{\{#1\}}
\newcommand{\R}{{\mathbb R}}
\newcommand{\Z}{{\mathbb Z}}
\newcommand{\ft}{{\mathcal{F}}}
\newcommand{\les}{{\lesssim}}
\newcommand{\lag}{{\langle}}
\newcommand{\rag}{{\rangle}}
\newcommand{\Sch}{{\mathcal{S}}}
\numberwithin{equation}{section}
\newtheorem{theorem}{Theorem}[section]
\newtheorem{proposition}[theorem]{Proposition}
\newtheorem{lemma}[theorem]{Lemma}
\newtheorem{corollary}[theorem]{Corollary}
\newtheorem{remark}[theorem]{Remark}
\newcommand{\px}{\partial_x}
\newcommand{\pt}{\partial_t}
\newcommand{\la}{\ensuremath{\mathbb{\lambda}}}
\newcommand{\La}{{\Lambda}}
\newcommand{\del}{\delta}
\newcommand{\vep}{\varepsilon}
\begin{document}
\title[the fifth-order KdV equation]{Rough solutions of the fifth-order KdV equations}
\author[Z. Guo]{Zihua Guo}
\email{zihuaguo@math.pku.edu.cn}
\address{School of Mathematical Science, Peking University,
Beijing 100871, China. \\ Beijing International Center for Mathematical Research, Beijing 100871, China}

\author[C. Kwak]{Chulkwang Kwak}
\email{ckkwak@kaist.ac.kr}
\address{Department of Mathematical Sciences, Korea Advanced Institute of Science and Technology,
291 Daehak-ro Yuseong-gu, Daejeon 305-701, South Korea}

\author[S. Kwon]{Soonsik Kwon}
\email{soonsikk@kaist.edu}
\address{Department of Mathematical Sciences, Korea Advanced Institute of Science and Technology,
291 Daehak-ro Yuseong-gu, Daejeon 305-701, South Korea}

\begin{abstract}
We consider the Cauchy problem of the fifth-order equation arising from the Korteweg-de Vries (KdV) hierarchy
\begin{equation*}
\begin{cases}
\pt u + \px^5 u + c_1\px u\px^2 u + c_2u\px^3 u = 0 \qquad x,t \in \R \\
u(0,x) = u_0(x) \qquad u_0 \in H^s(\R)
\end{cases}
\end{equation*}
We prove a priori bound of solutions for $H^s(\R)$ with $s\geq \frac 54$ and the local well-posedness for $s \ge 2$.\\
The method is a short time $X^{s,b}$ space, which is first developed by
Ionescu-Kenig-Tataru \cite{IKT} in the context of the KP-I equation.
In addition, we use a weight on localized $X^{s,b}$ structures to reduce the contribution
of high-low frequency interaction where the low frequency has large
modulation.

As an immediate result from a conservation law, we have the fifth-order equation in the KdV hierarchy,
$$\partial_t u - \px^5 u -30u^2\px u + 20\px u\px^2 u + 10u\px^3u=0$$
is globally well-posed in the energy space $H^2$.
\end{abstract}

\thanks{}
\thanks{} \subjclass[2000]{35Q53} \keywords{ local well-posedness, the fifth-order KdV equation, KdV hierarchy, $X^{s,b}$ space. }

\maketitle

\tableofcontents

\section{Introduction}
We consider the Cauchy problem for the fifth-order KdV equation
\begin{equation}\label{eq:5kdv}
    \begin{cases}
\pt u + \px^5u + c_1\px u\px^2u + c_2u\px^3u = 0  \qquad  x,t \in \R\\
u(0,x) =u_0(x)    \qquad \qquad u_0 \in H^s(\mathbb{R})
    \end{cases}
\end{equation}
The fifth-order KdV type equation \eqref{eq:5kdv} generalizes the
second equation appearing in the KdV hierarchy:
\begin{equation}\label{eq:5kdv heirarchy}
  \partial_t u - \px^5 u -30u^2\px u + 20\px u\px^2 u + 10u\px^3u=0.
\end{equation}
Due to the theory of complete integrability, the KdV equation and
the higher-order equations in the hierarchy are solved via the
scattering and the inverse scattering methods at least for regular
and well-decaying initial data. Moreover, they enjoy infinitely many
conservation laws. However, the theory of complete integrability is
rigid so that one cannot apply to non-integrable equations. For
instance, if one slightly change a coefficient of the equation, then
the inverse scattering transform does not work. We note that there
are some other physical background of the equation \eqref{eq:5kdv}
such as higher-order water wave models, a lattice of an harmonic
oscillators. See \cite{Ponce94} for further discussion.

The purpose of this article is to study the Cauchy problem for
Sobolev initial data with low regularity in analytic manner.
Previously, Ponce \cite{Ponce94} showed \eqref{eq:5kdv} is locally
well-posed for $ s \ge 4$. Later, the third author \cite{Kwon}
improved the local well-posedness for $s> \frac 52$. Both works are
based on the energy method and local and global smoothing estimates
from dispersive effects. Furthermore, in \cite{Kwon}, it is shown
that the flow map fails to be uniformly continuous on a bounded set
of initial data for any $s \in \R$.   So, the Picard iteration
method cannot apply and such a less perturbative way is
necessary.\footnote{If one consider a weighted Sobolev spaces, then
Picard iteration may work. See, for example, \cite{KPV94}.} This is
a sharp contrast to the KdV equation, which is solved via the Picard
iteration \cite{KPV}. The issue is a strong low-high frequencies
interaction in the nonlinearity. Since the quadratic nonlinearity
has too many derivatives, they cannot be overcome by the dispersive
effect of linear part. As a result the equation shows a quasilinear
dynamics. This type of phenomenon is observed in other equations,
such as the Benjamin-Ono equation and the KP-I equation. Now we
state our main results:

\begin{theorem}\label{thm:main}
(a) Let $s\geq 5/4$, $R>0$. For any $u_0\in B_R=\{f\in H^\infty:
\norm{f}_{H^s}\leq R\}$, there exists a time $T=T(R)>0$ and a unique
solution $u=S_T^\infty (u_0)\in C([-T,T],H^\infty)$ to the
fifth-order KdV equation \eqref{eq:5kdv}. In addition, for any
$\sigma\geq 5/4$
\begin{eqnarray}
\sup_{|t|\leq T} \norm{S^\infty_T(u_0)(t)}_{H^{\sigma}}\leq
C(T,\sigma,\norm{u_0}_{H^{\sigma}}).
\end{eqnarray}

(b) Let $s\geq 2$. The map $S_T^\infty:B_R\to C([-T,T],H^\infty)$
extends uniquely to continuous mapping
\[S_T: \{f\in H^s:
\norm{f}_{H^s}\leq R\}\to C([-T,T],H^s).\]
\end{theorem}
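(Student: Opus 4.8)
The plan is to avoid the Picard iteration altogether — it is excluded by the failure of uniform continuity of the flow noted above — and instead to build the solution map from a priori estimates together with compactness and a limiting argument. Following the short-time $X^{s,b}$ philosophy of \cite{IKT}, I would work with three families of frequency-localized spaces: a short-time space $F^s$ in which the dyadic piece at frequency $2^k$ is measured in an $X^{s,1/2}$-type norm on time intervals of length $\sim 2^{-2k}$, so that the available fifth-order dispersion is captured on the correct scale at that frequency; an energy space $E^s$ built from $\sup_{|t|\le T}\norm{u(t)}_{H^s}$; and a space $N^s$ for the Duhamel/nonlinear term. The weight on the localized structures advertised in the introduction is inserted precisely to downweight the high-low pieces in which the low-frequency factor carries large modulation.

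The heart of part (a) is then three estimates, each proved dyadically by a case analysis on output and input frequencies and modulations. First, a short-time linear estimate $\norm{u}_{F^s}\les\norm{u}_{E^s}+\norm{\pt u+\px^5 u}_{N^s}$, coming from the Duhamel formula on each small interval. Second, the bilinear estimate $\norm{c_1\px u\,\px^2 u+c_2 u\,\px^3 u}_{N^s}\les\norm{u}_{F^s}^2$, which must absorb the three derivatives of the quadratic nonlinearity. Third, an energy estimate $\norm{u}_{E^s}^2\les\norm{u_0}_{H^s}^2+\norm{u}_{F^s}^3$, obtained by differentiating $\norm{u(t)}_{H^s}^2$ in time and symmetrizing the resulting trilinear form so that the worst derivative can be integrated by parts. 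Combining the three for $s\ge 5/4$ yields a closed inequality $X(T)\les\norm{u_0}_{H^s}+T^\theta X(T)^2$ with $X(T):=\norm{u}_{F^s}+\norm{u}_{E^s}$, so a continuity argument gives $X(T)\les\norm{u_0}_{H^s}$ on an interval $[-T,T]$ with $T=T(R)$ depending only on $R$. For smooth data the solution already exists locally by \cite{Kwon}; running the same bootstrap at each level $\sigma\ge 5/4$ propagates all higher norms and produces the claimed $H^\sigma$ bound with $T$ still governed only by the $H^{5/4}$ size, which is (a).

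For part (b) I would pass from smooth to rough data by a Bona–Smith type argument. Given $u_0\in H^s$ with $s\ge 2$, I approximate by $u_{0,n}\in H^\infty$ with $u_{0,n}\to u_0$ in $H^s$; part (a) provides a common lifespan $T=T(R)$ and uniform bounds $\norm{S_T^\infty(u_{0,n})}_{E^s}\les R$. The new ingredient is a difference estimate for $w=u_n-u_m$, which solves a linear equation whose coefficients involve $u_n+u_m$; because the nonlinearity is quadratic with derivatives, this linearized equation loses derivatives, so the estimate must be run at a regularity $s'$ below $s$. I expect the difference estimate to require the solutions to be controlled in $H^s$ with $s\ge 2$ even though $w$ itself is measured in a weaker norm, which is exactly why (b) needs $s\ge 2$ while (a) needs only $s\ge 5/4$: the gap measures the derivative loss in the linearized problem. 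Once $\{S_T^\infty(u_{0,n})\}$ is shown to be Cauchy in $C([-T,T],H^{s'})$, I interpolate this low-regularity convergence against the uniform high-regularity bound — truncating in frequency à la Bona–Smith to control the high-frequency tails — to upgrade convergence to $C([-T,T],H^s)$. The limit defines the continuous extension $S_T$, and uniqueness follows from the same difference estimate.

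The main obstacle throughout is the high-low interaction in which a high output frequency is resonated against a low frequency carrying large modulation: there the three derivatives of the quadratic term cannot be recovered from the fifth-order dispersion alone, and this is the source of the quasilinear behavior and the breakdown of Picard iteration. Taming this regime is the role of the weight on the short-time $X^{s,b}$ structure, and getting the energy estimate to close down to $s=5/4$ requires a careful symmetrization of the trilinear energy form so that the worst derivative falls on the smoothest factor. I expect these two points — the weighted bilinear estimate in the bad frequency regime and the symmetrized energy estimate — to be where essentially all of the difficulty lies.
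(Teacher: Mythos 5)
Your outline follows essentially the same route as the paper: short-time weighted $X^{s,b}$ spaces $F^s(T)$, $N^s(T)$ on $2^{-2k}$ time scales, the linear estimate $\norm{u}_{F^s}\les\norm{u}_{E^s}+\norm{v}_{N^s}$, a bilinear estimate absorbing the three derivatives, an energy estimate closed by writing $uu_{xxx}=\frac12(u^2)_{xxx}-3(u_x^2)_x$ and commutator bounds, a bootstrap for (a), and for (b) a difference estimate at low regularity combined with frequency truncation of the data (the paper's Proposition~\ref{prop:energy2}, with the $\norm{u_0}_{H^{2s}}\norm{u_0-v_0}_{L^2}$ term playing exactly the Bona--Smith role you describe, and the threshold $s\ge 2$ coming from the bilinear estimate $\norm{\px(u\px^2 v)}_{N^0}\les\norm{u}_{F^0}\norm{v}_{F^2}$ for the difference equation).

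The one step that would fail as written is the closing inequality $X(T)\les\norm{u_0}_{H^s}+T^\theta X(T)^2$. In the short-time framework the bilinear and energy estimates carry no positive power of $T$: at frequency $2^k$ one must sum over $\sim T\,2^{2k}$ subintervals of length $2^{-2k}$, which consumes any naive gain from shrinking $T$, so for large data the quadratic inequality does not close by sending $T\to 0$. The paper's remedy is the scaling $u_\lambda(t,x)=\lambda^{-2}u(t/\lambda^5,x/\lambda)$: since $s_c=-3/2$, one may assume $\norm{u_0}_{H^s}\le\epsilon\ll1$, close the bootstrap $X(T')^2\les\norm{u_0}_{H^s}^2+X(T')^3+X(T')^4$ on a unit time interval using only this smallness, and recover $T=T(R)$ by undoing the scaling. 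You need this (or an equivalent large-data reduction) to make the continuity argument in part (a) go through; everything else in your plan is consistent with the paper's proof.
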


We have a priori bound of solutions for $H^s(\R)$ with $s \ge \frac 54$, while the well-posedness holds for $s \ge 2$. 
For the proof of the theorem, we use $X^{s,b}$ type
structure in a short time interval depending on frequencies. This is
first developed by Ionescu, Kenig, and Tataru \cite{IKT} in the
context of KP-I equation, see \cite{CCT} for a similar idea and
\cite{KoTz2} in the setting of Strichartz norms. The method is a
combination of modified Bourgain's $X^{s,b}$ space and the energy
method.

First, observe that the bilinear $X^{s,b}$-estimates ($ \|uv_{xxx}
\|_{X^{s,b-1}} \lesssim \|u\|_{X^{s,b}}\|v\|_{X^{s,b}}$) fails in
usual $X^{s,b}$ for any $s$, and the difficult term is the high-low
interaction component with very low frequency of the following type
\[(P_{\leq 0}u)\cdot (P_{high}v_{xxx})\]
where the low frequency is very small such that these interactions
have a small resonance and coherence. But if one use $X^{s,b}$
structure for a short time interval $(\approx
\text{(frequency)}^{-2} )$, the contribution of high frequency and
low modulation is reduced so that we can prove the bilinear estimate
(See Remark~\ref{re:counterexample}).

To compensate this short-time estimates, we need an energy-type
estimates. In fact, we could not close the energy estimate solely
using Ionescu-Kenig-Tataru's method. The enemy is the high-low
interactions where the low frequency component has the largest
modulation. In \cite{GPWW}, the first author \emph{et. al.} used a
weight that was first used in \cite{IK} to strengthen estimates for this
interaction. It turns out that we have to use the weight differently on low and high frequencies.(See \eqref{beta weight}) Intuitively, we put more modulation regularity for the
low frequency component $P_{\leq 0}u$. Then this helps improving
the high-low interaction and the energy estimates.

The trade-off of using such a weight is to worsen the $high-high \to
low$ interactions in the nonlinear estimates
\[P_{\leq 0}(P_{high}u\cdot P_{high}v_{xxx}).\]
Fortunately, after rewriting the nonlinear term in the divergence
form $c_1\px u\px^2u + c_2u\px^3u=c_1'\partial_x(\px u \px
u)+c_2'\px(u\px^2u)$, we are able to choose a weight to balance both
purposes.

Around the time when we completed this work, we learned Kenig and Pilod
\cite{kenig-pilod} have worked on the same problem with a similar
idea an obtained the same result. They used the short-time $X^{s,b}$ structure and the modified
energy method. The modified energy is an energy norm with cubic
correctional terms that make a cancellation and so improve the
energy bound. \\
There are many works on similar types of higher-order dispersive
equations. For instance, the Kawahara equation, which is the
fifth-order equation with nonlinearity $uu_x$ \cite{BL} and the
fifth-order equation in the modified KdV hierarchy, which has the
cubic nonlinearities such as $u^2u_{xxx}$ \cite{Kwon2}. As opposed
to \eqref{eq:5kdv}, the dynamics of these equations are semi-linear
in the sense that the flow map is locally Lipschitz continuous and
so solved via the Picard iteration, since nonlinear feedback of
these equations are weaker.

Combined with the second conservation law in the KdV hierarchy,
$$ H_2(u) = \int \frac 12 (\partial_x^2u)^2 -5u\partial_x (u^2)  +   \frac 52 u^4 \,dx,
$$
 we can obtain the global well-posedness for the equation \eqref{eq:5kdv heirarchy}.
\begin{corollary}\label{cor:gwp}
  The Cauchy problem \eqref{eq:5kdv heirarchy} is globally well-posed in $H^2$.
  \footnote{\eqref{eq:5kdv heirarchy} have an additional cubic term. The nonlinear
  estimate and the energy estimate for this term are essentially easier.
  Thus, we only sketch the proof. See Remark \ref{re:cor1.2-bilinear} and \ref{re:cor1.2-energy}.}
\end{corollary}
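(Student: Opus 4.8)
The plan is to upgrade the local theory of Theorem~\ref{thm:main}(b) to a global statement by using the two lowest conservation laws of the hierarchy, the $L^2$ mass and the Hamiltonian $H_2$, to produce an a priori $H^2$ bound that is uniform in time, and then to iterate the local existence. First I would verify that the scheme proving Theorem~\ref{thm:main}(b) applies to \eqref{eq:5kdv heirarchy}. The only difference from \eqref{eq:5kdv} is the cubic term $30u^2\px u = 10\px(u^3)$; it carries one derivative and is already in divergence form, so, as noted in Remark~\ref{re:cor1.2-bilinear} and Remark~\ref{re:cor1.2-energy}, both the short-time bilinear estimate and the weighted energy estimate for it are strictly easier than for the quadratic terms. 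Consequently one obtains local well-posedness of \eqref{eq:5kdv heirarchy} in $H^2$ on an interval $[-T,T]$ with $T=T(\norm{u_0}_{H^2})>0$, with $u\in C([-T,T],H^2)$ and continuous dependence on the data.

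Next I would record the conservation laws. For Schwartz data a direct integration by parts shows $\frac{d}{dt}\int u^2\,dx=0$, as all the resulting contributions vanish upon integration; and $H_2(u)$ is constant along the flow, being the second conserved integral of the KdV hierarchy. Because the $H^2$ solution produced above is a limit of smooth solutions and the flow map is continuous on $H^2$, both identities persist for $u_0\in H^2$, giving $\norm{u(t)}_{L^2}=\norm{u_0}_{L^2}$ and $H_2(u(t))=H_2(u_0)$ throughout the interval of existence.

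The one quantitative step is to show that $H_2$ together with the $L^2$ mass controls the $H^2$ norm. In $H_2(u)$ the leading part is the positive definite $\half{1}\normo{\px^2 u}_{L^2}^2$ and the quartic part $\frac{5}{2}\normo{u}_{L^4}^4$ has a favorable sign, so it suffices to absorb the cubic term. By the one-dimensional Gagliardo--Nirenberg and interpolation inequalities the cubic term is bounded by $\les\normo{u}_{L^2}^{7/4}\normo{\px^2 u}_{L^2}^{5/4}$, in which the power of $\normo{\px^2 u}_{L^2}$ is $5/4<2$; Young's inequality then absorbs it into $\frac12\normo{\px^2 u}_{L^2}^2$ at the cost of a power of $\normo{u}_{L^2}$. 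Using that both $H_2(u)$ and $\norm{u}_{L^2}$ are conserved, this yields $\normo{\px^2 u(t)}_{L^2}^2\les H_2(u_0)+C(\norm{u_0}_{L^2})$, and interpolating against the conserved $L^2$ norm gives a bound on $\norm{u(t)}_{H^2}$ depending only on $\norm{u_0}_{H^2}$, uniformly in $t$.

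Finally, since the local existence time in Theorem~\ref{thm:main}(b) depends only on the $H^2$ norm of the data, and this norm cannot grow beyond the uniform bound just obtained, the solution can be reconstructed on successive intervals of fixed length and thereby extended to all of $\R$; the same argument applied to differences, together with the continuity furnished by the local theory, gives global continuous dependence. I expect the main point to be the coercivity of the indefinite cubic part of $H_2$: once one checks that the derivative count in the Gagliardo--Nirenberg estimate leaves a subquadratic power of $\normo{\px^2 u}_{L^2}$, the absorption and the subsequent global iteration are routine, which is why only a sketch is needed.
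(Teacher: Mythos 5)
Your proposal is correct and follows exactly the route the paper intends: local well-posedness in $H^2$ from Theorem~\ref{thm:main}(b) with the extra cubic term handled as in Remarks~\ref{re:cor1.2-bilinear} and \ref{re:cor1.2-energy}, conservation of mass and of $H_2$, coercivity of $H_2$ via Gagliardo--Nirenberg (your exponent $5/4<2$ on $\normo{\px^2u}_{L^2}$ is the right count), and iteration of the local theory. The paper leaves all of this as a sketch, and your write-up supplies the standard details consistently with it.
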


The paper is organized as follows: In Section 2, we present notations and define function spaces.
In Section 3 and 4, we prove the bilinear estimates, the energy estimates.
In Section 5, we sketch the proof of well-posedness. We collect the proofs for known propositions in the appendix for convenience of readers.

\noindent{\bf Acknowledgement.} We appreciate Didier Pilod for pointing out error in the first draft. Z.G. is partially supported by NNSF
of China (No. 11001003) and RFDP of China (No. 20110001120111). C.K.
is partially supported by NRF(Korea) grant 2010-0024017. S.K. is
partially supported by TJ Park science fellowship and NRF(Korea)
grant 2010-0024017.

\section{Notations and Definitions}

For $x,y \in \R_+$, $x \lesssim y$ means that there exists $C>0$
such that $x \le Cy$. And $x \sim y$ means $x \lesssim y$ and $y
\lesssim x$. Let $a_1,a_2,a_3 \in \R$. The quantities $a_{max} \ge
a_{med} \ge a_{min}$ can be conveniently defined to be the maximum,
median and minimum values of $a_1,a_2,a_3$ respectively.

For $f \in \Sch ' $ we denote by $\hat{f}$ or $\ft (f)$ the Fourier transform of $f$ with respect to both spatial and time variables,
$$\hat{f}(\xi , \tau)=\int _{\R ^2} e^{-ix\xi}e^{-it\tau}f(x,t)dxdt .$$
Moreover, we use $\ft_x$ and $\ft_t$ to denote the Fourier transform
with respect to space and time variable respectively. Let $\Z_+=\Z
\cap [0,\infty]$. Let $I_{\le0}=\{\xi:|\xi|<3/2\}$, $\widetilde{I
}_{\le0}=\{\xi:|\xi|\le2\}$. For $k \in \Z_{>0}$ let $I_k$ and
$\widetilde{I}_k$ be dyadic intervals with $I_k \subset
\widetilde{I}_k$. More precisely $ I_k=\{\xi:|\xi|\in
[(3/4)\cdot2^k,(3/2)\cdot2^k]  \}, \widetilde{I}_k=\{\xi:|\xi|\in
[2^{k-1},2^{k+1}]  \}$.

Let $\eta_0: \R \to [0,1]$ denote a smooth bump function supported in $ [-2,2]$ and equal to $1$ in $[-1,1]$.
For $k \in \Z $, let $\chi_k(\xi) = \eta_0(\xi/2^k) - \eta_0(\xi/2^{k-1}) $, which is supported in $\{\xi:|\xi| \in [2^{k-1},2^{k+1}]\} $, and
$$\chi_{[k_1,k_2]}=\sum_{k=k_1}^{k_2} \chi_k \quad \mbox{ for any} \ k_1 \le k_2 \in \Z .$$
For simplicity, let $\eta_k=\chi_k$ if $k \ge 1$ and $\eta_k \equiv 0$ if $k \le -1$. Also, for $ k_1 \le k_2 \in \Z $ let
$$\eta_{[k_1,k_2]}=\sum_{k=k_1}^{k_2} \eta_k \ \mbox{and} \ \eta_{\le k_2}=\sum_{k=-\infty}^{k_2} \eta_k .$$
$\{ \chi_k \}_{k \in \Z}$ is the homogeneous decomposition function sequence and $\{ \eta_k \}_{k \in \Z_+}$ is
the inhomogeneous decomposition function sequence to the frequence space. For $k\in \Z$ let $P_k$ denote the
operators on $L^2(\R)$ defined by $\widehat{P_ku}(\xi)=\mathbf{1}_{I_k}(\xi)\hat{u}(\xi)$. By a slight abuse
of notation we also defined the operators $P_k$ on $L^2(\R \times \R)$ by formulas $\ft(P_ku)(\xi,\tau)=\mathbf{1}_{I_k}(\xi)\ft(u)(\xi,\tau)$.
For $l\in \Z$ let
$$P_{\le l}=\sum_{k \le l}P_k, \quad P_{\ge l}=\sum_{k \ge l}P_k. $$
For $k,j \in \Z_+$ let
$$D_{k,j}=\{(\xi,\tau) \in \R \times \R : \xi \in \widetilde{I_k}, \tau - w(\xi) \in \widetilde{I_j}\}, D_{k,\le j}=\cup_{l\le j}D_{k,l}.$$

For $\xi \in \R$, let
$$w(\xi)=-\xi^5, $$
which is the dispersion relation associated to the equation \eqref{eq:5kdv}. For $\phi \in L^2(\R)$, let $W(t)\phi \in C(\R:L^2)$ be the linear solution given by
$$\ft_x[W(t)\phi](\xi,t)=e^{itw(\xi)}\ft_x(\phi)(\xi).$$
We introduce that $X^{s,b}$ norm associated to Eq. \eqref{eq:5kdv} which is given by
$$\norm{u}_{X^{s,b}}=\norm{\lag \tau - w(\xi)\rag^b\lag \xi \rag^s \ft(u)}_{L^2(\R^2)},$$
where $\lag \cdot \rag = (1+|\cdot|)$. The space $X^{s,b}$ turns out
to be very useful in the study of low-regularity theory for the
dispersive equations. These space were used systematically to study
nonlinear dispersive wave problems by Bourgain \cite{Bou} and used
by Kenig, Ponce and Vega \cite{KPV} and Tao \cite{Tao1}. Klainerman
and Machedon \cite{KM} used similar ideas in their study of the
nonlinear wave equation. We denote the space by $X_{T}$ localized to
the interval $[-T,T]$.

For $k \in \Z_+$, we define the weighted $X^{s,\frac 12,1}$-type space $X_k$ for frequency localized functions $f_k$,
\begin{eqnarray}\label{eq:Xk}
X_k=\left\{
\begin{array}{l}
f\in L^2(\R^2): f(\xi,\tau) \mbox{ is supported in }
\widetilde
{I}_k\times\R  \mbox{ ($\widetilde{I}_{\leq 0}\times \R$ if $k=0$)} \nonumber\\
\mbox{ and }\norm{f}_{X_k}:=\sum_{j=0}^\infty
2^{j/2}\beta_{k,j}\norm{\eta_j(\tau-w(\xi))\cdot
f(\xi,\tau)}_{L^2_{\xi,\tau}}<\infty\
\end{array}
\right\}
\end{eqnarray}
where
\begin{eqnarray}\label{beta weight}
\beta_{k,j} =\left \{
\begin{array}{lr}
2^{j/2}, &k=0,\\
1+2^{(j-5k)/8}, &k \ge 1.
\end{array}
\right.
\end{eqnarray}
\begin{remark}
We choose a parameter $\frac 18$ in the weight. In fact, we can choose any parameter from $1/8$ to $3/16 $.
But this change will not affect our result since the weight helps to reduce only the low-high interactions
where the low frequency part has large modulation.
\end{remark}
As in \cite{IKT} at frequency $2^k$ we will use the $X^{s,\frac 12,1}$ structure given by the $X_k$ norm,
uniformly on the $2^{-2k}$ time scale. For $k\in \Z_+$, we define function spaces
\begin{eqnarray*}
&& F_k=\left\{
\begin{array}{l}
f\in L^2(\R^2): \widehat{f}(\xi,\tau) \mbox{ is supported in }
\widetilde{I}_k\times\R \mbox{ ($\wt{I}_{\leq 0}\times \R$ if $k=0$)} \\
\mbox{ and }\norm{f}_{F_k}=\sup\limits_{t_k\in \R}\norm{\ft[f\cdot
\eta_0(2^{2k}(t-t_k))]}_{X_k}<\infty
\end{array}
\right\},
\\
&&N_k=\left\{
\begin{array}{l}
f\in L^2(\R^2): \widehat{f}(\xi,\tau) \mbox{ is supported in }
\widetilde{I}_k\times\R \mbox{ ($\wt{I}_{\leq 0}\times \R$ if $k=0$)} \mbox{ and } \\
\norm{f}_{N_k}=\sup\limits_{t_k\in
\R}\norm{(\tau-\omega(\xi)+i2^{2k})^{-1}\ft[f\cdot
\eta_0(2^{2k}(t-t_k))]}_{X_k}<\infty
\end{array}
\right\}.
\end{eqnarray*}
Since the spaces $F_k$ and $N_k$ are defined on the whole line, we
define then local versions of the spaces in standard ways. For $T\in
(0,1]$ we define the normed spaces
\begin{align*}
F_k(T)=&\{f\in C([-T,T]:L^2): \norm{f}_{F_k(T)}=\inf_{\wt{f}=f
\mbox{ in } \R\times [-T,T]}\norm{\wt f}_{F_k}\},\\
N_k(T)=&\{f\in C([-T,T]:L^2): \norm{f}_{N_k(T)}=\inf_{\wt{f}=f
\mbox{ in } \R\times [-T,T]}\norm{\wt f}_{N_k}\}.
\end{align*}
We assemble these dyadic spaces in a Littlewood-Paley manner. For
$s\geq 0$ and $T\in (0,1]$, we define function spaces solutions and
nonlinear terms:
\begin{eqnarray*}
&&F^{s}(T)=\left\{ u:\
\norm{u}_{F^{s}(T)}^2=\sum_{k=1}^{\infty}2^{2sk}\norm{P_k(u)}_{F_k(T)}^2+\norm{P_{\leq
0}(u)}_{F_0(T)}^2<\infty \right\},
\\
&&N^{s}(T)=\left\{ u:\
\norm{u}_{N^{s}(T)}^2=\sum_{k=1}^{\infty}2^{2sk}\norm{P_k(u)}_{N_k(T)}^2+\norm{P_{\leq
0}(u)}_{N_0(T)}^2<\infty \right\}.
\end{eqnarray*}
We define the dyadic energy space as follows: For $s\geq 0$ and $u\in
C([-T,T]:H^\infty)$
\begin{eqnarray*}
\norm{u}_{E^{s}(T)}^2=\norm{P_{\leq 0}(u(0))}_{L^2}^2+\sum_{k\geq
1}\sup_{t_k\in [-T,T]}2^{2sk}\norm{P_k(u(t_k))}_{L^2}^2.
\end{eqnarray*}

These $l^1$-type $X^{s,b}$ structures were first introduced in
\cite{Tataru} and used in \cite{IK, IKT, Tao2, GW}. The weighted
$l^1$-type $X^{s,b}$ structures in here were used in \cite{GPWW}.
\begin{lemma}[Properties of $X_k$]
Let $k, l\in \Z_+$ with $l \le 5k$ and $f_k\in X_k$. Then
\begin{equation}\label{eq:pre2}
\begin{split}
&\sum_{j=l+1}^\infty
2^{j/2}\beta_{k,j}\normo{\eta_j(\tau-\omega(\xi))
\int_{\R}|f_k(\xi,\tau')|
2^{-l}(1+2^{-l}|\tau-\tau'|)^{-4}d\tau'}_{L^2}\\
&+2^{l/2}\normo{\eta_{\leq l}(\tau-\omega(\xi)) \int_{\R}
|f_k(\xi,\tau')| 2^{-l}(1+2^{-l}|\tau-\tau'|)^{-4}d\tau'}_{L^2}\les
\norm{f_k}_{X_k}.
\end{split}
\end{equation}
In particular, if $t_0\in \R$ and $\gamma\in \Sch(\R)$, then
\begin{eqnarray}\label{eq:pXk3}
\norm{\ft[\gamma(2^l(t-t_0))\cdot \ft^{-1}(f_k)]}_{X_k}\les
\norm{f_k}_{X_k}.
\end{eqnarray}
\end{lemma}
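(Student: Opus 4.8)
The plan is to prove the quantitative estimate \eqref{eq:pre2} first, and then to read off the multiplier bound \eqref{eq:pXk3} as a corollary. Set $K_l(\tau)=2^{-l}(1+2^{-l}|\tau|)^{-4}$, an $L^1$-normalized bump at modulation scale $2^l$ (so $\norm{K_l}_{L^1(\R)}\lesssim1$). I would first record that $K_l$ is independent of $\xi$, while the modulation $\theta=\tau-w(\xi)$ is obtained from $\tau$ by the $\xi$-dependent shift $w(\xi)$; hence for each fixed $\xi$ the inner integral in \eqref{eq:pre2} is a convolution in $\theta$, and every estimate below can be carried out keeping the full $L^2_{\xi,\tau}$ norm. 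Next I would decompose $f_k=\sum_{j'\ge0}f_{k,j'}$ with $f_{k,j'}=\eta_{j'}(\tau-w(\xi))f_k$, so that $\norm{f_k}_{X_k}=\sum_{j'}2^{j'/2}\beta_{k,j'}\norm{f_{k,j'}}_{L^2}$, and put $g_{j'}(\xi,\tau)=\int_\R|f_{k,j'}(\xi,\tau')|K_l(\tau-\tau')\,d\tau'$. By the triangle inequality it then suffices to bound the contribution of each $g_{j'}$ to the left-hand side of \eqref{eq:pre2} by $\lesssim2^{j'/2}\beta_{k,j'}\norm{f_{k,j'}}_{L^2}$ and sum over $j'$.

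The core of the argument is a Schur test for the kernel $\mathbf1_{I}(\theta)\,K_l(\theta-\theta')\,\mathbf1_{J}(\theta')$, where $J=\{|\theta'|\sim2^{j'}\}$ is the input annulus and $I$ is the output region (either $\{|\theta|\sim2^{j}\}$ or $\{|\theta|\lesssim2^l\}$). Using $K_l\le2^{-l}$ and the tail bound $K_l(\theta)\lesssim2^{3l}|\theta|^{-4}$ for $|\theta|\gtrsim2^l$, the two Schur factors are computed from the lengths of $I,J$ and their separation. This should give
\begin{equation*}
\norm{\eta_{\le l}(\tau-w(\xi))g_{j'}}_{L^2}\lesssim2^{(j'-l)/2}\norm{f_{k,j'}}_{L^2}\qquad(j'\le l),
\end{equation*}
with rapid decay in $j'-l$ for $j'>l$, together with, for $j>l$,
\begin{equation*}
\norm{\eta_{j}(\tau-w(\xi))g_{j'}}_{L^2}\lesssim2^{(j+j')/2+3l-4\max(j,j')}\norm{f_{k,j'}}_{L^2},
\end{equation*}
which is $\lesssim\norm{f_{k,j'}}_{L^2}$ on the diagonal $j\sim j'$ and decays rapidly off it. I expect the main obstacle to be exactly the low-modulation regime $j'\le l$: here Young's inequality yields only $\norm{g_{j'}}_{L^2}\lesssim\norm{f_{k,j'}}_{L^2}$, which is too lossy, and one must instead extract the factor $2^{(j'-l)/2}$ reflecting how $L^1$-mass carried on an interval of length $2^{j'}$ is smeared, after averaging, over an interval of length $2^l$, lowering its $L^2$ norm by precisely this ratio. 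This gain is what cancels the prefactor $2^{l/2}$ in the second term of \eqref{eq:pre2}.

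Granting these two bounds, the summation is routine but uses the hypothesis $l\le5k$ in two places. For the second term of \eqref{eq:pre2}, the case $j'\le l$ gives $2^{l/2}\norm{\eta_{\le l}g_{j'}}_{L^2}\lesssim2^{j'/2}\norm{f_{k,j'}}_{L^2}\le2^{j'/2}\beta_{k,j'}\norm{f_{k,j'}}_{L^2}$, while $j'>l$ contributes a rapidly convergent tail. For the first term, the diagonal $j\sim j'$ reproduces $2^{j'/2}\beta_{k,j'}\norm{f_{k,j'}}_{L^2}$, and the off-diagonal sum converges because, although $\beta_{k,j}\lesssim1+2^{j/8}$ may grow, the kernel gain $2^{-4\max(j,j')}$ dominates it; here $l\le5k$ enters through $3l\le15k$, which keeps the large-$j$ tail summable. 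The same hypothesis forces $2^{(l-5k)/8}\le1$, hence $\beta_{k,j}\lesssim1$ for every $j\le l$, so the weights do no harm in the range where the mass has been collapsed. Summing over $j'$ then returns $\norm{f_k}_{X_k}$ and proves \eqref{eq:pre2}.

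Finally I would deduce \eqref{eq:pXk3}. Multiplication by $\gamma(2^l(t-t_0))$ in $t$ is, on the Fourier side, convolution in $\tau$ against $\ft_t[\gamma(2^l(t-t_0))](\tau)=2^{-l}e^{-it_0\tau}\wh\gamma(2^{-l}\tau)$, and the Schwartz decay of $\wh\gamma$ gives $|2^{-l}\wh\gamma(2^{-l}(\tau-\tau'))|\lesssim K_l(\tau-\tau')$; hence pointwise $\aabs{\ft[\gamma(2^l(t-t_0))\cdot\ft^{-1}(f_k)](\xi,\tau)}\lesssim g(\xi,\tau):=\int_\R|f_k(\xi,\tau')|K_l(\tau-\tau')\,d\tau'$. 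Since the $X_k$-norm is monotone under pointwise domination of the modulus of the space--time Fourier transform, it follows that $\norm{\ft[\gamma(2^l(t-t_0))\cdot\ft^{-1}(f_k)]}_{X_k}\lesssim\sum_{j\ge0}2^{j/2}\beta_{k,j}\norm{\eta_j(\tau-w(\xi))g}_{L^2}$. The part $j>l$ is exactly the first term of \eqref{eq:pre2}. For $j\le l$ one has $\beta_{k,j}\lesssim1$, and Cauchy--Schwarz with the geometric bound $\sum_{j\le l}2^{j}\lesssim2^l$ gives $\sum_{j\le l}2^{j/2}\norm{\eta_jg}_{L^2}\lesssim2^{l/2}\norm{\eta_{\le l}g}_{L^2}$, which is the second term of \eqref{eq:pre2} (with no logarithmic loss, precisely because the sum is geometric). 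Invoking \eqref{eq:pre2} then finishes the proof.
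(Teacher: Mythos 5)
Your proposal is correct and follows essentially the same route as the paper: decompose $f_k$ into dyadic modulation pieces, bound the kernel $2^{-l}(1+2^{-l}|\tau-\tau'|)^{-4}$ by its sup on the near-diagonal/low-modulation blocks and by the tail estimate $2^{3l}|\tau-\tau'|^{-4}$ off the diagonal, and sum using $l\le 5k$ and $\beta_{k,j}\lesssim 1$ for $j\le 5k$; your Schur-test phrasing and the paper's Cauchy--Schwarz via $\norm{\int|f_k|\,d\tau'}_{L^2_\xi}\lesssim\norm{f_k}_{X_k}$ amount to the same computation. Your explicit treatment of the diagonal $j\sim j'$ by Young's inequality, and the derivation of \eqref{eq:pXk3} from \eqref{eq:pre2} via pointwise domination of the Fourier transform, correctly fill in steps the paper leaves implicit.
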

\begin{proof}
It follows directly from the definition that
\begin{equation}\label{eq:pre1}
\left\|\int_{\R}|f_{k}(\xi,\tau')|d\tau'\right\|_{L_{\xi}^2} \lesssim \norm{f_k}_{X_k}.
\end{equation}
First, assume $k \ge 1$. For the second term on the left-hand side
of \eqref{eq:pre2}, it follows from Cauchy-Schwarz inequality and
\eqref{eq:pre1} that
\begin{align*}
2^{l/2}&\Big\|\eta_{\le l}(\tau-w(\xi))\int_{\R}|f_k(\xi, \tau')|2^{-l}(1+2^{-l}|\tau - \tau'|)^{-4} d\tau'\Big\|_{L^2} \\
&\lesssim \left\|\int_{\R}|f_{k}(\xi,\tau')|d\tau'\right\|_{L_{\xi}^2} \lesssim \norm{f_k}_{X_k}.
\end{align*}
It remains to control the first term on the left-hand side of \eqref{eq:pre2}, let $f_k(\xi,\tau') = \sum_{j_1\ge0}f_{k,j_1}$, where $f_{k,j_1} = f_k(\xi,\tau')\eta_{j_1}(\tau' - w(\xi))$. For $l < j \le 5k$, we have $\beta_{k,j} \sim 1$. Thus, we have from Cauchy-Schwarz inequality that
\begin{align*}
\sum_{l < j \le 5k}\sum_{j_1 \ge 0}&2^{j/2}\Big\|\eta_j(\tau-w(\xi))\int_{\R}f_{k,j_1}(\xi, \tau')2^{-l}(1+2^{-l}|\tau - \tau'|)^{-4} d\tau'\Big\|_{L^2} \\
&\lesssim \sum_{l < j \le 5k}\sum_{j_1 \ge 0}2^{j}2^{3l - 4\max(j,j_1)}\Big\|\int_{\R}f_{k,j_1}(\xi, \tau')d\tau'\Big\|_{L_{\xi}^2} \lesssim \norm{f_k}_{X_k}.
\end{align*}
For the rest term $(j > 5k)$, since $l \le 5k$, we get similarly as before that
\begin{align*}
\sum_{5k < j }\sum_{j_1 \ge 0}&2^{j/2}\beta_{k,j}\Big\|\eta_j(\tau-w(\xi))\int_{\R}f_{k,j_1}(\xi, \tau')2^{-l}(1+2^{-l}|\tau - \tau'|)^{-4} d\tau'\Big\|_{L^2} \\
&\lesssim \sum_{5k < j }\sum_{j_1 \ge 0}2^{j}2^{(j-5k)/8}2^{3l - 4\max(j,j_1)}\Big\|\int_{\R}f_{k,j_1}(\xi, \tau')d\tau'\Big\|_{L_{\xi}^2} \lesssim \norm{f_k}_{X_k}.
\end{align*}
Now consider $k=0$. In this case, we also get the condition $l=0$.
So, it is relatively simpler than $k \ge 1 $ case. Similarly as
before, we have
\begin{align*}
\sum_{j \ge 0}\sum_{j_1 \ge 0}&2^{j/2}\beta_{0,j}\Big\|\eta_j(\tau-w(\xi))\int_{\R}f_{0,j_1}(\xi, \tau')(1+|\tau - \tau'|)^{-4} d\tau'\Big\|_{L^2} \\
&\lesssim \sum_{j \ge 0 }\sum_{j_1 \ge 0}2^{3j/2}2^{- 4\max(j,j_1)}\Big\|\int_{\R}f_{0,j_1}(\xi, \tau')d\tau'\Big\|_{L_{\xi}^2} \lesssim \norm{f_0}_{X_0}.
\end{align*}
Thus, we complete the proof of the lemma.
\end{proof}

As in \cite{IKT}, for any $k\in \Z_+$ we define the set $S_k$ of
$k-acceptable$ time multiplication factors
$$S_k=\{m_k:\R\rightarrow \R: \norm{m_k}_{S_k}=\sum_{j=0}^{10}
2^{-2jk}\norm{\partial^jm_k}_{L^\infty}< \infty\}.$$ Direct
estimates using the definitions and \eqref{eq:pXk3} show that for
any $s\geq 0$ and $T\in (0,1]$
\begin{equation}\label{eq:Sk}
\begin{cases}
\normb{\sum\limits_{k\in \Z_+} m_k(t)\cdot P_k(u)}_{F^{s}(T)}\lesssim (\sup_{k\in \Z_+}\norm{m_k}_{S_k})\cdot \norm{u}_{F^{s}(T)};\\
\normb{\sum\limits_{k\in \Z_+} m_k(t)\cdot
P_k(u)}_{N^{s}(T)}\lesssim
(\sup_{k\in \Z_+}\norm{m_k}_{S_k})\cdot \norm{u}_{N^{s}(T)};\\
\normb{\sum\limits_{k\in \Z_+} m_k(t)\cdot
P_k(u)}_{E^{s}(T)}\lesssim (\sup_{k\in \Z_+}\norm{m_k}_{S_k})\cdot
\norm{u}_{E^{s}(T)}.
\end{cases}
\end{equation}

\begin{remark}\label{re:counterexample}
As mentioned before, the bilinear estimates do not hold on the standard $X^{s,b}$ spaces:
\begin{equation}\label{eq:counterexample}
\norm{\px^3(uv)}_{X^{s,b-1}} \nleq C\norm{u}_{X^{s,b}}\norm{v}_{X^{s,b}},
\end{equation}
due to strong high-low interactions. Indeed, for fixed large frequency $N$, define the characteristic functions supported on the sets:
$$A=\set{(\tau, \xi) \in \R^2 : |\tau - \xi^5| \le 1, N \le |\xi| \le N + 1} \quad \mbox{and} \quad B=\set{(\tau, \xi) \in \R^2 : |\tau - \xi^5| \le 1, |\xi| \le 1},$$
$$\ft(u)(\tau, \xi) = \mathbf{1}_A(\tau, \xi) \quad \mbox{and} \quad \ft(v)(\tau, \xi) = \mathbf{1}_B(\tau, \xi).$$
By simple calculation, we have $\text{LHS} = NN^s$, while $\text{RHS}=N^s$, respectively. However, if one use the short time $X^{s,b}$ spaces defined above, this low-high interaction counter-example is resolved. Concretely, consider the following sets;
$$\wt{A}=\set{(\tau, \xi) \in \R^2 : |\tau - \xi^5| \le N^2, N \le |\xi| \le N + N^{-2}}$$
and
$$\wt{B}=\set{(\tau, \xi) \in \R^2 : |\tau - \xi^5| \le 1, |\xi| \le 1}.$$
And define functions $\wt{u}$ and $\wt{v}$ which satisfy
$$\ft(\widetilde{u})(\tau, \xi) = \mathbf{1}_{\wt{A}}(\tau, \xi) \quad \mbox{and} \quad \ft(\widetilde{v})(\tau, \xi) = \mathbf{1}_{\wt{B}}(\tau, \xi).$$
Computing both side, we have that for any $s \in \R$,
$$\norm{\px^3(\wt{u}\wt{v})}_{N^s} \sim N^sN^3N^{-1}N^{-2}N \sim N^sN \quad \mbox{and} \quad \norm{\wt{u}}_{F^s}\norm{\wt{v}}_{F^s} \sim N^sN.$$
Moreover, this example explains how we choose time length ($=\text{(frequency)}^{-2}$) on which we apply $X^{s,b}$ structures.
\end{remark}

\section{Bilinear estimates}\label{sec:nonlinear}

In this section we show the bilinear estimates. For $\xi_1,\xi_2 \in
\R$, let
$$ H(\xi_1,\xi_2) = \xi_1^5 + \xi_2^5 -(\xi_1+\xi_2)^5 $$
be the resonance function, which plays an crucial role in the
bilinear $X^{s,b}$-type estimates. For compactly supported functions
$f,g,h \in L^2(\R^2)$, we define
$$J(f,g,h) = \int_{\R^4} f(\xi_1,\tau_1)g(\xi_2,\tau_2)h(\xi_1+\xi_2, \tau_1+\tau_2+H(\xi_1,\xi_2)) d\xi_1d\xi_2d\tau_1\tau_2. $$
By simple changes of variables in the integration, we have
$$|J(f,g,h)|=|J(g,f,h)|=|J(f,h,g)|=|J(\widetilde{f},g,h)|,$$
where $\tilde{f}(\xi,\tau)=f(-\xi,-\tau)$.

\begin{lemma}\label{lemma:block estimate} Let $k_i \in \Z,j_i\in \Z_+,i=1,2,3$. Let $f_{k_i,j_i} \in L^2(\R\times\R) $ be nonnegative functions supported in $[2^{k_i-1},2^{k_i+1}]\times \widetilde{I}_{j_i}$.

(a) For any $k_1,k_2,k_3 \in \Z$ with $|k_{max}-k_{min}| \le 5$ and
$j_1,j_2,j_3 \in \Z_+$, then we have
\begin{eqnarray}\label{eq:block estimate1}
   J(f_{k_1,j_1},f_{k_2,j_2},f_{k_3,j_3}) \lesssim 2^{j_{min}/2}2^{j_{med}/4}2^{- \frac34 k_{max}}\prod_{i=1}^3 \|f_{k_i,j_i}\|_{L^2}.
\end{eqnarray}

(b) If $2^{k_{min}} \ll 2^{k_{med}} \sim 2^{k_{max}}$, then for all
$i=1,2,3$ we have
\begin{eqnarray}\label{eq:block estimate2}
 J(f_{k_1,j_1},f_{k_2,j_2},f_{k_3,j_3}) \lesssim 2^{(j_1+j_2+j_3)/2}2^{-3k_{max}/2}2^{-(k_i+j_i)/2}\prod_{i=1}^3 \|f_{k_i,j_i}\|_{L^2}.
\end{eqnarray}

(c) For any $k_1,k_2,k_3 \in \Z$ and $j_1,j_2,j_3 \in \Z_+$, then we
have
\begin{eqnarray}\label{eq:block estimate3}
J(f_{k_1,j_1},f_{k_2,j_2},f_{k_3,j_3}) \lesssim
2^{j_{min}/2}2^{k_{min}/2}\prod_{i=1}^3 \|f_{k_i,j_i}\|_{L^2}.
\end{eqnarray}
\end{lemma}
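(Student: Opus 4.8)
The plan is to prove all three parts by one Cauchy--Schwarz mechanism, distinguishing them only by how much of the resonance function $H$ one can exploit. First I record the algebra I will use. Writing the three signed frequencies as $\zeta_1,\zeta_2,\zeta_3$ (so $\zeta_1=\xi_1$, $\zeta_2=\xi_2$, $\zeta_3=-(\xi_1+\xi_2)$, with $\zeta_1+\zeta_2+\zeta_3=0$, the sign flips being realized by the symmetry $|J(f,g,h)|=|J(\widetilde f,g,h)|$), one has $H=\zeta_1^5+\zeta_2^5+\zeta_3^5$ on the resonant set, and the factorization
\[
H(\xi_1,\xi_2)=-5\xi_1\xi_2(\xi_1+\xi_2)(\xi_1^2+\xi_1\xi_2+\xi_2^2),\qquad |H|\sim 2^{k_1+k_2+k_3}2^{2k_{max}} .
\]
If I fix one frequency $\zeta_i$ and let $\zeta_a$ be a free convolution variable (so $\zeta_b=-\zeta_i-\zeta_a$), then
\[
\partial_{\zeta_a}(\zeta_a^5+\zeta_b^5)=5(\zeta_a-\zeta_b)(\zeta_a+\zeta_b)(\zeta_a^2+\zeta_b^2),\qquad
\partial_{\zeta_a}^2(\zeta_a^5+\zeta_b^5)=20(\zeta_a+\zeta_b)(\zeta_a^2-\zeta_a\zeta_b+\zeta_b^2).
\]
The key observation is that $\zeta_a+\zeta_b=-\zeta_i$, so the first derivative has size $\sim 2^{k_i+3k_{max}}$ whenever $|\zeta_a-\zeta_b|\sim 2^{k_{max}}$, while the second derivative always has size $\sim 2^{3k_{max}}$.

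The common scheme is as follows. Using the symmetries $|J(f,g,h)|=|J(g,f,h)|=|J(f,h,g)|=|J(\widetilde f,g,h)|$ I may permute the three factors and flip signs freely, hence choose which factor to ``peel.'' Writing $J$ as an $L^2$ pairing of the peeled factor against the (weighted) convolution $F$ of the other two, one Cauchy--Schwarz gives $|J|\le \|f_{k_i,j_i}\|_{L^2}\|F\|_{L^2}$, and a second Cauchy--Schwarz gives $\|F\|_{L^2}\le(\sup|A|)^{1/2}\prod_{l\ne i}\|f_{k_l,j_l}\|_{L^2}$, where $A=A(\zeta_i,\tau_i)$ is the set of convolution variables on which both surviving factors are supported (the shift by $H$ in the definition of $J$ makes the relevant change of variables measure-preserving). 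Everything reduces to bounding $|A|$: one modulation support contributes a $\tau$-slab, and the other modulation constraint cuts out a frequency slab $\{|H-\mathrm{const}|\lesssim 2^{j}\}$ whose measure is governed by the derivatives of $H$ above.

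For (c) I use no resonance at all: the two surviving factors force a $\tau$-slab of length $2^{j_{min}}$ and a frequency interval of length $2^{k_{min}}$, so $|A|\lesssim 2^{k_{min}}2^{j_{min}}$ once I peel the factor carrying neither the smallest frequency nor the smallest modulation; this is \eqref{eq:block estimate3}. For (b), where one frequency is $\ll$ the other two, I peel $f_{k_i,j_i}$; the survivors then satisfy $\zeta_a+\zeta_b=-\zeta_i$ with $|\zeta_a-\zeta_b|\sim 2^{k_{max}}$, so $|\partial_{\zeta_a}H|\sim 2^{k_i+3k_{max}}$ is nondegenerate and the slab has frequency-measure $\lesssim 2^{j_b-k_i-3k_{max}}$. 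Together with a $\tau$-slab of length $2^{j_a}$ this yields $|A|\lesssim 2^{j_a}2^{j_b-k_i-3k_{max}}$, hence $|J|\lesssim 2^{(j_a+j_b)/2}2^{-3k_{max}/2-k_i/2}\prod\|f_{k_l,j_l}\|_{L^2}$, which is exactly \eqref{eq:block estimate2} for every $i$ (using $j_a+j_b=j_1+j_2+j_3-j_i$). For (a) the three frequencies are comparable, so $\zeta_a-\zeta_b$ may vanish and the first-derivative bound is useless; instead I peel the factor of largest modulation and use $|\partial_{\zeta_a}^2H|\sim 2^{3k_{max}}$, whose van der Corput (sublevel) estimate gives frequency-measure $\lesssim (2^{j_{med}}/2^{3k_{max}})^{1/2}$. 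With the $\tau$-slab of length $2^{j_{min}}$ this is $|A|\lesssim 2^{j_{min}}2^{j_{med}/2}2^{-3k_{max}/2}$, i.e. \eqref{eq:block estimate1}.

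The main obstacle is the correct identification of which derivative controls the frequency slab, and the accompanying nondegeneracy. In (a) the first derivative genuinely vanishes on the diagonal $\zeta_a=\zeta_b$, which forces the passage to the second derivative and the square-root (sublevel) loss; here one must verify that $\zeta_a^2-\zeta_a\zeta_b+\zeta_b^2$, and thus $\partial^2H$, stays $\sim 2^{3k_{max}}$ throughout the range $|k_{max}-k_{min}|\le 5$, and that peeling the maximal-modulation factor leaves the surviving modulations as $j_{min},j_{med}$. In (b) the delicate point is that the nondegenerate derivative must be taken in a high-frequency convolution variable, \emph{even when $f_{k_i,j_i}$ is the low-frequency factor}; a careless choice of convolution variable appears to degenerate, so the symmetry reduction must be arranged so that $\zeta_a+\zeta_b=-\zeta_i$ is the small quantity and $\zeta_a-\zeta_b\sim 2^{k_{max}}$ the large one. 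Once the right derivative is fixed in each regime, the sublevel-set measure bounds and the bookkeeping of the two Cauchy--Schwarz steps are routine.
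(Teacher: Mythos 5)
Your argument is correct and is essentially the standard proof of these block estimates: the paper itself does not reproduce one, deferring to Tao's Proposition 6.1 and to \cite{CG}, and the double Cauchy--Schwarz/convolution-support scheme you describe --- measuring $|A|$ by a $\tau$-slab of length $2^{\min(j_a,j_b)}$ times a frequency slab $\{H\in I\}$, $|I|\sim 2^{\max(j_a,j_b)}$, controlled by the first derivative $5(\zeta_a-\zeta_b)(\zeta_a+\zeta_b)(\zeta_a^2+\zeta_b^2)$ of the resonance in the separated-frequency case and by the second derivative via a sublevel estimate in the comparable-frequency case --- is exactly the argument used there. The exponent bookkeeping ($|A|^{1/2}$ giving $2^{j_{min}/2}2^{j_{med}/4}2^{-3k_{max}/4}$ in (a), $2^{(j_a+j_b)/2}2^{-(k_i+3k_{max})/2}$ in (b), and $2^{j_{min}/2}2^{k_{min}/2}$ in (c) after peeling a factor that is neither the frequency- nor the modulation-minimizer) checks out.
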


Lemma \ref{lemma:block estimate} is obtained in a similar way to
Tao's (\cite{Tao1}, Proposition 6.1) in the context of the KdV
equation. For the fifth-order equation, it was first shown by Chen,
Li, Miao and Wu \cite{CLMW}. But there was error in the
\emph{high-high $\to$ high} case and was corrected in \cite{CG}. See
\cite{CG} for the proof. We rewrite the lemma in the following form:

\begin{corollary}\label{cor:block estimate}
Assume $k_i \in \Z,j_i\in \Z_+,i=1,2,3$ and $f_{k_i,j_i} \in
L^2(\R\times\R) $ be functions supported in $D_{k_i,j_i}$, $i=1,2$.

(a) For any $k_1,k_2,k_3 \in \Z$ with $|k_{max}-k_{min}| \le 5$ and
$j_1,j_2,j_3 \in \Z_+$, then we have
\begin{eqnarray}\label{eq:block estimate1.1}
   \| \mathbf{1}_{D_{k_3,j_3}}(\xi,\tau)(f_{k_1,j_1}\ast f_{k_2,j_2})\|_{L^2} \lesssim 2^{j_{min}/2}2^{j_{med}/4}2^{- \frac34 k_{max}}\prod_{i=1}^2 \|f_{k_i,j_i}\|_{L^2}.
\end{eqnarray}

(b) If $2^{k_{min}} \ll 2^{k_{med}} \sim 2^{k_{max}}$, then for all
$i=1,2,3$ we have
\begin{eqnarray}\label{eq:block estimate2.1}
 \| \mathbf{1}_{D_{k_3,j_3}}(\xi,\tau)(f_{k_1,j_1}\ast f_{k_2,j_2})\|_{L^2} \lesssim 2^{(j_1+j_2+j_3)/2}2^{-3k_{max}/2}2^{-(k_i+j_i)/2}\prod_{i=1}^2 \|f_{k_i,j_i}\|_{L^2}.
\end{eqnarray}

(c) For any $k_1,k_2,k_3 \in \Z$ and $j_1,j_2,j_3 \in \Z_+$, then we
have
\begin{eqnarray}\label{eq:block estimate3.1}
\| \mathbf{1}_{D_{k_3,j_3}}(\xi,\tau)(f_{k_1,j_1}\ast
f_{k_2,j_2})\|_{L^2} \lesssim
2^{j_{min}/2}2^{k_{min}/2}\prod_{i=1}^2 \|f_{k_i,j_i}\|_{L^2}.
\end{eqnarray}
\end{corollary}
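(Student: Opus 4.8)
The plan is to deduce the convolution estimates directly from the trilinear bounds of Lemma \ref{lemma:block estimate} by duality. The key observation is that the quantity $\|\mathbf{1}_{D_{k_3,j_3}}(\xi,\tau)(f_{k_1,j_1}\ast f_{k_2,j_2})\|_{L^2}$ and the trilinear form $J(f_{k_1,j_1},f_{k_2,j_2},f_{k_3,j_3})$ are two faces of the same object. First I would pair the convolution against an arbitrary test function $h$ supported in $D_{k_3,j_3}$: by Cauchy--Schwarz,
\begin{equation*}
\|\mathbf{1}_{D_{k_3,j_3}}(f_{k_1,j_1}\ast f_{k_2,j_2})\|_{L^2}
= \sup_{\|h\|_{L^2}\le 1,\ \supp h\subset D_{k_3,j_3}}
\left|\int_{\R^2} h\,(f_{k_1,j_1}\ast f_{k_2,j_2})\,d\xi\,d\tau\right|.
\end{equation*}
The convolution pairing, written out, is $\int f_{k_1,j_1}(\xi_1,\tau_1)f_{k_2,j_2}(\xi_2,\tau_2)\,\overline{h}(\xi_1+\xi_2,\tau_1+\tau_2)\,d\xi_1 d\xi_2 d\tau_1 d\tau_2$.

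The main technical point is to reconcile this with the definition of $J$, which carries the shift by the resonance function $H(\xi_1,\xi_2)$ in its third argument. To bridge this, I would change variables in the third slot, setting $g(\xi,\tau) = \overline{h}(\xi, \tau + H(\xi_1,\xi_2))$ appropriately; more precisely, define $\wt h(\xi_1+\xi_2,\tau_1+\tau_2) = \overline{h}(\xi_1+\xi_2, \tau_1+\tau_2+H(\xi_1,\xi_2))$ so that the pairing becomes exactly $J(f_{k_1,j_1},f_{k_2,j_2},\wt h)$. Since the map $\tau_3\mapsto \tau_3 - H$ is a measure-preserving shift in the temporal variable at each fixed $(\xi_1,\xi_2)$, we have $\|\wt h\|_{L^2}=\|h\|_{L^2}\le 1$. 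One must check that the support hypothesis is preserved: $h$ is supported where $\tau_3-\omega(\xi_3)\in \wt I_{j_3}$ with $\xi_3=\xi_1+\xi_2$, and since $H(\xi_1,\xi_2)=\xi_1^5+\xi_2^5-(\xi_1+\xi_2)^5 = -\omega(\xi_1)-\omega(\xi_2)+\omega(\xi_1+\xi_2)$, the shifted function $\wt h$ lands in precisely the modulation band indexed by $j_3$ relative to the free curve, so that $\wt h$ is an admissible third input $f_{k_3,j_3}$ for Lemma \ref{lemma:block estimate}.

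With this identification in hand, each of the three parts (a), (b), (c) follows by applying the corresponding inequality \eqref{eq:block estimate1}, \eqref{eq:block estimate2}, \eqref{eq:block estimate3} to $J(f_{k_1,j_1},f_{k_2,j_2},\wt h)$ and then taking the supremum over $\|h\|_{L^2}\le 1$, which replaces $\|\wt h\|_{L^2}$ by $1$ on the right-hand side while leaving the $2^{j},2^{k}$ factors untouched. The frequency and modulation support assumptions on $D_{k_i,j_i}$ translate the interval conditions of the lemma verbatim. I expect the only genuine obstacle to be the bookkeeping of the resonance shift, namely verifying that the shift $H$ is exactly the offset built into the definition of $J$ and that it respects the modulation localization $\wt I_{j_3}$; once this is confirmed the statement is a formal dualization, with no new analytic input beyond Lemma \ref{lemma:block estimate} itself.
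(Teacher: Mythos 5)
Your overall strategy --- dualizing the $L^2$ norm against a test function $h$ supported in $D_{k_3,j_3}$ and identifying the resulting pairing with the trilinear form $J$ so that Lemma~\ref{lemma:block estimate} applies --- is exactly the intended route; the paper offers no proof of the corollary at all (it is introduced with ``we rewrite the lemma in the following form''), and duality plus a change of variables is what that rewriting amounts to. However, your execution of the change of variables has a concrete flaw. The object $\wt h(\xi_1+\xi_2,\tau_1+\tau_2)=\overline{h}(\xi_1+\xi_2,\tau_1+\tau_2+H(\xi_1,\xi_2))$ is not well defined as a function of $(\xi_3,\tau_3)$: the shift $H(\xi_1,\xi_2)$ depends on $\xi_1$ and $\xi_2$ separately, not only on their sum, so there is no single function $\wt h$ that can serve as the third argument of $J$. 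More importantly, even granting such an $\wt h$, you feed $f_{k_1,j_1}$ and $f_{k_2,j_2}$ into the first two slots of $J$ unchanged; but these are supported in the curved regions $D_{k_i,j_i}=\{\tau-w(\xi)\in\widetilde I_{j_i}\}$, whereas Lemma~\ref{lemma:block estimate} requires its inputs to be supported in the product rectangles $[2^{k_i-1},2^{k_i+1}]\times\widetilde I_{j_i}$. For $j_i\ll 5k_i$ the $\tau$-projection of $D_{k_i,j_i}$ has size $\sim 2^{5k_i}\gg 2^{j_i}$, so the hypothesis of the lemma genuinely fails for the unsheared functions and the estimates cannot be invoked as you state.

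The repair is to apply the shear to \emph{all three} functions rather than encoding it as a $\tau$-shift of $h$ alone: set $f^{\#}(\xi,\mu)=f(\xi,\mu+w(\xi))$. This is an $L^2$ isometry carrying $D_{k,j}$ onto $\widetilde I_k\times\widetilde I_j$ (so the lemma's support hypotheses are met, after splitting into positive and negative frequency halves to match the one-sided intervals $[2^{k_i-1},2^{k_i+1}]$), and substituting $\tau_i=\mu_i+w(\xi_i)$ in the convolution pairing turns
$\int \overline{h}\,(f_{k_1,j_1}\ast f_{k_2,j_2})$ into the trilinear integral of $f_{k_1,j_1}^{\#},f_{k_2,j_2}^{\#},\overline{h}^{\#}$ with the third argument evaluated at $(\xi_1+\xi_2,\mu_1+\mu_2-H(\xi_1,\xi_2))$, i.e.\ $J$ up to the sign of $H$, which is harmless by the reflection symmetries $|J(f,g,h)|=|J(\wt f,g,h)|$ recorded after the definition of $J$ and the oddness of $w$. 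With that substitution in place, your concluding step --- applying \eqref{eq:block estimate1}--\eqref{eq:block estimate3} and taking the supremum over $\norm{h}_{L^2}\le 1$ --- goes through verbatim.
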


\begin{remark}
If the assumption is replaced by $f_{k_i,j}$ is supported in
$\widetilde{I}_{k_i} \times \widetilde{I}_{\le j}$ ($D_{k_i,\le j}$)
for $k_1,k_2,k_3 \in \Z_+$, then part (a) and (c) of Lemma
\ref{lemma:block estimate} (and of Corollary \ref{cor:block
estimate}) also hold. In addition, if $k_{min}\ne 0$, then part (b)
holds, else if $k_{min}=0$, part (b) holds for $i\in \{1,2,3\}$ with
$k_i\ne 0$. See \cite{Guo2} for the proof.

\end{remark}

\begin{proposition}[high-low $\Rightarrow$ high]\label{prop:high-low-high}
  Let $k_3 \ge 20$, $|k_2-k_3| \le 4$, $0\le k_1 \le k_2-10$, then we have
  \begin{equation}\label{eq:high-low}
    \|P_{k_3}\px^3(u_{k_1}v_{k_2}) \|_{N_{k_3}} + \|P_{k_3}(u_{k_1}\px^3v_{k_2}) \|_{N_{k_3}} \lesssim \|u_{k_1}\|_{F_{k_1}}\|v_{k_2}\|_{F_{k_2}}.
  \end{equation}
\end{proposition}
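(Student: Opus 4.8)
The plan is to reduce the $N_{k_3}$-norm to a bilinear $L^2$ estimate through duality, exploiting the definition of the spaces $F_k$, $N_k$ and the block estimates from Corollary~\ref{cor:block estimate}. First I would localize in time on the $2^{-2k_3}$ scale, which is the natural time scale at frequency $2^{k_3}$; since $|k_2-k_3|\le 4$ the output and the high input live on the same scale, so by \eqref{eq:pXk3} it suffices to prove the estimate for functions whose time Fourier support is suitably localized. Writing $f_i=\ft[u_{k_i}\cdot\eta_0(2^{2k_3}(t-t_{k_3}))]$ for the localized pieces and decomposing each $f_i$ dyadically in modulation as $f_i=\sum_{j_i}f_{k_i,j_i}$ with $f_{k_i,j_i}$ supported in $D_{k_i,j_i}$, the $N_{k_3}$-norm becomes, after pairing against a test function $g_{k_3,j_3}\in X_{k_3}$ of modulation $2^{j_3}$, a sum of terms controlled by $J(f_{k_1,j_1},f_{k_2,j_2},g_{k_3,j_3})$ multiplied by the symbol weight $2^{3k_3}$ coming from the three derivatives and the factor $(\tau-w(\xi)+i2^{2k_3})^{-1}$.

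The core of the argument is then the resonance analysis. Here the resonance function is $H(\xi_1,\xi_2)=\xi_1^5+\xi_2^5-(\xi_1+\xi_2)^5$; in the high-low regime $2^{k_1}\ll 2^{k_2}\sim 2^{k_3}$ one has $|H|\sim 2^{k_1}2^{4k_3}$, so on the support of the convolution the largest modulation satisfies $2^{j_{max}}\gtrsim 2^{k_1+4k_3}$. I would split into cases according to which of $j_1,j_2,j_3$ is the maximum and apply part (b) of Corollary~\ref{cor:block estimate}, which in this unbalanced-frequency regime gains a factor $2^{-3k_{max}/2}2^{-(k_i+j_i)/2}$; choosing $i$ to be the index realizing $k_{min}=k_1$ extracts the decay in the low frequency that compensates the three derivatives. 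Collecting the $2^{j/2}\beta_{k,j}$ weights from the $X_k$-norms together with the gains from the block estimate, the $j$-summations converge because the modulation lower bound $2^{j_{max}}\gtrsim 2^{k_1+4k_3}$ feeds back against the derivative loss $2^{3k_3}$, and the $\beta$-weights are harmless on the high-frequency output since for $k\ge 1$ they only grow like $2^{(j-5k)/8}$, which is dominated once $j\lesssim 5k$ and controlled by the block-estimate decay otherwise.

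The main obstacle I anticipate is the bookkeeping of the modulation weights $\beta_{k,j}$, particularly when the low-frequency input $u_{k_1}$ carries the largest modulation. In that subcase $f_{k_1,j_1}$ sits at modulation $2^{j_1}\gtrsim 2^{k_1+4k_3}$, and one must verify that the weight $2^{j_1/2}\beta_{k_1,j_1}$ appearing in $\norm{u_{k_1}}_{F_{k_1}}$—with the $k_1=0$ branch giving the extra $2^{j_1/2}$ of \eqref{beta weight}—is genuinely recovered by the $2^{-(k_1+j_1)/2}$ gain in \eqref{eq:block estimate2.1}, so that no net loss of $2^{j_1/2}$ survives. This is exactly the interaction the weight in \eqref{beta weight} was designed to absorb, and checking that the exponents balance (rather than merely that the sum converges) is where I would spend the most care. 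The two terms $P_{k_3}\px^3(u_{k_1}v_{k_2})$ and $P_{k_3}(u_{k_1}\px^3 v_{k_2})$ are handled identically, since both place three derivatives on frequency $2^{k_3}\sim 2^{k_2}$ and differ only by which factor is differentiated, and the symmetries $|J(f,g,h)|=|J(g,f,h)|=|J(f,h,g)|$ let me normalize the roles of the inputs throughout.
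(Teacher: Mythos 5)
Your overall framework (time localization at scale $2^{-2k_3}$, modulation decomposition, reduction to the block estimates of Corollary~\ref{cor:block estimate}) is the same as the paper's, but the key step --- the choice of index $i$ in \eqref{eq:block estimate2.1} and the accounting of what absorbs the factor $2^{3k_3}$ --- is wrong as stated, and the argument does not close with it. Taking $i$ to be the low-frequency index $k_1$ yields only $2^{(j_2+j_3)/2}2^{-3k_3/2}2^{-k_1/2}$; against the output weight $2^{3k_3}\cdot 2^{j_3/2}\beta_{k_3,j_3}\max(2^{j_3},2^{2k_3})^{-1}$ this leaves a net factor of size at least $2^{3k_3/2-k_1/2}\geq 2^{k_3+7}$ even for a single value of $j_3$, and moreover the $j_3$-summation diverges for $j_3>5k_3$ because $\beta_{k_3,j_3}$ grows while nothing decays in $j_3$. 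The gain $2^{-k_1/2}$ cannot compensate three derivatives at frequency $2^{k_3}$ (and is vacuous when $k_1=0$); likewise the resonance lower bound $2^{j_{max}}\gtrsim 2^{k_1+4k_3}$ fails for $k_1=0$, where $|\xi_{min}|$, hence $|H|$, can be arbitrarily small. What actually closes the estimate in the paper is choosing $i=3$ (the \emph{output}) in part (b): the gain $2^{-(k_3+j_3)/2}$ supplies both the extra $2^{-k_3/2}$ (so that together with $2^{-3k_3/2}$ and the factor $2^{-k_3}$ produced by summing $2^{j_3/2}\max(2^{j_3},2^{2k_3})^{-1}$ over $j_3\le 5k_3$ one recovers $2^{-3k_3}$) and the $2^{-j_3/2}$ needed to sum in $j_3$. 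For $j_3>5k_3$, where $\beta_{k_3,j_3}$ grows, the paper switches to part (c) together with the observation that $|H|\ll 2^{5k_3}$ forces $\max(j_1,j_2)\ge 5k_3-4$ via \eqref{eq:support}.

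Two further points. First, your anticipated obstacle concerning the weight $2^{j_1/2}\beta_{k_1,j_1}$ on the low-frequency input is backwards: that weight makes the norm $\norm{u_{k_1}}_{F_{k_1}}$ on the right-hand side \emph{stronger}, so there is nothing to ``recover''; it is a free bonus on input factors (it is the energy estimate, not this bilinear estimate, that the weight was designed to help, and it is the high-high$\to$low output where it costs something). Second, the genuine subtlety you do not mention is the mismatch of time scales: $u_{k_1}$ is measured in $F_{k_1}$ on intervals of length $2^{-2k_1}$ but must be chopped to length $2^{-2k_3}\ll 2^{-2k_1}$; showing that $\sum_{j_1\ge 2k_3}2^{j_1/2}\norm{f_{k_1,j_1}}_{L^2}\lesssim \norm{u_{k_1}}_{F_{k_1}}$ after this re-localization requires \eqref{eq:pre2}, and is carried out explicitly in the paper's proof.
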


\begin{proof}
First, we observe that each term of the left-hand side of
\eqref{eq:high-low} has the same bound since $2^{k_2} \sim 2^{k_3}$.
From the definition, the left-hand side of \eqref{eq:high-low} is
bounded by
\begin{equation}\label{eq:high-low1}
\sup_{t_{k} \in \R} \|(\tau-w(\xi)+ i 2^{2k_3})^{-1} 2^{3k_3}
\mathbf{1}_{I_{k_3}}(\xi)\mathcal{F}[u_{k_1}\eta_0(2^{2k_3-2}(t-t_k))]\ast
\mathcal{F}[v_{k_2}\eta_0(2^{2k_3-2}(t-t_k)) ] \|_{X_{k_3}}.
\end{equation}
Set $f_{k_1}= \mathcal{F}[u_{k_1}\eta_0(2^{2k_3-2}(t-t_k))]$ and
$f_{k_2} = \mathcal{F}[v_{k_2}\eta_0(2^{2k_3-2}(t-t_k)) ]$. We
decompose $f_{k_i}$ into modulation dyadic pieces as $
f_{k_i,j_i}(\xi,\tau) = f_{k_i}(\xi,\tau)\eta_{j_i}(\tau-\xi^5) $,
$i=1,2$, with usual modification $ f_{k_i,\le j}(\xi,\tau) =
f_{k_i}(\xi,\tau)\eta_{\le j}(\tau-\xi^5)$. Then
\eqref{eq:high-low1} is bounded and reduced by
\begin{equation}\label{eq:reduction}
\sup_{{t_{k}}\in \R} 2^{3k_3}(\sum_{0\leq j_3\leq
5k_3}+\sum_{j_3\geq 5k_3}) \frac{2^{j_3
/2}\beta_{k_3,j_3}}{\max(2^{j_3},2^{2k_3})} \sum_{j_1,j_2 \ge 2k_3}
\|\textbf{1}_{D_{k_3,j_3}}\cdot (f_{k_1,j_1}\ast f_{k_2,j_2})
\|_{L^2}:=I+II.
\end{equation}
For the term $I$, by Corollary \ref{cor:block estimate} (b) we get
\begin{align*}
I\les&\sup_{{t_{k}}\in \R} 2^{3k_3}\sum_{0\leq j_3\leq 5k_3} 2^{j_3
/2}2^{-\max(j_3,2k_3)}\sum_{j_1,j_2 \ge 2k_3}
2^{j_1/2}2^{j_2/2}2^{-2k_3}\|f_{k_1,j_1}\|_{L^2}\|f_{k_2,j_2}\|_{L^2}\\
\les&\sup_{{t_{k}}\in \R} \sum_{j_1,j_2 \ge 2k_3}
2^{j_1/2}2^{j_2/2}\|f_{k_1,j_1}\|_{L^2}\|f_{k_2,j_2}\|_{L^2}\les\|u_{k_1}\|_{F_{k_1}}\|v_{k_2}\|_{F_{k_2}}.
\end{align*}
The last inequality comes from the definition of $X_k$-norm and (2.4) in \cite{Guo2}. More precisely,
\begin{align*}
\sum_{j_1 \ge 2k_3}2^{j_1/2}\norm{f_{k_1,j_1}}_{L^2} &\lesssim \sum_{j_1 > 2k_3} 2^{j_1/2}\Big\|\eta_{j_1}(\tau-\omega(\xi))
\int_{\R}|\bar{f}_{k_1}(\xi,\tau')|\cdot
2^{-2k_3}(1+2^{-2k_3}|\tau-\tau'|)^{-4}d\tau'\Big\|_{L^2}\\
&+2^{(2k_3)/2}\normo{\eta_{\leq 2k_3}(\tau-\omega(\xi)) \int_{\R}
|\bar{f}_{k_1}(\xi,\tau')| 2^{-2k_3}(1+2^{-2k_3}|\tau-\tau'|)^{-4}d\tau'}_{L^2}\\
&\lesssim \norm{\ft[u_{k_1}\eta_0(2^{2k_1}(t-t_k))]}_{X_{k_1}} \\
&\lesssim \norm{u_{k_1}}_{F_{k_1}},
\end{align*}
where $\bar{f}_{k_1}=\ft[u_{k_1}\eta_0(2^{2k_1}(t-t_k))]$, and by same method, we have
$$\sum_{j_2 \ge 2k_3}2^{j_2/2}\norm{f_{k_2,j_2}}_{L^2} \lesssim \norm{v_{k_2}}_{F_{k_2}}.$$
This computation shows why we only consider the reduction form \eqref{eq:reduction} and will be used repeatedly in this and next sections. \\
For the term $II$, by the support properties, we have
$\textbf{1}_{D_{k_3,j_3}}\cdot (f_{k_1,j_1}\ast f_{k_2,j_2})\equiv
0$ unless
\begin{align}\label{eq:support}
2^{j_{max}}\sim \max(2^{j_{med}},|H|),
\end{align}
where $|H| \sim |\xi_{max}|^4|\xi_{min}|$ and $|\xi_{max}| = \max(|\xi_1|,|\xi_2|,|\xi_1+\xi_2|), |\xi_{min}|=\min(|\xi_1|,|\xi_2|,|\xi_1+\xi_2|)$.\\
Since in this case $|H|\ll 2^{5k_3}$, then $\max(j_1,j_2)\geq
5k_3-4$. By Corollary \ref{cor:block estimate} (c) we get
\begin{align*}
II\les&\sup_{{t_{k}}\in \R} 2^{3k_3}\sum_{j_3\geq 5k_3} 2^{j_3
/2}\beta_{k_3,j_3}2^{-j_3}\sum_{j_1,j_2 \ge 2k_3}
2^{j_1/2}2^{j_2/2}2^{-5k_3/2}2^{k_1/2}\|f_{k_1,j_1}\|_{L^2}\|f_{k_2,j_2}\|_{L^2}\\
\les&\sup_{{t_{k}}\in \R} \sum_{j_1,j_2 \ge 2k_3}
2^{j_1/2}2^{j_2/2}\|f_{k_1,j_1}\|_{L^2}\|f_{k_2,j_2}\|_{L^2}\les\|u_{k_1}\|_{F_{k_1}}\|v_{k_2}\|_{F_{k_2}}.
\end{align*}
Thus we finish the proof.
\end{proof}

\begin{proposition}[high-high $\Rightarrow$ high]\label{prop:high-high-high}
  Let $k_3 \ge 20$, $|k_1-k_2|, |k_2-k_3| \le 4$, then we have
  \begin{equation}\label{eq:high-high-high}
    \|P_{k_3}\px^3(u_{k_1}v_{k_2}) \|_{N_{k_3}} + \|P_{k_3}(\px^3u_{k_1}v_{k_2}) \|_{N_{k_3}}\lesssim
    2^{-\frac34k_3} \|u_{k_1}\|_{F_{k_1}}\|v_{k_2}\|_{F_{k_2}}.
  \end{equation}
\end{proposition}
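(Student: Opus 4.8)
The plan is to follow the proof of Proposition~\ref{prop:high-low-high} almost verbatim, the one essential change being that now all three frequencies are comparable, $2^{k_1}\sim 2^{k_2}\sim 2^{k_3}$, so the hypothesis $2^{k_{min}}\ll 2^{k_{med}}$ of Corollary~\ref{cor:block estimate}(b) fails and we must invoke part~(a) instead. Since $2^{k_1}\sim 2^{k_2}\sim 2^{k_3}$, placing $\px^3$ on either factor costs the same power $2^{3k_3}$, so the two terms on the left-hand side of \eqref{eq:high-high-high} obey the same bound and it suffices to treat one of them. Writing out the $N_{k_3}$ and $X_{k_3}$ norms exactly as in \eqref{eq:high-low1}--\eqref{eq:reduction}, with $f_{k_i,j_i}$ the modulation-localized pieces of the time-truncated factors, the estimate reduces to bounding
\[
\sup_{t_k\in\R}2^{3k_3}\Big(\sum_{0\le j_3\le 5k_3}+\sum_{j_3\ge 5k_3}\Big)\frac{2^{j_3/2}\beta_{k_3,j_3}}{\max(2^{j_3},2^{2k_3})}\sum_{j_1,j_2\ge 2k_3}\big\|\mathbf{1}_{D_{k_3,j_3}}\cdot(f_{k_1,j_1}\ast f_{k_2,j_2})\big\|_{L^2}=:I+II,
\]
and the final passage from $\sum_{j_1,j_2\ge 2k_3}2^{j_1/2}2^{j_2/2}\|f_{k_1,j_1}\|_{L^2}\|f_{k_2,j_2}\|_{L^2}$ to $\|u_{k_1}\|_{F_{k_1}}\|v_{k_2}\|_{F_{k_2}}$ is identical to the previous proof.

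The structural input that drives the whole estimate is the size of the resonance. From the elementary factorization
\[
H(\xi_1,\xi_2)=-5\,\xi_1\xi_2(\xi_1+\xi_2)(\xi_1^2+\xi_1\xi_2+\xi_2^2),
\]
and the fact that in the balanced regime $|\xi_1|\sim|\xi_2|\sim|\xi_1+\xi_2|\sim 2^{k_3}$ every factor has size $\sim 2^{k_3}$ except the last, which is $\sim 2^{2k_3}$, we get $|H|\sim 2^{5k_3}$, the maximal possible resonance. Combined with the support condition \eqref{eq:support}, $2^{j_{max}}\sim\max(2^{j_{med}},|H|)$, this forces $j_{max}\ge 5k_3-O(1)$. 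This large modulation is exactly what compensates the $2^{3k_3}$ coming from the three derivatives: a direct count with all $j_i\sim 2k_3$ shows that without it the sum would diverge badly.

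For the term $I$ (where $\beta_{k_3,j_3}\sim1$) I apply Corollary~\ref{cor:block estimate}(a), whose factor $2^{-\frac34 k_{max}}=2^{-\frac34 k_3}$ supplies precisely the gain demanded by \eqref{eq:high-high-high}, while its modulation factor is $2^{j_{min}/2}2^{j_{med}/4}$. I then split according to which of $j_1,j_2,j_3$ realizes $j_{max}$. When $j_3=j_{max}$ the constraint forces $j_3\sim 5k_3$, so only $O(1)$ values of $j_3$ enter and the prefactor is $2^{3k_3}2^{-j_3/2}\sim 2^{k_3/2}$; when $\max(j_1,j_2)=j_{max}\ge 5k_3-O(1)$ the index $j_3$ is summed freely. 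In either case, using $j_1,j_2\ge 2k_3$ and summing the geometric series in $j_3$, the mild polynomial factors produced by the borderline range $2k_3<j_3\le 5k_3$ are absorbed by the gap between the $2^{j_{med}/4}$ in the estimate and the $2^{j/2}$ weights in the target, yielding $I\lesssim 2^{-\frac34k_3}\sum_{j_1,j_2\ge 2k_3}2^{j_1/2}2^{j_2/2}\|f_{k_1,j_1}\|_{L^2}\|f_{k_2,j_2}\|_{L^2}$. For the term $II$ one has $j_3\ge 5k_3$, so $\beta_{k_3,j_3}\sim 2^{(j_3-5k_3)/8}$ and the prefactor becomes $2^{3k_3}\beta_{k_3,j_3}2^{-j_3/2}\sim 2^{19k_3/8}2^{-3j_3/8}$, which is summable in $j_3$; again Corollary~\ref{cor:block estimate}(a) (or (c), using $2^{j_{max}}\gtrsim 2^{5k_3}$) closes the estimate with room to spare.

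The main obstacle is the bookkeeping of the triple modulation sum rather than any single inequality: the three derivatives manufacture $2^{3k_3}$, and only the combination of the maximal resonance $|H|\sim 2^{5k_3}$, forcing $j_{max}\gtrsim 5k_3$, with the $2^{-\frac34 k_3}$ gain of Corollary~\ref{cor:block estimate}(a) makes the geometric series in $j_1,j_2,j_3$ converge to the stated bound. The secondary delicate point is verifying that the growing weight $\beta_{k_3,j_3}$ in the high-modulation piece $II$ is dominated by the $2^{-j_3/2}$ decay, which is where the choice of the exponent $\frac18$ in \eqref{beta weight} is comfortably sufficient.
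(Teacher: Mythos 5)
Your proposal is correct and follows essentially the same route as the paper: reduce to the dyadic modulation sum as in the high--low case, note that the resonance $|H|\sim 2^{5k_3}$ forces $j_{max}\gtrsim 5k_3$, and close with Corollary~\ref{cor:block estimate}(a), whose factor $2^{-\frac34 k_{max}}$ supplies the stated gain. If anything you are more explicit than the paper (which only displays the case $j_3=j_{max}\sim 5k_3$ and calls the rest ``easier''), in particular about how the spare $2^{j_{med}/4}$ absorbs the borderline $j_3$-summation and how $\beta_{k_3,j_3}$ is dominated in the large-modulation piece.
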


\begin{proof}
 As in the proof of Proposition \ref{prop:high-low-high}, both terms of the left-hand side of \eqref{eq:high-high-high} are bounded by
\begin{align}\label{eq:high-high-high1}
\sup_{{t_{k}}\in \R} 2^{3k_3} \sum_{j_3\ge 0} 2^{j_3
/2}\beta_{k_3,j_3} \sum_{j_1,j_2 \ge 0} \|(2^{j_3} + i
2^{2k_3})^{-1} \textbf{1}_{D_{k_3,j_3}}\cdot (f_{k_1,j_1}\ast
f_{k_2,j_2}) \|_{L^2}.
\end{align}
We may assume $2^{j_{max}}\sim 2^{5k_3}$, otherwise it is easier to
handle in view of \eqref{eq:support} and $|H|\sim 2^{5k_3}$. Then by
Corollary \ref{cor:block estimate} (a) and \eqref{eq:pre2} we get
\begin{align*}
\eqref{eq:high-high-high1}&\les \sup_{{t_{k}}\in \R}
2^{3k_3}\sum_{|j_3-5k_3| \le 5}\sum_{j_1,j_2\ge 2k_3} 2^{j_3/2}
2^{-\max(j_3,2k_3)}2^{j_{min}/2}2^{j_{med}/4}2^{-3k_3/4}\|f_{k_1,j_1}\|_{L^2}\|f_{k_2,j_2}\|_{L^2}\\
&\les 2^{-3k_3/4}\|u_{k_1}\|_{F_{k_1}}\|v_{k_2}\|_{F_{k_2}},
\end{align*}
and hence finish the proof of the proposition.
\end{proof}

\begin{proposition}[high-high $\Rightarrow$ low]\label{prop:high-high-low}
(a) Let $k_2 \ge 20$, $|k_1-k_2| \le 4$, $1\le k_3 \le k_2-10$, then
we have
  \begin{equation}\label{eq:high-high-low1}
  \begin{split}
  \|P_{k_3}\px(\px^2u_{k_1}v_{k_2}) \|_{N_{k_3}} &+ \|P_{k_3}\px(\px u_{k_1} \px v_{k_2}) \|_{N_{k_3}} \\
     &\lesssim 2^{(k_2-k_3)/2}(2^{k_2/2-2k_3} + 2^{-\frac32k_3})\|u_{k_1}\|_{F_{k_1}}\|v_{k_2}\|_{F_{k_2}}.
  \end{split}
  \end{equation}
(b) Let $k_2 \ge 20$, $|k_1-k_2| \le 4$, $k_3=0$, then we have
\begin{equation}\label{eq:high-high-low2}
\|P_{k_3}\px(\px^2u_{k_1}v_{k_2}) \|_{N_{k_3}} + \|P_{k_3}\px(\px
u_{k_1} \px v_{k_2}) \|_{N_{k_3}}\les
2^{2k_1}\|u_{k_1}\|_{F_{k_1}}\|v_{k_2}\|_{F_{k_2}}.
\end{equation}
\end{proposition}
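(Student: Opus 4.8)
The plan is to run the same reduction as in Propositions~\ref{prop:high-low-high} and \ref{prop:high-high-high}, while confronting the new feature that the output frequency $2^{k_3}$ is much smaller than the input frequencies $2^{k_1}\sim 2^{k_2}$. The decisive gain comes from the divergence form: in both $\px(\px^2u_{k_1}\cdot v_{k_2})$ and $\px(\px u_{k_1}\cdot\px v_{k_2})$ the outer derivative falls on the product and hence acts at the \emph{low} output frequency $2^{k_3}$, so each term is comparable to $2^{k_3}\cdot 2^{2k_2}$ times a convolution of the two inputs; since $|k_1-k_2|\le 4$ the two inner distributions of derivatives give the same bound and it suffices to treat one of them. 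Reducing the $N_{k_3}$ norm exactly as before, I must control
\[ \sup_{t_k}2^{k_3+2k_2}\sum_{j_3\ge0}\frac{2^{j_3/2}\beta_{k_3,j_3}}{\max(2^{j_3},2^{2k_3})}\sum_{j_1,j_2}\|\mathbf 1_{D_{k_3,j_3}}(f_{k_1,j_1}\ast f_{k_2,j_2})\|_{L^2}, \]
where now the $f_{k_i}$ are localized on the \emph{coarse} time scale $2^{-2k_3}$ dictated by the output.

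The main obstacle is precisely this time-scale mismatch: to replace the coarse localization by the $F_{k_i}$ norms (which live on the fine scale $2^{-2k_2}$) I cannot argue as in Proposition~\ref{prop:high-low-high}, where the output scale was the \emph{finer} one. Instead I partition the coarse window $\eta_0(2^{2k_3}(t-t_k))$ into $M\sim 2^{2(k_2-k_3)}$ fine windows $\gamma(2^{2k_2}(t-t_k)-m)$. Since multiplication is local in time, only the diagonal terms survive, so the bilinear output splits as a single sum over $m$ of pieces whose two factors are now localized on the fine scale; for each $m$ the inputs are controlled by $\|u_{k_1}\|_{F_{k_1}}$ and $\|v_{k_2}\|_{F_{k_2}}$ through \eqref{eq:pXk3}. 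Summing the $M$ pieces by the triangle inequality costs the factor $M\sim 2^{2(k_2-k_3)}$; combined with the smallness of the per-window bilinear estimates below, this reproduces the prefactors in \eqref{eq:high-high-low1}.

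For the per-window estimate I use the resonance: since $2^{k_3}\ll 2^{k_2}$ we have $|H|\sim 2^{4k_2+k_3}$, so \eqref{eq:support} forces $2^{j_{max}}\gtrsim 2^{4k_2+k_3}$. I then apply Corollary~\ref{cor:block estimate}(b), which is available for every index $i$, choosing $i$ to be the factor carrying the largest modulation, and split into two regimes. When the output modulation is largest ($j_3=j_{max}\sim 4k_2+k_3$, forced by the resonance for moderate $j_1,j_2$), the weight $\beta_{k_3,j_3}\sim 2^{(k_2-k_3)/2}$ is active; performing the $j$-sums and multiplying by $M$ produces the first summand $2^{(k_2-k_3)/2}2^{k_2/2-2k_3}$. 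When an input modulation is largest, the gain $2^{-j_{max}/2}$ of the block estimate, together with the resonance bound $j_{max}\gtrsim 4k_2+k_3$, dominates the growth of the $j_3$-sum caused by $\beta$ and, after multiplying by $M$, produces the second summand $2^{(k_2-k_3)/2}2^{-3k_3/2}$. Part (b) has the same structure but uses the weight $\beta_{0,j}=2^{j/2}$ at $k_3=0$; there the $N_0$ multiplier $2^{j_3/2}\beta_{0,j_3}/\max(2^{j_3},1)$ is $O(1)$, and running the same two regimes against $M\sim 2^{2k_2}$ yields the bound $2^{2k_1}$.

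The step I expect to be most delicate is the convergence of the weighted modulation sums in the regime where an input carries the largest modulation: there the output weight $\beta_{k_3,j_3}$ makes the $j_3$-sum grow, and one must verify that this growth is dominated by the $2^{-j_{max}/2}$ decay of Corollary~\ref{cor:block estimate}(b) and the resonance lower bound $j_{max}\gtrsim 4k_2+k_3$. Checking that the enlarged weight is still balanced by the extra smallness coming from the divergence form (the outer derivative at $2^{k_3}$ rather than $2^{k_2}$) is the crux, and is exactly the trade-off flagged in the introduction.
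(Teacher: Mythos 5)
Your treatment of part (a) is essentially the paper's proof: the same partition of the coarse window $\eta_0(2^{2k_3}(t-t_k))$ into $\sim 2^{2(k_2-k_3)}$ fine windows, the same reduction to the weighted modulation sums, the same use of the resonance bound $j_{max}\gtrsim 4k_2+k_3$ together with Corollary~\ref{cor:block estimate}(b) applied to the index carrying the largest modulation, and the same two regimes producing the two summands $2^{(k_2-k_3)/2}2^{k_2/2-2k_3}$ and $2^{(k_2-k_3)/2}2^{-\frac32 k_3}$. The exponent bookkeeping you describe checks out against \eqref{eq:high-high-low3}--\eqref{eq:high-high-low4}.

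Part (b), however, is not ``the same structure,'' and your one-sentence dispatch of it hides a genuine gap. Two issues. First, at $k_3=0$ the output frequency ranges over all of $|\xi|\le 2$, and the resonance $|H|\sim|\xi_{max}|^4|\xi_{min}|$ degenerates as $|\xi_{min}|\to 0$; the lower bound on $j_{max}$ is $4k_2+l$ where $|\xi|\sim 2^l$, not $4k_2$. One must therefore further decompose the output into dyadic pieces $|\xi|\sim 2^l$, $l\le 0$, and track the $l$-dependence through the weight $\beta_{0,j_3}=2^{j_3/2}$ and the single outer derivative $2^l$; the paper does exactly this. Second, and more seriously, in the resonant regime $2^{j_3}=2^{j_{max}}\sim |H|\sim 2^{4k_2+l}$ your strategy of ``choosing $i$ to be the factor carrying the largest modulation'' breaks down: that factor is the output, whose frequency is $\lesssim 2^l\le 1$, and by the Remark following Corollary~\ref{cor:block estimate} part (b) of the block estimate is not available for an index with $k_i=0$. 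Parts (a) and (c) of the block estimate only supply $2^{j_{min}/2}$ (plus a bounded $j_{med}$ gain), which is insufficient to recover the two factors $2^{j_1/2}2^{j_2/2}$ needed to close to the $F_{k_i}$ norms, since here $j_{med}=\max(j_1,j_2)$ can be as large as $4k_2+l$. The paper handles this case ($III$ in its notation) by an entirely different argument: Plancherel, the bound $\|\widehat{fg}\|_{L^\infty_\xi}\le\|fg\|_{L^1_x}$ on the small frequency support, and the $L^\infty_tL^2_x$ embedding of $X_k$. You would need to supply this (or an equivalent) step; as written, your proof of \eqref{eq:high-high-low2} does not go through.
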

\begin{proof}
(a) Since $k_3 \le k_2-10$, we observe that the first term is dominated by other terms. By the definition of $X_{k_3}$ and $N_{k_3}$ one take $X^{0,\frac 12,1}$-structure on time intervals of length $2^{-2k_3}$, while $\|u_{k_1}\|_{F_{k_1}}$ and $\norm{v_{k_2}}_{F_{k_2}}$ are taken in smaller intervals of length $2^{-2k_2}$. We make a partition of intervals of length $2^{-2k_3}$. Let $\gamma: \R \to [0,1]$ denote a smooth function supported in $[-1,1]$ with $ \sum_{m\in \Z} \gamma^2(x-m) \equiv 1$.
  The left-hand side of \eqref{eq:high-high-low1} is dominated by
\begin{equation}\label{eq:high-high-low3}
\begin{split}
C\sup_{t_k\in \R}&2^{k_3}2^{2k_2}\Big\|(\tau-w(\xi) +i 2^{2k_3})^{-1}\mathbf{1}_{I_{k_3}}\\
&\cdot  \sum_{|m| \le C 2^{2k_2-2k_3}} \mathcal{F}[u_{k_1}\eta_0(2^{2k_3}(t-t_k))\gamma (2^{2k_2}(t-t_k)-m)]\\
 &\ast \mathcal{F}[u_{k_1}\eta_0(2^{2k_3}(t-t_k))\gamma (2^{2k_2}(t-t_k)-m)]\Big\|_{X_{k_3}}.
\end{split}
\end{equation}
As in the proof of Proposition \ref{prop:high-low-high}, \eqref{eq:high-high-low3} dominated by
\begin{equation}\label{eq:high-high-low4}
2^{k_3}2^{2k_2}\sum_{j_3 \ge 2k_3} 2^{j_3/2}\beta_{k_3,j_3}2^{2k_2-2k_3}\sum_{j_1,j_2 \ge 2k_2} \|(\tau-w(\xi) + i2^{2k_3})^{-1} \textbf{1}_{D_{k_3,j_3}}\cdot (f_{k_1,j_1}\ast f_{k_2,j_2}) \|_{L^2}
\end{equation}
By Corollary \ref{cor:block estimate} (b), we have
\begin{equation*}
2^{4k_2-k_3}\sum_{j_3 \ge 2k_3}\sum_{j_1,j_2 \ge 2k_2}2^{-j_3/2}\beta_{k_3,j_3}2^{(j_1+j_2+j_3)/2} 2^{-3k_2/2}2^{-(k_i+j_i)/2}\norm{f_{k_1,j_1}}_{L^2} \norm{f_{k_2,j_2}}_{L^2}.
\end{equation*}
If $2^{j_3} \sim 2^{j_{max}}$, take $j_i=j_3$. Then by the support property \eqref{eq:support}, we get
\begin{align*}
\eqref{eq:high-high-low3} &\lesssim 2^{4k_2-k_3}\sum_{j_1,j_2 \ge 2k_2}2^{-\frac12(4k_2+k_3)}2^{(k_2-k_3)/2}2^{(j_1+j_2)/2} 2^{-3k_2/2}2^{-k_3/2}\norm{f_{k_1,j_1}}_{L^2} \norm{f_{k_2,j_2}}_{L^2}\\
&\lesssim 2^{k_2-\frac52k_3}\sum_{j_1,j_2 \ge 2k_2}2^{(j_1+j_2)/2} \norm{f_{k_1,j_1}}_{L^2} \norm{f_{k_2,j_2}}_{L^2}.
\end{align*}
Otherwise, since $j_3 \le 4k_2+k_3-10$, \eqref{eq:high-high-low4} can be rewritten
$$2^{4k_2-k_3}\sum_{2k_3 \le j_3 \le 4k_2+k_3 -10 }\sum_{j_1,j_2 \ge 2k_2}2^{-j_3/2}\beta_{k_3,j_3}2^{(j_1+j_2+j_3)/2} 2^{-3k_2/2}2^{-(k_i+j_i)/2}\norm{f_{k_1,j_1}}_{L^2} \norm{f_{k_2,j_2}}_{L^2}.$$
Take $j_i=j_{max}$. Then
\begin{align*}
\eqref{eq:high-high-low3} &\lesssim 2^{4k_2-k_3}\sum_{j_1,j_2 \ge 2k_2}2^{(k_2-k_3)/2}2^{(j_1+j_2)/2} 2^{-3k_2/2}2^{-k_2/2}2^{-j_{max}/2}\norm{f_{k_1,j_1}}_{L^2} \norm{f_{k_2,j_2}}_{L^2}\\
&\lesssim 2^{(k_2-k_3)/2}2^{-\frac32k_3}\sum_{j_1,j_2 \ge 2k_2}2^{(j_1+j_2)/2} \norm{f_{k_1,j_1}}_{L^2} \norm{f_{k_2,j_2}}_{L^2}.
\end{align*}

\noindent(b) The left-hand side of \eqref{eq:high-high-low2} is dominated by
\begin{align}\label{eq:high-high-lowp1}
C\sup_{t_k\in \R}&2^{2k_2}\Big\|\xi(\tau-w(\xi) +i
)^{-1}\mathbf{1}_{I_{0}}\cdot
\mathcal{F}[u_{k_1}\eta_0(t-t_k)]\ast
\mathcal{F}[v_{k_2}\eta_0(t-t_k)]\Big\|_{X_{0}}.
\end{align}
We decompose further the low frequency, then
\begin{align*}\label{eq:high-high-lowp2}
\eqref{eq:high-high-lowp1}\les \sum_{l\leq 0}\sup_{t_k\in
\R}2^l&\left(\sum_{j_3 \le 4k_2 +l - 5} + \sum_{j_3 \ge 4k_2 +l +5} + \sum_{|j_3-4k_2-l| <5} \right)2^{2k_2}2^{-j_3}\\
&\Big\|1_{D_{l,j_3}}\mathcal{F}[u_{k_1}\eta_0(t-t_k)]\ast
\mathcal{F}[v_{k_2}\eta_0(t-t_k)]\Big\|_{X_{0}} = I+II+III.
\end{align*}
From the support property \eqref{eq:support}, the first case includes $j_3 = j_{min}$ and $j_3 = j_{med}$ with $2^{j_{med}} \ll 2^{j_{max}} \sim |H|$ cases, and we regard the second one as $2^{j_3} = 2^{j_{max}} \sim 2^{j_{med}} \gtrsim |H|$ case. The last term is regarded as $j_3=j_{max}$ with $2^{j_3} \sim |H|$ case.  \\
For $I,II$ cases, we use same argument to \eqref{eq:high-high-low3};
$$I \lesssim 2^{4k_2}\sum_{l \le 0}2^l\sum_{j_3 \le 4k_2+l-5} 2^{-j_3/2}\beta_{0,j_3}\sum_{j_1,j_2 \ge 2k_2} \|\textbf{1}_{D_{l,j_3}}\cdot (f_{k_1,j_1}\ast f_{k_2,j_2}) \|_{L^2}, $$
and
$$II \lesssim 2^{4k_2}\sum_{l \le 0}2^l\sum_{j_3 \ge 4k_2+l+5} 2^{-j_3/2}\beta_{0,j_3}\sum_{j_1,j_2 \ge 2k_2} \|\textbf{1}_{D_{l,j_3}}\cdot (f_{k_1,j_1}\ast f_{k_2,j_2}) \|_{L^2}.$$
\noindent First, we consider $I$.\\
From Corollary \ref{cor:block estimate} (b), we estimate that
$$I \lesssim \sum_{l \le 0}2^{l}2^{4k_2}\sum_{j_3 \le 4k_2+l-5}\sum_{j_1,j_2 \ge 2k_2}2^{-j_3/2}\beta_{0,j_3}2^{(j_1+j_2+j_3)/2} 2^{-3k_2/2}2^{-(k_i+j_i)/2}\norm{f_{k_1,j_1}}_{L^2} \norm{f_{k_2,j_2}}_{L^2}.$$
Since $\beta_{0,j_3} = 2^{j_3/2}$, by taking $j_i=j_{max}$ and performing $j_3$ summation, we have
\begin{align*}
I &\lesssim 2^{4k_2}\sum_{l \le 0}2^l\sum_{j_3 \le 4k_2+l-5}\sum_{j_1,j_2 \ge 2k_2} 2^{j_3/2} 2^{(j_1+j_2)/2} 2^{-2k_2} 2^{-j_{max}/2} \norm{f_{k_1,j_1}}_{L^2} \norm{f_{k_2,j_2}}_{L^2} \\
&\lesssim 2^{2k_2}\sum_{l \le 0}2^l2^{-(4k_2+l)/2}\sum_{j_3 \le 4k_2+l-5}\sum_{j_1,j_2 \ge 2k_2} 2^{j_3/2} 2^{(j_1+j_2)/2} \norm{f_{k_1,j_1}}_{L^2} \norm{f_{k_2,j_2}}_{L^2} \\
&\lesssim 2^{2k_2 }\sum_{l \le 0}2^l\sum_{j_1,j_2 \ge 2k_2} 2^{(j_1+j_2)/2} \norm{f_{k_1,j_1}}_{L^2} \norm{f_{k_2,j_2}}_{L^2} \\
&\lesssim 2^{2k_2} \norm{u_{k_1}}_{F_{k_1}}\norm{v_{k_2}}_{F_{k_2}}.
\end{align*}
Now, we consider $II$.\\
In this case, since $2^{j_{max}} \sim 2^{j_{med}}$, we estimate from Corollary \ref{cor:block estimate} (c) that
\begin{align*}
II &\lesssim 2^{4k_2}\sum_{l \le 0}2^l\sum_{j_3 > 4k_2+l+5}\sum_{j_1,j_2 \ge 2k_2} 2^{-j_3/2}\beta_{0,j_3} 2^{j_{min}/2} 2^{l/2}2^{j_{med}/2}2^{-j_{med}/2} \norm{f_{k_1,j_1}}_{L^2} \norm{f_{k_2,j_2}}_{L^2} \\
&\lesssim 2^{4k_2}\sum_{l \le 0}2^l\sum_{j_3 > 4k_2+l+5}\sum_{j_1,j_2 \ge 2k_2} 2^{j_{min}/2} 2^{l/2}2^{j_{med}/2}2^{-j_{med}/2} \norm{f_{k_1,j_1}}_{L^2} \norm{f_{k_2,j_2}}_{L^2} \\
&\lesssim 2^{4k_2}\sum_{l \le 0}2^l\sum_{j_3 > 4k_2+l+5}\sum_{j_1,j_2 \ge 2k_2} 2^{-j_3/2}2^{j_{min}/2} 2^{l/2}2^{j_{med}/2} \norm{f_{k_1,j_1}}_{L^2} \norm{f_{k_2,j_2}}_{L^2} \\
&\lesssim 2^{4k_2 }\sum_{l \le 0}2^l2^{-(4k_2+l)/2}2^{l/2}\sum_{j_1,j_2 \ge 2k_2} 2^{(j_1+j_2)/2} \norm{f_{k_1,j_1}}_{L^2} \norm{f_{k_2,j_2}}_{L^2} \\
&\lesssim 2^{2k_2} \norm{u_{k_1}}_{F_{k_1}}\norm{v_{k_2}}_{F_{k_2}}.
\end{align*}
For the rest term $III$, we get from Plancherel's identity and $X_k$ embedding that
\begin{align*}
\eqref{eq:high-high-lowp1}\les&\sum_{l\leq 0}\sup_{t_k\in
\R}2^l2^{2k_2}\Big\|1_{|\xi|\sim
2^l}\mathcal{F}[u_{k_1}\eta_0(t-t_k)]\ast
\mathcal{F}[v_{k_2}\eta_0(t-t_k)]\Big\|_{L^2}\\
\les&\sup_{t_k\in \R}2^{2k_2}\Big\|u_{k_1}\eta_0(t-t_k)\cdot
v_{k_2}\eta_0(t-t_k)\Big\|_{L_t^2L_x^1}\les
2^{2k_2}\|u_{k_1}\|_{L_t^\infty L_x^2}\|v_{k_2}\|_{L_t^\infty
L_x^2}\\
\les&\sup_{t_{k_1} t_{k_2} \in \R}2^{2k_2}\|u_{k_1}\eta_0(2^{2k_1}(t-t_{k_1}))\|_{L_t^\infty L_x^2}\|v_{k_2}\eta_0(2^{2k_2}(t-t_{k_2}))\|_{L_t^\infty
L_x^2} \\
\les& 2^{2k_2}\|u_{k_1}\|_{F_{k_1}}\|v_{k_2}\|_{F_{k_2}}.
\end{align*}

\end{proof}
\begin{proposition}[low-low $\Rightarrow$ low]\label{prop:low-low-low}
  Let $0 \le k_1,k_2,k_3 \le 200$, then we have
  \begin{equation}\label{eq:low-low-low}
    \|P_{k_3}\px^3(u_{k_1}v_{k_2}) \|_{N_{k_3}} + \|P_{k_3}(\px^3u_{k_1}v_{k_2}) \|_{N_{k_3}}\lesssim \|u_{k_1}\|_{F_{k_1}}\|v_{k_2}\|_{F_{k_2}}
  \end{equation}
\end{proposition}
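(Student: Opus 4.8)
The plan is to cash in on the fact that here \emph{all three frequencies are bounded}, $2^{k_i}\le 2^{200}$, so that every derivative factor $2^{3k_i}$, the frequency weight $2^{k_{min}/2}$, and the discrepancy between the time scales $2^{-2k_i}$ (which differ by at most the constant $2^{400}$) are all harmless; the only genuine work is to sum a modulation series. First I would observe, exactly as at the start of the proof of Proposition~\ref{prop:high-low-high}, that both terms on the left of \eqref{eq:low-low-low} obey the same bound, since they differ only in where the three derivatives fall and $2^{3k_1}\sim 2^{3k_3}\lesssim 1$. After a finite partition of the $2^{-2k_3}$-interval into sub-intervals of length $2^{-2k_{max}}$ (at most $2^{400}$ of them), the reduction leading to \eqref{eq:reduction} dominates the left-hand side by
\begin{equation*}
\sup_{t_k\in\R}2^{3k_3}\sum_{j_3\ge 0}\frac{2^{j_3/2}\beta_{k_3,j_3}}{\max(2^{j_3},2^{2k_3})}\sum_{j_1,j_2\ge 2k_{max}}\big\|\mathbf{1}_{D_{k_3,j_3}}\cdot(f_{k_1,j_1}\ast f_{k_2,j_2})\big\|_{L^2},
\end{equation*}
where the $f_{k_i,j_i}$ are the modulation pieces of the time-localized profiles and, by the computation recorded after \eqref{eq:reduction} (an application of \eqref{eq:pre2}), $\sum_{j_i\ge 2k_{max}}2^{j_i/2}\|f_{k_i,j_i}\|_{L^2}\lesssim\|u_{k_1}\|_{F_{k_1}}$, resp.\ $\|v_{k_2}\|_{F_{k_2}}$.

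Next I would insert Corollary~\ref{cor:block estimate}(c), which carries no frequency restriction and yields $\|\mathbf{1}_{D_{k_3,j_3}}(f_{k_1,j_1}\ast f_{k_2,j_2})\|_{L^2}\lesssim 2^{j_{min}/2}2^{k_{min}/2}\|f_{k_1,j_1}\|_{L^2}\|f_{k_2,j_2}\|_{L^2}$, absorbing the bounded constants $2^{3k_3}$ and $2^{k_{min}/2}$. The one structural input that is not a constant is the support relation \eqref{eq:support}: since $|H|\sim|\xi_{max}|^4|\xi_{min}|\lesssim 1$ here, \eqref{eq:support} forces the two \emph{largest} of $j_1,j_2,j_3$ to be comparable (or all three to be $O(1)$). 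I would then split according to the position of $j_3$. If $j_3$ is (comparable to) the maximum, the constraint pins $j_3\sim\max(j_1,j_2)$, so the $j_3$-sum runs over $O(1)$ values and the surviving $2^{j_{min}/2}=2^{\min(j_1,j_2)/2}\le 2^{(j_1+j_2)/4}$ splits evenly between the two profiles. If $j_3=j_{min}$, the constraint forces $j_1\sim j_2\ge j_3$, and carrying out the $j_3$-sum of $\tfrac{2^{j_3/2}\beta_{k_3,j_3}}{\max(2^{j_3},2^{2k_3})}\,2^{j_3/2}$ produces at worst a power $2^{\theta j_1}$ with $\theta\le 1/8$—this is exactly where the weight $\beta_{k_3,j_3}\lesssim 2^{j_3/8}$ from \eqref{beta weight} enters—which is dwarfed by $2^{j_1/2}2^{j_2/2}$ once $j_1\sim j_2$. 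In each case the residual double sum has the form $\sum_{j_1,j_2}2^{\theta(j_1+j_2)}\|f_{k_1,j_1}\|_{L^2}\|f_{k_2,j_2}\|_{L^2}$ with $\theta\le 1/2$, hence is controlled by $\|u_{k_1}\|_{F_{k_1}}\|v_{k_2}\|_{F_{k_2}}$.

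The point worth flagging is the obstacle, which is structural rather than computational: in this low-frequency regime $H$ is $O(1)$, so—unlike in Propositions~\ref{prop:high-low-high}--\ref{prop:high-high-low}—there is no modulation gain to be harvested from the phase, and the whole estimate must be closed using only \eqref{eq:support} together with the intrinsic $2^{j/2}$ gains of the $X_k$-structure. The delicate bookkeeping is to verify that the growing weight $\beta_{k_3,j_3}$ never outruns those gains: its heaviest instance is $\beta_{0,j}=2^{j/2}$ at $k_3=0$, but there the denominator $\max(2^{j},1)=2^{j}$ cancels it back to $1$, while for $k_3\ge 1$ one has $\beta_{k_3,j}\lesssim 2^{j/8}\ll 2^{j/2}$. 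Thus there is comfortable room throughout, the sums converge without any need for cancellation, and—because $2^{3k_3}\lesssim 1$—no loss in $k_3$ is incurred, which is precisely why \eqref{eq:low-low-low} carries no smallness factor on its right-hand side.
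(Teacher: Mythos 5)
Your argument is correct, and its skeleton --- reduction to the form \eqref{eq:reduction}, dyadic modulation decomposition, a block estimate from Corollary~\ref{cor:block estimate}, and summation driven by the support relation \eqref{eq:support} with $|H|\lesssim 1$ --- is the same as the paper's, which disposes of this case in a single line by instructing the reader to repeat the proof of Proposition~\ref{prop:high-high-high} with \eqref{eq:block estimate1.1}. The one substantive difference is the choice of block estimate: the paper invokes part (a) of Corollary~\ref{cor:block estimate}, whereas you use part (c). Your choice is arguably the cleaner one, since part (a) is stated only under the hypothesis $|k_{max}-k_{min}|\le 5$, which need not hold here (e.g.\ $k_1=0$, $k_2=k_3=200$); of course, with all frequencies bounded by $2^{200}$ every implicit constant is harmless, so either route closes, but part (c) applies verbatim. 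One small inaccuracy in your bookkeeping: in the case $j_3=j_{min}$ with $k_3=0$, the $j_3$-sum of $2^{j_3/2}\beta_{0,j_3}\max(2^{j_3},1)^{-1}\cdot 2^{j_3/2}$ produces $2^{j_{max}/2}$, not $2^{j_{max}/8}$ (the exponent $1/8$ is only available for $k_3\ge 1$, where $\beta_{k_3,j_3}\lesssim 2^{j_3/8}$); this is still absorbed by $2^{j_1/4}2^{j_2/4}$ once $j_1\sim j_2$, exactly as your concluding summability claim with $\theta\le 1/2$ records, so nothing breaks.
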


\begin{proof}
  As in the proof of Proposition \ref{prop:high-high-high}, use \eqref{eq:block estimate1.1}, then we can get \eqref{eq:low-low-low}.
\end{proof}

As a conclusion to this section we prove the bilinear estimates, using the dyadic bilinear estimates obtained above.

\begin{proposition}\label{prop:bilinear}
(a) If $s \ge 1$, $T\in(0,1]$, and $u,v\in F^s(T)$ then
\begin{align}\label{eq:bilinear}
&\|\px(\px u\px v)\|_{N^s(T)} +\|\px(\px^2 u
v)\|_{N^s(T)}+\|\px(u\px^2 v)\|_{N^s(T)}\nonumber\\
&\les \|u\|_{F^{1}(T)}\|v\|_{F^s(T)} +
\|u\|_{F^s(T)}\|v\|_{F^{1}(T)}.
\end{align}
(b) If $T \in (0,1]$, $u \in F^0(T)$ and $v \in F^{2}(T)$, then
\begin{align}\label{eq:bilinear1}
\|\px(\px u\px v)\|_{N^0(T)} +\|\px(\px^2 u
v)\|_{N^0(T)}+\|\px(u\px^2 v)\|_{N^0(T)}\les
\|u\|_{F^0(T)}\|v\|_{F^{2}(T)}.
\end{align}
\end{proposition}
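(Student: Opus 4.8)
The plan is to decompose the three quadratic terms into Littlewood--Paley pieces, reduce everything to the dyadic estimates of Propositions \ref{prop:high-low-high}--\ref{prop:low-low-low}, and then reassemble in the $\ell^2$-in-frequency structure defining $N^s(T)$. Writing $u=\sum_{k_1}P_{k_1}u$, $v=\sum_{k_2}P_{k_2}v$ and applying $P_{k_3}$ to the output, a term such as $\px(\px^2 u_{k_1} v_{k_2})$ contributes to output frequency $k_3$ only when the convolution constraint $\xi_3=\xi_1+\xi_2$ forces at least two of $k_1,k_2,k_3$ to be comparable. First I would replace $P_{k_1}u$, $P_{k_2}v$ by near-optimal extensions off $[-T,T]$; since a product of extensions extends the product, this bounds $\|P_{k_3}(\cdot)\|_{N_{k_3}(T)}$ by the whole-line quantity to which the dyadic propositions apply. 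One then splits $\sum_{k_1,k_2}$ (assuming $k_1\le k_2$ by the symmetry of $J$) into four regimes: low--low$\,\to\,$low (all frequencies $\lesssim 1$), high--low$\,\to\,$high ($k_1\le k_2-10$, $k_3\sim k_2$), high--high$\,\to\,$high ($k_1\sim k_2\sim k_3$), and high--high$\,\to\,$low ($k_1\sim k_2\gg k_3$), governed respectively by Propositions \ref{prop:low-low-low}, \ref{prop:high-low-high}, \ref{prop:high-high-high}, \ref{prop:high-high-low}.

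Second, I would carry out the summation regime by regime. The low--low$\,\to\,$low piece involves finitely many blocks and is immediate. In the high--low$\,\to\,$high regime Proposition \ref{prop:high-low-high} gives no frequency loss, so with $k_2\sim k_3$ the output weight $2^{sk_3}$ is absorbed by $2^{sk_2}\|v_{k_2}\|_{F_{k_2}}$ (an $\ell^2_{k_3}$ sequence controlled by $\|v\|_{F^s(T)}$) and the low factor contributes $\sum_{k_1\le k_3-10}\|u_{k_1}\|_{F_{k_1}}$; writing $\|u_{k_1}\|_{F_{k_1}}=2^{-k_1}(2^{k_1}\|u_{k_1}\|_{F_{k_1}})$ and using Cauchy--Schwarz against the summable $\sum_{k_1}2^{-2k_1}$ bounds this by $\|u\|_{F^1(T)}$. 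Thus this regime yields $\|u\|_{F^1(T)}\|v\|_{F^s(T)}$, and the opposite assignment of the low/high roles gives $\|u\|_{F^s(T)}\|v\|_{F^1(T)}$. The high--high$\,\to\,$high regime carries the smoothing factor $2^{-3k_3/4}$ from Proposition \ref{prop:high-high-high} and involves only $O(1)$ values of $k_1,k_2$ near $k_3$, so after writing $2^{sk_3}\|P_{k_3}(\cdot)\|_{N_{k_3}}\lesssim 2^{-3k_3/4}(2^{sk_3}\|u_{k_3}\|)\|v_{k_3}\|$ and using $\sup_{k_3}\|v_{k_3}\|\le\|v\|_{F^0}\le\|v\|_{F^s}$ it is harmless.

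The main work --- and the case that forces the threshold --- is the high--high$\,\to\,$low regime, where the inputs sit at a high frequency $K:=k_1\sim k_2$ and the output at a low frequency $l:=k_3$. Here Proposition \ref{prop:high-high-low} carries genuine frequency losses, and I would insert the regularity weights $\|u_K\|_{F_K}=2^{-K}(2^K\|u_K\|_{F_K})$, $\|v_K\|_{F_K}=2^{-sK}(2^{sK}\|v_K\|_{F_K})$ and sum by Minkowski in $l$ followed by Cauchy--Schwarz in $K$. For $l\ge1$ the bound $2^{(K-l)/2}(2^{K/2-2l}+2^{-3l/2})$ produces, after inserting the weights and multiplying by $2^{sl}$, kernels $M(l,K)$ whose $\ell^2_l$-mass decays in $K$, so that
\[
\|N\|_{N^s(T)}\ \lesssim\ \sum_{K}\Big(\sum_{l\le K-10}M(l,K)^2\Big)^{1/2}\big(2^{K}\|u_K\|_{F_K}\big)\big(2^{sK}\|v_K\|_{F_K}\big)\ \lesssim\ \|u\|_{F^1(T)}\|v\|_{F^s(T)}.
\]
The truly binding point is the zero-output piece $l=0$: Proposition \ref{prop:high-high-low}(b) gives the factor $2^{2K}$, and after the weights this leaves $\sum_K 2^{(1-s)K}(2^{K}\|u_K\|_{F_K})(2^{sK}\|v_K\|_{F_K})$, which converges by Cauchy--Schwarz precisely when $s\ge1$. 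This is exactly the high--high interaction at very low output frequency that the divergence form and the weight $\beta_{k,j}$ were designed to tame, and it explains the hypothesis $s\ge1$ in part (a).

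Finally, part (b) is the same decomposition run with the asymmetric budget $\|u\|_{F^0(T)}\|v\|_{F^2(T)}$ and output in $N^0(T)$ (no output weight). I would track carefully which factor is the rough $F^0$ one and which is the smooth $F^2$ one: in the high--low$\,\to\,$high regime, when the low factor is $v$ the gap sum $\sum_{k_1\le k_3-10}\|v_{k_1}\|_{F_{k_1}}=\sum_{k_1}2^{-2k_1}(2^{2k_1}\|v_{k_1}\|_{F_{k_1}})$ converges by Cauchy--Schwarz and leaves $\|u\|_{F^0(T)}$ from the high factor, while when the low factor is $u$ one absorbs a harmless $k_3^{1/2}$ from $\big(\sum_{k_1\le k_3}\|u_{k_1}\|_{F_{k_1}}^2\big)^{1/2}$ against the decay $2^{-2k_3}$ coming from $\|v\|_{F^2(T)}$. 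In the high--high$\,\to\,$low regime the input weight $2^{-2K}$ from $v\in F^2(T)$ now beats every loss in Proposition \ref{prop:high-high-low}, including the $2^{2K}$ at $l=0$, where $\sum_K \|u_K\|_{F_K}(2^{2K}\|v_K\|_{F_K})2^{-2K}=\sum_K\|u_K\|_{F_K}(2^{2K}\|v_K\|_{F_K})$ closes by Cauchy--Schwarz. I expect the bookkeeping in this high--high$\,\to\,$low regime, and in particular matching the frequency losses of Proposition \ref{prop:high-high-low} against the available $\ell^2$ weights at the endpoint, to be the only genuinely delicate part of the argument.
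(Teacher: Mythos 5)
Your proposal is correct and follows exactly the route the paper intends: the paper's own ``proof'' is a one-line deferral (``follows from the dyadic bilinear estimates and Young's inequality; see \cite{Guo2}''), and your writeup supplies precisely the omitted summation of Propositions \ref{prop:high-low-high}--\ref{prop:low-low-low} over the four frequency regimes. Your identification of the high--high $\to$ $k_3=0$ interaction, where the $2^{2k_1}$ loss in \eqref{eq:high-high-low2} must be absorbed by the input weights, as the step forcing $s\ge 1$ in part (a) is the right diagnosis and matches the role this case plays in the paper's design of the divergence-form nonlinearity.
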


\begin{proof}
The proof follows from the dyadic bilinear estimates and Young's
inequality. See \cite{Guo2} for a similar proof.
\end{proof}
\begin{remark}\label{re:cor1.2-bilinear}
The equation \eqref{eq:5kdv heirarchy} has an additional term $ u^2u_x$. To prove Corollary \ref{cor:gwp}, we need to show the nonlinear estimate for this cubic term. In fact, this term is much easier to handle. In view of the proof of each Lemma, one need to use $L^2$ estimate twice, and it is enough to contorl
$$\sum_{j \ge 2k}2^{j/2}\norm{\textbf{1}_{D_{k,j}}\cdot (f_{k_1,j_1}\ast f_{k_2,j_2})}_{L^2}.$$
Indeed, since there is no derivative, we get from the block estimates that
\begin{equation}\label{eq:cor1.2-bilinear}
\sum_{j \ge 2k}2^{j/2}\norm{\textbf{1}_{D_{k,j}}\cdot (f_{k_1,j_1}\ast f_{k_2,j_2})}_{L^2} \lesssim 2^{2k_1 + \frac12k}\norm{u_1}_{F_{k_1}} \norm{u_2}_{F_{}k_2},
\end{equation}
which implies
$$\norm{\px(u_1u_2u_3)}_{N^2} \lesssim \norm{u_1}_{F^2} \norm{u_2}_{F^2} \norm{u_3}_{F^2}.$$
Moreover, we also get
$$\norm{\px(u^2v)}_{N^0} \le \norm{u}_{N^0}^2\norm{v}_{N^2}.$$
\end{remark}

\section{Energy estimates}\label{sec:energy}

In this section we prove the energy estimates, following the idea in \cite{IKT}. We introduce a new Littlewood-Paley decomposition with smooth cut-offs. With
$$\chi_k(\xi)=\eta_0(\xi/2^k) - \eta_0(\xi/2^{k-1}), \quad k \in \Z.$$
Let $\widetilde{P}_k$ denote the operator on $L^2(\R)$ defined by the Fourier multiplier $\chi_k(\xi)$. Assume that $u,v \in C([-T,T];L^2)$ and
\begin{equation}\label{eq:energy}
\begin{cases}
\pt u + \px^5u = v, \quad  (x,t) \in \R \times (-T,T) \\
u(0,x) =u_0(x)
\end{cases}
\end{equation}
Then we multiply by $u$ and integrate to conclude that
\begin{equation}\label{eq:energy 1}
\sup_{|t_k| \le T}\|u(t_k)\|_{L^2}^2 \le \|u_0\|_{L^2}^2 + \sup_{|t_k| \le T}\Big{|}\int_{\R \times [0,t_k]}u \cdot v dxdt \Big{|}.
\end{equation}

\begin{lemma}\label{lem:energy}
Let $T \in (0,1]$ and $k_1,k_2,k_3 \in \Z_+$.\\
(a) Assume $|k_{max}-k_{min}| \le 5$ and $u_i \in F_{k_i}(T), i=1,2,3$. Then
 \begin{equation}\label{eq:energy 2.1}
\left| \int_{\R\times [0,T]} u_1u_2u_3 dxdt \right| \lesssim 2^{- \frac74k_{max}} \prod_{i=1}^{3} \norm{u_i}_{F_{k_i}(T)}
 \end{equation}
(b) Assume $k_{max} \ge 10$, $2^{k_{min}} \ll 2^{k_{med}} \sim 2^{k_{max}}$ and $u_i \in F_{k_i}(T), i=1,2,3$. Then
 \begin{equation}\label{eq:energy 2.2}
\left| \int_{\R\times [0,T]} u_1u_2u_3 dxdt \right| \lesssim  2^{-2k_{max}-\frac12k_{min}}\prod_{i=1}^{3} \norm{u_i}_{F_{k_i}(T)}
 \end{equation}
(c) Assume $k_1 \le k -10 $. Then
\begin{equation}\label{eq:energy 3}
\left| \int_{\R\times[0,T]} \widetilde{P}_k(u)\widetilde{P}_k(\px^3u\cdot \widetilde{P}_{k_1}v) dxdt    \right| \lesssim 2^{\frac12k_1} \norm{\widetilde{P}_{k_1}v}_{F_{k_1}(T)}\sum_{|k'-k|\le 10} \norm{\widetilde{P}_{k'}(u)}^2_{F_{k'}(T)}
\end{equation}
(d) Under the same condition as in (c), we have
\begin{equation}\label{eq:energy 9}
\left| \int_{\R\times[0,T]} \widetilde{P}_k(u)\widetilde{P}_k(\px^2u\cdot \widetilde{P}_{k_1}\px v) dxdt    \right| \lesssim 2^{\frac12k_1} \norm{\widetilde{P}_{k_1}v}_{F_{k_1}(T)}\sum_{|k'-k|\le 10} \norm{\widetilde{P}_{k'}(u)}^2_{F_{k'}(T)}
\end{equation}
\end{lemma}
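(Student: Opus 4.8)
The plan is to convert each space--time integral into the trilinear form $J$ of Section~\ref{sec:nonlinear} on a single short time interval, estimate it with the block inequalities of Lemma~\ref{lemma:block estimate}, and then sum over a time partition. First I would replace the $u_i$ by extensions of comparable $F_{k_i}$--norm, fix a smooth partition $\sum_n\gamma(2^{2k_{max}}t-n)\equiv1$ at the finest relevant scale $2^{-2k_{max}}$, and split $\mathbf 1_{[0,T]}=\mathbf 1_{[0,T]}\sum_n\gamma(2^{2k_{max}}t-n)$; since $T\le1$ only $\lesssim2^{2k_{max}}$ windows meet $[0,T]$, and on the two boundary windows the sharp cutoff is a harmless bounded factor. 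On each window I localize the inputs: the high-frequency ones by $\gamma$ at scale $2^{-2k_{max}}$, and any low-frequency input at its \emph{own} scale $2^{-2k_i}$, which is legitimate by \eqref{eq:pXk3} (its hypothesis $l\le5k$ holds with $l=2k_i$). Because $\|\cdot\|_{F_{k_i}(T)}$ is a supremum over time centers, each localized factor has $X_{k_i}$--norm $\lesssim\|u_i\|_{F_{k_i}(T)}$. By Plancherel the windowed integral becomes $J(f_1,f_2,f_3)$ after decomposing each $f_i$ into modulation blocks $f_{k_i,j_i}$, and the support relation \eqref{eq:support}, $2^{j_{max}}\sim\max(2^{j_{med}},|H|)$ with $|H|\sim|\xi_{max}|^4|\xi_{min}|$, supplies the decisive lower bound on the largest modulation.

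For part (a), comparability $|k_{max}-k_{min}|\le5$ forces $|H|\sim2^{5k_{max}}$, hence $j_{max}\gtrsim5k_{max}$, while the fine localization forces every $j_i\ge2k_{max}$. Lemma~\ref{lemma:block estimate}(a) contributes $2^{-3k_{max}/4}$, and after converting each $\|f_{k_i,j_i}\|_{L^2}$ into its weighted $X_{k_i}$--block (using $\beta_{k_i,j_i}\ge1$) the geometric sums $\sum_{j_{med}\ge2k_{max}}2^{-j_{med}/4}\sim2^{-k_{max}/2}$ and $\sum_{j_{max}\ge5k_{max}}2^{-j_{max}/2}\sim2^{-5k_{max}/2}$ give a per-window bound $\lesssim2^{-15k_{max}/4}\prod_i\|u_i\|_{F_{k_i}(T)}$; multiplying by the $\lesssim2^{2k_{max}}$ windows yields the stated $2^{-7k_{max}/4}$.

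In part (b) I localize the two high factors at scale $2^{-2k_{max}}$ and the low factor at scale $2^{-2k_{min}}$; now $|H|\sim2^{4k_{max}+k_{min}}$, so $j_{max}\gtrsim4k_{max}+k_{min}$. Applying Lemma~\ref{lemma:block estimate}(b) with the free index $i$ taken to be the factor carrying the largest modulation disposes of the case where a \emph{high} factor is most non-resonant: the per-window bound is $2^{-4k_{max}-k_{min}/2}$, hence $2^{-2k_{max}-k_{min}/2}$ after summing windows. The main obstacle is the case flagged in the introduction, where the \emph{low} frequency carries the largest modulation; here the block estimate alone is insufficient and I must keep the weight $\beta_{k_{min},j_{max}}^{-1}$ coming from the $X_{k_{min}}$--structure. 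Since $j_{max}\gtrsim4k_{max}\gg5k_{min}$ one has $\beta_{k_{min},j_{max}}^{-1}\sim2^{-(j_{max}-5k_{min})/8}$ (and $\beta_{0,j}^{-1}=2^{-j/2}$ when $k_{min}=0$); combined with the $2^{-j_{max}/2}$ from Lemma~\ref{lemma:block estimate}(b), the $j_{max}$--summation of $2^{-j_{max}/2-\left(j_{max}-5k_{min}\right)/8}$ from $j_{max}\gtrsim4k_{max}+k_{min}$ produces $2^{-5k_{max}/2}$ (the $k_{min}$--contributions cancelling), so the per-window bound is once more $2^{-4k_{max}-k_{min}/2}$ and $2^{-2k_{max}-k_{min}/2}$ after summation. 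This is exactly the role of the asymmetric weight \eqref{beta weight}.

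Parts (c) and (d) add a derivative loss. By self-adjointness of $\widetilde P_k$ the integrals equal $\int\widetilde P_k^2u\cdot\px^3u\cdot\widetilde P_{k_1}v$ and $\int\widetilde P_k^2u\cdot\px^2u\cdot\px\widetilde P_{k_1}v$, and the frequency support restricts the bare $u$--factor to $\sum_{|k'-k|\le10}\widetilde P_{k'}u$. In (d) the symbol is already $\lesssim2^{2k}2^{k_1}$, so the high--high--low estimate of part (b) applies directly and gives $2^{2k}2^{k_1}\cdot2^{-2k-k_1/2}=2^{k_1/2}$. In (c) the naive symbol $2^{3k}$ is too large, and the point is to exploit that $u$ occurs twice: integrating by parts through $\int f\,\px^3f\,g=\tfrac12\int(\px f)^2\px g-\int f\,\px^2f\,\px g$ trades one of the three derivatives onto the low-frequency factor $g=\widetilde P_{k_1}v$, lowering the symbol to $2^{2k}2^{k_1}$ and reducing again to part (b). The mismatch between $\widetilde P_k^2u$ and $\widetilde P_{k'}u$ together with the commutator of $\widetilde P_k$ and multiplication by the low-frequency $\widetilde P_{k_1}v$ produce only lower-order terms, gaining a factor $2^{k_1-k}$. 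In both parts the two high factors are $\widetilde P_ku$ and $\widetilde P_{k'}u$, and $\|\widetilde P_ku\|\,\|\widetilde P_{k'}u\|\le\tfrac12\bigl(\|\widetilde P_ku\|^2+\|\widetilde P_{k'}u\|^2\bigr)$ accounts for the sum over $k'$ on the right-hand side. The hard part throughout is the low-frequency--large-modulation interaction of part (b), which the weight \eqref{beta weight} is engineered to absorb.
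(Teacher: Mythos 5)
Your overall strategy is exactly the paper's: partition time at scale $2^{-2k_{max}}$ (with the low-frequency factor kept at its own scale), pass to the trilinear form $J$, invoke Lemma~\ref{lemma:block estimate} together with the support relation \eqref{eq:support}, and use the weight $\beta_{k,j}$ to absorb the case where the low-frequency factor carries the largest modulation; parts (a), (b) with $k_{min}\ge1$, (c) and (d) all check out numerically and coincide with the paper's computations (the paper phrases the integration by parts in (c) via $uu_{xxx}=\tfrac12(u^2)_{xxx}-\tfrac32(u_x^2)_x$ rather than your identity, but the two resulting terms are the same, and your account of the commutator gaining $2^{k_1-k}$ matches the multiplier bound on $m(\xi,\xi_1)$ in \eqref{eq:energy 8}).

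The genuine gap is part (b) when $k_{min}=0$. Your argument rests on $|H|\sim2^{4k_{max}+k_{min}}$, hence $j_{max}\gtrsim4k_{max}+k_{min}$, and on applying Lemma~\ref{lemma:block estimate}(b) with the free index $i$ equal to the factor of largest modulation. Both steps fail for the inhomogeneous block $\widetilde{I}_{\le0}$: the low frequency $|\xi_{min}|$ can be arbitrarily small, so $|H|\sim|\xi_{max}|^4|\xi_{min}|$ has no lower bound and the largest modulation need not exceed $2^{2k_{max}}$; moreover, by the Remark following Corollary~\ref{cor:block estimate}, estimate (b) is not available with $i$ equal to the zero-frequency index. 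Writing $P_{\le0}u_1=\sum_{l\le0}P_lu_1$ does not immediately rescue you, because the $X_0$ norm gives no $\ell^1$-summability of $\|P_lu_1\|_{L^2}$ over $l$; one must extract a gain in $l$ from somewhere. The paper does this by splitting $f_{1,l}$ into low- and high-modulation parts relative to $4k_2+l$, treating the low-modulation part with estimate (b) applied to a \emph{high}-frequency index, the high-modulation part with the weight $\beta_{0,j}=2^{j/2}$ combined with H\"older and Bernstein ($\|f_3\|_{L^\infty_{t,x}}\lesssim2^{l/2}\|f_3\|_{L^\infty_tL^2_x}$) in the range $-2k_2\le l\le0$, and with estimate (c) for $l\le-2k_2$. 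This sub-case is where the lemma is actually delicate and is needed in Proposition~\ref{prop:energy1} (e.g.\ the term $I_{2,3}$ includes $k_2=0$), so it cannot be waved through. A secondary, standard point: in your treatment of the boundary windows, multiplication by the sharp cutoff $\mathbf{1}_{[0,T]}$ is \emph{not} bounded on $X_k$ (the $\ell^1$ modulation summation is destroyed); the paper isolates the $\le4$ boundary windows and uses the weaker bound $\sup_j2^{j/2}\|\eta_j(\tau-\omega(\xi))f_k^I\|_{L^2}\lesssim\|f_k\|_{X_k}$, which suffices there.
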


\begin{proof}

For (a) and (b), we may assume that $k_1\le k_2\le k_3$ by symmetry.
We fix extensions $\wt{u_i} \in F_{k_i}$ so that
$\norm{\wt{u_i}}_{F_{k_i}} \le 2 \norm{u_i}_{F_{k_i}(T)}, i=1,2,3 $.
Let $\gamma : \R \to [0,1]$ be a smooth partition of unity function
(i.e. $\text{supp } \gamma \subset [-1,1] $ and $\sum_{n\in \Z}
\gamma^3(x-n) \equiv 1, x\in \R $). The left-hand side of
\eqref{eq:energy 2.1} and \eqref{eq:energy 2.2} is bounded by
\begin{equation}\label{eq:energy 4}
          C \sum_{|n| \lesssim 2^{2k_3}} \Big|\int_{\R\times\R} (\gamma(2^{2k_3}t-n)\mathbf{1}_{[0,T]}(t)\wt{u}_1)
        \cdot (\gamma(2^{2k_3}t -n)\wt{u}_2)\cdot (\gamma(2^{2k_3}t-n)\wt{u}_3) dxdt  \Big|
\end{equation}
Set $A=\{n: \gamma(2^{2k_3}t-n) \mathbf{1}_{[0,T]}(t) \mbox{
non-zero and } \neq \gamma(2^{2k_3}t-n) \}$. Then one observe $|A|
\le 4 $.

(a) First consider the summation over $n \in A^c$. let
$f_{k_i}=\ft(\gamma(2^{2k_3}t-n)\widetilde{u}_i)$ and
$f_{k_i,j_i}=\eta_{j_i}(\tau-\xi^5)f_{k_i}$, $i=1,2,3$. By
Parseval's formula and \eqref{eq:pre2}, we have
\begin{equation}\label{eq:energy 5}
\eqref{eq:energy 4}\les \sup_{n \in A^c}2^{2k_3}\sum_{j_1,j_2,j_3
\ge 0}|J(f_{k_1,j_1},f_{k_2,j_2},f_{k_3,j_3})|.
\end{equation}
By \eqref{eq:pre2} and the support properties \eqref{eq:support} we
may assume $j_1,j_2,j_3\geq 2k_3, 2^{j_{max}}\sim 2^{5k_3}$. Then
using Lemma \ref{lemma:block estimate} (a), we get that
\begin{align*}
\eqref{eq:energy 5} &\lesssim \sum_{j_1,j_2,j_3 \ge 2k_3}2^{2k_3}2^{j_{min}/2} 2^{j_{med}/4}2^{-3k_{max}/4}\prod_{i=1}^3\norm{f_{k_i,j_i}}_{L^2}\\
&\les2^{-\frac74k_{max}}\prod_{i=1}^3(2^{2k_i/2}\norm{f_{k_i,j_i}}_{L^2})\les
2^{-\frac74k_{max}}\prod_{i=1}^{3} \norm{u_i}_{F_{k_i}(T)}.
\end{align*}

For the summation over $n\in A$, it is easy to handle since $|A|\leq
4$. Indeed, we observe that if $I\subset \R$ is an interval, $k\in
\Z_+$, $f_k\in X_k$, and $f_k^I=\ft(1_I(t)\cdot \ft^{-1}(f_k))$ then
\begin{align*}
\sup_{j\in \Z_+}2^{j/2}\norm{\eta_j(\tau-\omega(\xi))\cdot
f_k^I}_{L^2}\les \norm{f_k}_{X_k}.
\end{align*}
See \cite{Guo2} for the proof.

(b) This part is a little trickier to prove. Thus, to overcome a trouble, we will use following form, which is different from \eqref{eq:energy 4}, and it suffices to prove that
\begin{align}\label{eq:energyk1=0}
\Big|\sum_{n\in A^c} \int_{\R^2}
(\eta_0(2^{2k_1}t-2^{2k_1-2k_3}n)\wt{u}_1)(\gamma^2(2^{2k_3}t
-n)\wt{u}_2)(\gamma(2^{2k_3}t-n)\wt{u}_3) dxdt \Big|\les
2^{-2k_{2}}\prod_{i=1}^{3} \norm{\tilde u_i}_{F_{k_i}}.
\end{align}
Let $f_1=\eta_0(t-2^{-2k_3}n)\wt{u}_1$ and $f_i=\gamma(2^{2k_3}t
-n)\wt{u}_i$, $i=2,3$.\\
First we consider the case $k_1\geq 1$. Similarly, we get
\eqref{eq:energy 5} and only need to consider the sum over $n\in
A^c$. By Lemma \ref{lemma:block estimate} (b), the left-hand side of \eqref{eq:energyk1=0} is dominated by
\begin{equation}\label{eq:energyk>0}
\sum_{j_1 \ge 2k_1}\sum_{j_2,j_3 \ge 2k_3}2^{2k_3}2^{(j_1+j_2+j_3)/2} 2^{-3k_{max}/2}2^{-(k_i+j_i)/2}\prod_{i=1}^3\norm{f_{k_i,j_i}}_{L^2}.\end{equation}
If $j_1 \neq j_{max}$, take $j_i=j_{max}$. Then since $2^{k_2} \sim 2^{k_3}$, \eqref{eq:support} yields
\begin{equation*}
\eqref{eq:energyk>0} \lesssim 2^{-2k_{max}-\frac12k_{min}}\prod_{i=1}^3\sum_{j_i \ge 2k_i} 2^{j_i/2}\norm{f_{k_i,j_i}}_{L^2} \lesssim \prod_{i=1}^{3}\norm{u_{i}}_{F_{k_i}}.
\end{equation*}
If $j_1=j_{max}$, take $j_i=j_1$. Then from a similar argument, we get
\begin{align*}
\eqref{eq:energyk>0} &\lesssim 2^{-\frac32k_{max}-k_{min}}2^{(k_1-k_3)/2} \sum_{j_1 \ge 4k_3+k_1-5}2^{j_1/2}\beta_{k_1,j_1}\norm{f_{k_1,j_1}}_{L^2} \prod_{i=2}^3\sum_{j_i \ge 2k_3} 2^{j_i/2}\norm{f_{k_i,j_i}}_{L^2}\\
&\lesssim \prod_{i=1}^{3}\norm{u_{i}}_{F_{k_i}},
\end{align*}
where we used $\beta_{k_1,j_1} \gtrsim 2^{(k_3-k_1)/2}$.

\noindent Next we assume $k_1=0$. In this case, we only need to consider the sum over $n \in A^c$. We decompose further the low frequency
component $f_1=\sum_{l\leq 0}f_{1,l}$ with
$f_{1,l}=\ft^{-1}1_{|\xi|\sim 2^l}\ft f_1$. Then
\begin{align*}
\mbox{LHS of }\eqref{eq:energyk1=0}\les& \sum_{l\leq
0}\Big|\sum_{n\in
A^c}\int_{\R\times\R}{f_{1,l}}\cdot {f_2}\cdot {f_3}dxdt\Big|\\
\leq&\sum_{l\leq 0}\Big|\sum_{n\in
A^c}\int_{\R\times\R}{f^H_{1,l}}\cdot {f_2}\cdot
{f_3}dxdt\Big|+\sum_{l\leq 0}\Big|\sum_{n\in
A^c}\int_{\R\times\R}{f^L_{1,l}}\cdot {f_2}\cdot
{f_3}dxdt\Big|=I+II,
\end{align*}
where $\ft (f^L_{1,l})=\eta_{\leq 4k_2+l-30}(\tau-\xi^5)\ft
f_{1,l}$, and $f^H_{1,l}=f_{1,l}-f^L_{1,l}$. The term $II$ is easy
to handle (high frequency with large modulation). Indeed,
\begin{align*}
II\les& 2^{2k_3}\sum_{l\leq 0}2^l\sum_{j_1,j_2,j_3\geq
0}|J(f_{k_1,j_1},f_{k_2,j_2},f_{k_3,j_3})|\\
\les& 2^{2k_3}\sum_{l\leq 0}2^l\sum_{j_1,j_2,j_3\geq
0}2^{(j_1+j_2+j_3)/2}2^{-2k_2}2^{-\max(j_2,j_3)/2} \prod_{i=1}^3(\norm{f_{k_i,j_i}}_{L^2})\les
2^{-2k_{2}}\prod_{i=1}^{3} \norm{\tilde u_i}_{F_{k_i}},
\end{align*}
since $\max(j_2,j_3)\geq l+4k_2-5$ by the support property.

Now we deal with the term $I$. By orthogonality, we may assume
$f_2,f_3$ has frequency support in a ball of size $2^l$.
\[I\leq(\sum_{l\leq -2k_2-1}+\sum_{-2k_2\leq l\leq 0})\Big|\sum_{n\in
A^c}\int_{\R\times\R}{f^H_{1,l}}\cdot {f_2}\cdot
{f_3}dxdt\Big|=I_1+I_2.\]
For the term $I_1$, by H\"older's inequality and using a weight, we have
\begin{align*}
I_2\leq& \sum_{-2k_2\leq l\leq 0}(\sup_{n\in
A^c}\norm{f^H_{1,l}}_{L_{t,x}^2})(\sup_{n\in
A^c}\norm{f_3}_{L_{t,x}^\infty})\sum_{n\in
A^c}\norm{f_2}_{L_{t,x}^2}\\
\leq& \sum_{-2k_2\leq l\leq 0}(\sup_{n\in A^c}\sum_{j_1\geq
l+4k_2-10}\norm{\eta_{j_1}(\tau-\omega(\xi))\ft
f_1}_{L_{\xi,\tau}^2})(2^{l/2}\norm{f_3}_{L_{t}^\infty
L_x^2})2^{k_2}\norm{f_2}_{L_t^\infty L_{x}^2}\\
\les&\sum_{-2k_2\leq l\leq
0}2^{-4k_2-l}2^{l/2}2^{k_2}\prod_{i=1}^{3} \norm{\tilde
u_i}_{F_{k_i}}\les 2^{-2k_2}\prod_{i=1}^{3}\norm{\tilde
u_i}_{F_{k_i}}.
\end{align*}
For the term $I_1$, as $II$, we get from Corollary \ref{cor:block
estimate} (c) and \eqref{eq:pre2} that
\begin{align*}
I_1\les& 2^{2k_3}\sum_{l\leq -2k_2}2^l\sum_{j_1,j_2,j_3\geq
0}|J(f_{k_1,j_1},f_{k_2,j_2},f_{k_3,j_3})|\\
\les& 2^{2k_3}\sum_{l\leq -2k_2}2^l\sum_{j_1,j_2,j_3\geq
0}2^{j_1/2}2^{l/2}\prod_{i=1}^3(\norm{f_{k_i,j_i}}_{L^2})\les
2^{-2k_{2}}\prod_{i=1}^{3} \norm{\tilde u_i}_{F_{k_i}},
\end{align*}
since by \eqref{eq:pre2} we may assume $j_2,j_3\geq 2k_2$.

(c) We denote the commutator of $T_1,T_2$ by
$[T_1,T_2]=T_1T_2-T_2T_1$. Then the left hand side of
\eqref{eq:energy 3} is dominated by
\begin{equation*}
\Big{|}\int_{\R \times [0,T]}\widetilde{P}_k(u)\widetilde{P}_k(\px^3u)\widetilde{P}_{k_1}(v)dxdt \Big{|} + \Big{|}\int_{\R \times [0,T]}\widetilde{P}_k(u)[\widetilde{P}_k,\widetilde{P}_{k_1}(v)](\px^3u)dxdt \Big{|} =: I+II.
\end{equation*}
Using $uu_{xxx}=\frac12(u^2)_{xxx}-3(u_x^2)_x$ and integration by parts, we get
\begin{align*}
I &\lesssim \Big{|}\int_{\R \times [0,T]}\px^3((\widetilde{P}_k(u))^2)\widetilde{P}_{k_1}(v)dxdt \Big{|} + \Big{|}\int_{\R \times [0,T]}\px((\widetilde{P}_k(\px u))^2)\widetilde{P}_{k_1}(v)dxdt \Big{|} \\
&\lesssim \Big{|}\int_{\R \times [0,T]}\widetilde{P}_k(u)\widetilde{P}_k(u)\widetilde{P}_{k_1}(\px^3v)dxdt \Big{|} + \Big{|}\int_{\R \times [0,T]}\widetilde{P}_k(u_x)\widetilde{P}_k(u_x)\widetilde{P}_{k_1}(\px v)dxdt \Big{|} \\
&=:I_1+I_2.
\end{align*}
Then, use \eqref{eq:energy 2.2} to conclude that
\begin{align*}
I_1+I_2 &\lesssim 2^{-2k-\frac12k_1}2^{3k_1}\|\widetilde{P}_k(u) \|_{F_k(T)}^2\|\widetilde{P}_{k_1}(v)\|_{F_{k_1}(T)}\\
&+ 2^{-2k-\frac12k_1}2^{2k}2^{k_1}\|\widetilde{P}_k(u) \|_{F_k(T)}^2\|\widetilde{P}_{k_1}(v)\|_{F_{k_1}(T)},
\end{align*}
which suffices for \eqref{eq:energy 3}.

To control $II$, we use the formula
\begin{equation}\label{eq:energy 8}
\begin{split}
\ft([\widetilde{P}_k,&\widetilde{P}_{k_1}(v)](\px^3u))(\xi,\tau)\\
&=C\int_{\R^2}\ft(\widetilde{P}_{k_1}(\px^3v))(\xi_1,\tau_1) \cdot \ft(u)(\xi-\xi_1,\tau-\tau_1)\cdot m(\xi,\xi_1)d\xi_1d\tau_1,
\end{split}
\end{equation}
where
$$|m(\xi,\xi_1)|=\Big{|}\frac{(\xi-\xi_1)^3(\chi_k(\xi)-\chi_k(\xi-\xi_1))}{\xi_1^3}\Big{|} \lesssim \sum_{|k-k'|\le 4}\chi_{k'}(\xi-\xi_1).$$
By the Parseval's theorem and \eqref{eq:energy 2.2}, we estimate
\begin{align*}
II &\lesssim
2^{-2k-\frac12k_1}2^{3k_1}\|\widetilde{P}_{k_1}(v)\|_{F_{k_1}(T)}\sum_{|k-k'|
\le 4}\|\widetilde{P}_{k'}(u) \|_{F_{k'}(T)}^2, \lesssim \text{RHS
of } \eqref{eq:energy 3}.
\end{align*}

(d) is proved similarly and so we omit the detail.
\end{proof}

\begin{proposition}\label{prop:energy1}
Let $T\in (0,1]$ and $u \in C([-T,T]:H^\infty) $ be a solution to \eqref{eq:5kdv}. Let $s  \ge 5/4$. Then we have
\begin{equation}\label{eq:energy 10}
\norm{u}^2_{E^s(T)} \lesssim \norm{u_0}^2_{H^s} + \|u\|_{F^{5/4}(T)} \norm{u}^2_{F^s(T)}
\end{equation}
\end{proposition}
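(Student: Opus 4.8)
The plan is to start from the basic energy identity \eqref{eq:energy 1}, applied frequency-by-frequency. Fix $k \ge 1$ and apply $P_k$ (or the smooth version $\widetilde{P}_k$) to the equation \eqref{eq:5kdv}; then $P_k u$ solves a linear equation of the form $\pt(P_ku) + \px^5(P_ku) = P_k(\text{nonlinearity})$, where the nonlinearity is $-c_1\px u\px^2u - c_2 u\px^3u$, which after the divergence rewriting becomes $c_1'\partial_x(\px u\px u) + c_2'\partial_x(u\px^2 u)$. Multiplying by $P_ku$ and integrating over $\R\times[0,t_k]$ gives
\begin{equation*}
\sup_{|t_k|\le T} 2^{2sk}\|P_k(u(t_k))\|_{L^2}^2 \lesssim 2^{2sk}\|P_k(u_0)\|_{L^2}^2 + 2^{2sk}\sup_{|t_k|\le T}\Big|\int_{\R\times[0,t_k]} P_k(u)\, P_k(\text{nonlin})\,dxdt\Big|.
\end{equation*}
Summing the first term over $k$ (together with the $P_{\le 0}$ piece) reproduces $\|u_0\|_{H^s}^2$, so everything reduces to bounding the cubic spacetime integrals coming from the nonlinear term.

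Next I would perform a Littlewood–Paley decomposition of each of the three factors in the cubic integral, writing $u = \sum P_{k_i}u$, and classify the frequency interactions by the sizes $2^{k_1},2^{k_2},2^{k_3}$. The frequency-localized trilinear estimates in Lemma~\ref{lem:energy} are tailored exactly for this: part (a) handles the balanced case $|k_{max}-k_{min}|\le 5$, part (b) the high-high$\to$low case $2^{k_{min}}\ll 2^{k_{med}}\sim 2^{k_{max}}$, and parts (c),(d) handle the dangerous high-low interactions where one factor has much lower frequency than the other two (with the commutator structure absorbing the derivative loss). In each regime I would extract the factor of $\|u\|_{F^{5/4}(T)}$ from the lowest-regularity slot and two factors of $\|u\|_{F^s(T)}$ from the remaining slots, so that after summing the resulting dyadic bounds against the weights $2^{2sk_3}$ one obtains $\|u\|_{F^{5/4}(T)}\|u\|_{F^s(T)}^2$. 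Concretely, in the balanced case the gain $2^{-\frac74 k_{max}}$ from \eqref{eq:energy 2.1} beats the three derivatives lost in $\px^3$ (i.e. $2^{3k_{max}}$ distributed via the divergence form), and in the high-low case the $2^{\frac12 k_1}$ loss in \eqref{eq:energy 3}, \eqref{eq:energy 9} is controlled because the low-frequency factor carries regularity $s\ge 5/4 > 1$ and because the two high-frequency factors have comparable frequency $k'\sim k$.

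The main obstacle is the high-low interaction, where a very low frequency factor multiplies two comparable high-frequency factors and three derivatives fall on a high-frequency factor. A naive estimate loses too many derivatives; the resolution is precisely the commutator decomposition used in Lemma~\ref{lem:energy}(c),(d), where one splits $\widetilde{P}_k(\px^3u\cdot\widetilde{P}_{k_1}v)$ into a ``diagonal'' term that can be symmetrized via the identity $uu_{xxx}=\frac12(u^2)_{xxx}-3(u_x^2)_x$ and a commutator term $[\widetilde{P}_k,\widetilde{P}_{k_1}v](\px^3u)$ whose symbol $m(\xi,\xi_1)$ carries a smoothing factor $\lesssim \sum_{|k-k'|\le4}\chi_{k'}(\xi-\xi_1)$, effectively removing one of the derivatives. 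The symmetrization is what makes the quadratic-in-$u$ high-frequency structure appear squared, so that one genuinely obtains $\sum_{|k'-k|\le10}\|\widetilde{P}_{k'}u\|_{F_{k'}(T)}^2$ rather than a single power; summing this against $2^{2sk}$ then yields $\|u\|_{F^s(T)}^2$, with the remaining low-frequency factor producing $\|\widetilde{P}_{k_1}v\|_{F_{k_1}(T)}$ which after summation in $k_1$ gives the $\|u\|_{F^{5/4}(T)}$ factor. The threshold $s\ge 5/4$ enters exactly here: it is the smallest regularity for which the $2^{\frac12 k_1}$ loss in the low-frequency slot, summed over $k_1\le k-10$, remains controlled by $\|u\|_{F^{5/4}(T)}$, and for which the high-frequency derivative losses balance against the gains in \eqref{eq:energy 2.1}, \eqref{eq:energy 2.2}.
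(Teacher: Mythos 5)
Your outline follows the paper's proof essentially step for step: the energy identity \eqref{eq:energy 1} applied to $\widetilde{P}_k u$, a Littlewood--Paley classification of the cubic space-time integrals, Lemma \ref{lem:energy} (a), (b) for the balanced and high-high $\to$ low regimes, and Lemma \ref{lem:energy} (c), (d) --- with the symmetrization $uu_{xxx}=\frac12(u^2)_{xxx}-3(u_x^2)_x$ and the commutator bound --- for the low-high regime. One correction, though: the threshold $s\ge 5/4$ is \emph{not} pinned down by the $2^{\frac12 k_1}$ low-frequency loss in \eqref{eq:energy 3} and \eqref{eq:energy 9}; summing that loss over $k_1\le k-10$ only requires control by $\|u\|_{F^{1/2+}(T)}$, which is exactly how the paper bounds the term $I_1$. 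The binding constraint is the balanced case: there the three derivatives cost $2^{3k_{max}}$ against the gain $2^{-\frac74 k_{max}}$ of \eqref{eq:energy 2.1}, leaving a net loss $2^{\frac54 k_{max}}$, and it is this factor (the term $I_{2,2}$ in the paper's proof) that produces the $\|u\|_{F^{5/4}(T)}$ in \eqref{eq:energy 10}.
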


\begin{proof}
From the definition we have
\begin{equation*}
\norm{u}_{E^s(T)}^2 -\norm{P_{\le 0}(u_0)}_{L^2}^2 \lesssim \sum_{k \ge 1}\sup_{t_k \in [-T,T]}2^{2sk}\norm{\widetilde{P}_k(u(t_k))}_{L^2}^2.
\end{equation*}
Then we get from \eqref{eq:energy 1}
\begin{align*}
2^{2sk}\norm{\widetilde{P}_k(u(t_k))}_{L^2}^2
-2^{2sk}\norm{\widetilde{P}_k(u_0)}_{L^2}^2
&\lesssim 2^{2sk}\Big{|}\int_{\R \times [0,t_k]}\widetilde{P}_k(u)\widetilde{P}_k(u \cdot \px^3u)dxdt\Big{|}\\
&+2^{2sk}\Big{|}\int_{\R \times [0,t_k]}\widetilde{P}_k(u)\widetilde{P}_k(u_x \cdot \px^2u)dxdt\Big{|}\\
& =: I+II.
\end{align*}
We further decompose $I$ as follows:
\begin{align*}
I &\lesssim 2^{2sk}\sum_{k_1 \le k-10}\Big{|}\int_{\R \times [0,t_k]}\widetilde{P}_k(u)\widetilde{P}_k(\widetilde{P}_{k_1}(u) \cdot \px^3u)dxdt\Big{|} \\
&+ 2^{2sk}\sum_{k_1 \ge k-9, k_2 \in \Z_+}\Big{|}\int_{\R \times [0,t_k]}\widetilde{P}_k^2(u)\widetilde{P}_{k_1}(u) \cdot \px^3\widetilde{P}_{k_2}(u)dxdt\Big{|}\\
&:=I_1 +I_2.
\end{align*}
Using \eqref{eq:energy 3} then we get that
\begin{align*}
I_1 &\lesssim 2^{2sk}\sum_{k_1 \le k-10}2^{\frac12k_1}\norm{\widetilde{P}_{k_1}(u)}_{F_{k_1}(T)}\sum_{|k-k'| \le 10 }\norm{\widetilde{P}_{k'}(u)}_{F_{k'}(T)}^2\\
&\lesssim 2^{2sk}\norm{\widetilde{P}_k(u)}_{F_k(T)}^2\norm{u}_{F^{1/2+}(T)},
\end{align*}
the last inequality comes from Cauchy-Schwarz inequality.\\
For $I_2$, using \eqref{eq:energy 2.1} and \eqref{eq:energy 2.2} then we get
\begin{align*}
I_2 &\lesssim 2^{2sk}\sum_{\substack{|k_1-k_2| \le 10 \\ k_1 \ge k+10}}2^{3k_2}2^{-2k_{max}-\frac12k_{min}} \norm{\widetilde{P}_k(u)}_{F_k(T)}\norm{\widetilde{P}_{k_1}(u)}_{F_{k_1}(T)}\norm{\widetilde{P}_{k_2}(u)}_{F_{k_2}(T)} \\
&+2^{2sk}\sum_{\substack{|k_1-k| \le 10 \\ |k_2 -k|\le10}}2^{3k_2}2^{-\frac74k_{max}}\norm{\widetilde{P}_k(u)}_{F_k(T)}\norm{\widetilde{P}_{k_1}(u)}_{F_{k_1}(T)}\norm{\widetilde{P}_{k_2}(u)}_{F_{k_2}(T)}\\
&+2^{2sk}\sum_{\substack{|k_1-k| \le 10 \\ k_2 \le k-10}}2^{3k_2}2^{-2k_{max}-\frac12k_{min}} \norm{\widetilde{P}_k(u)}_{F_k(T)}\norm{\widetilde{P}_{k_1}(u)}_{F_{k_1}(T)}\norm{\widetilde{P}_{k_2}(u)}_{F_{k_2}(T)}\\
&=:I_{2,1}+I_{2,2}+I_{2,3}.
\end{align*}
For $I_{2,1}$, since $k \le k_1$, by the Cauchy-Schwarz inequality
\begin{align*}
I_{2,1} &\lesssim 2^{sk}\sum_{\substack{|k_1-k_2| \le 10 \\ k_1 \ge k+10}}2^{sk_1}2^{(
1+\del)k_2}2^{-\del k_1/2}2^{-\del k_2/2}2^{-k/2}\norm{\widetilde{P}_k(u)}_{F_k(T)}\norm{\widetilde{P}_{k_1}(u)}_{F_{k_1}(T)}\norm{\widetilde{P}_{k_2}(u)}_{F_{k_2}(T)} \\
&\lesssim 2^{sk}2^{-(1/2+\del) k} \norm{u}_{F^s(T)}\norm{u}_{F^{1+}(T)}\norm{\widetilde{P}_k(u)}_{F_k(T)}.
\end{align*}
Similarly as above, we obtain
$$I_{2,2} \lesssim 2^{2sk}\norm{u}_{F^{5/4}(T)}\sum_{|k'-k| \le 10}\norm{\widetilde{P}_k'(u)}_{F_k'(T)}^2,$$
and
$$I_{2,3} \lesssim 2^{sk}2^{-\del k/2} \norm{u}_{F^s(T)}\norm{u}_{F^{1/2+}(T)}\norm{\widetilde{P}_k(u)}_{F_k(T)},$$
which implies that the summation on $k$ of $I$ is bounded by $\norm{u}_{F^{5/4}(T)}\norm{u}_{F^s(T)}^2$.\\
For $II$, using the same method as $I$ and \eqref{eq:energy 9}, we have
$$\sum_{k \ge 1}II \lesssim \norm{u}_{F^{5/4}(T)}\norm{u}_{F^s(T)}^2.$$
Therefore, we complete the proof of the proposition.
\end{proof}
\begin{remark}\label{re:cor1.2-energy}
To get the energy estimates for trilinear term $u^2\px u$ in \eqref{eq:5kdv heirarchy}, from \eqref{eq:energy 1}, we need to control
\begin{align*}
\sum_{k \ge 1}2^{4k}\Big| \int_{\R \times [0,t_k]} \wt{P}_k(u)&\wt{P}_k(u^2 \px u)dxdt\Big| \lesssim \sum_{k \ge 1}2^{4k} \sum_{k_1 \le k-10}\Big| \int_{\R \times [0,t_k]} \wt{P}_k(u)\wt{P}_k(\wt{P}_{k_1}(u^2) \px u)dxdt\Big| \\
&+\sum_{k \ge 1}2^{4k} \sum_{k_1 \ge k-9}\sum_{k_2 \ge 1}\Big| \int_{\R \times [0,t_k]} \wt{P}_k(u)\wt{P}_{k_1}(u^2) \wt{P}_{k_2}(\px u)dxdt\Big| =: I + II.
\end{align*}
In view of the proof of Lemma \ref{lem:energy} and Proposition \ref{prop:energy1}, it is not difficult to obtain
$$I+II \lesssim \norm{u}_{F^2}^4,$$
using \eqref{eq:cor1.2-bilinear}.
\end{remark}

\begin{proposition}\label{prop:energy2}
  Assume $s \ge 2$. Let $u,v \in F^{s}(1)$ be solutions to \eqref{eq:5kdv} with small initial data $u_0,v_0 \in H^{\infty}$ in the sense of
  $$ \norm{u_0}_{H^s} + \norm{v_0}_{H^s} \le \epsilon \ll 1 $$
  Then we have
  \begin{equation}\label{eq:energy 11}
    \norm{u-v}_{F^0(1)} \lesssim \norm{u_0-v_0}_{L^2}
  \end{equation}
and
  \begin{equation}\label{eq:energy 12}
  \norm{u-v}_{F^s(1)} \lesssim \norm{u_0-v_0}_{H^s} + \norm{u_0}_{H^{2s}}\norm{u_0-v_0}_{L^2}.
  \end{equation}
\end{proposition}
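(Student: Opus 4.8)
The plan is to set $w=u-v$ and $z=u+v$ and to run the same three-step scheme (linear estimate, bilinear estimate, energy estimate) that underlies Theorem \ref{thm:main}, now applied to the equation satisfied by the difference. Subtracting the two copies of \eqref{eq:5kdv} and using the divergence form $c_1\px u\px^2u+c_2u\px^3u=c_1'\px(\px u\px u)+c_2'\px(u\px^2u)$ together with the elementary identities $(\px u)^2-(\px v)^2=\px w\,\px z$ and $u\px^2u-v\px^2v=\tfrac12\brk{z\px^2w+w\px^2z}$, one checks that $w$ solves
\begin{equation*}
\pt w+\px^5w+c_1'\px(\px w\,\px z)+\tfrac{c_2'}{2}\px\brk{z\px^2w+w\px^2z}=0,\qquad w(0)=u_0-v_0.
\end{equation*}
Thus the nonlinearity $\mathcal N(w)$ is \emph{bilinear} in $w$ and $z$, of exactly the three types $\px(\px\cdot\px\cdot)$, $\px(\cdot\px^2\cdot)$ appearing in Proposition \ref{prop:bilinear}. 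I will freely use that, for small data, the a priori theory behind Theorem \ref{thm:main} (i.e. Proposition \ref{prop:energy1} bootstrapped with the bilinear and linear estimates) gives $\norm{u}_{F^s(1)}+\norm{v}_{F^s(1)}\les\epsilon$, and that for smooth data $\norm{z}_{F^\sigma(1)}\les\norm{u_0}_{H^\sigma}+\norm{v_0}_{H^\sigma}$ for the relevant $\sigma$.

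First I would prove the $L^2$ bound \eqref{eq:energy 11}. By the standard short-time linear estimate of the method (relating the $F^0$, $E^0$ and $N^0$ norms, cf. \cite{IKT,Guo2}),
\begin{equation*}
\norm{w}_{F^0(1)}\les \norm{w}_{E^0(1)}+\norm{\mathcal N(w)}_{N^0(1)}.
\end{equation*}
Proposition \ref{prop:bilinear}(b) applied with $w\in F^0$ and $z\in F^2$ gives $\norm{\mathcal N(w)}_{N^0(1)}\les\norm{w}_{F^0(1)}\norm{z}_{F^2(1)}\les\epsilon\,\norm{w}_{F^0(1)}$. The energy estimate for the difference, which is the exact analogue of Proposition \ref{prop:energy1} at regularity $0$ (its nonlinear input being trilinear in $w,w,z$ and estimated by Lemma \ref{lem:energy}), yields $\norm{w}_{E^0(1)}^2\les\norm{u_0-v_0}_{L^2}^2+\epsilon\,\norm{w}_{F^0(1)}^2$. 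Combining the three displays and choosing $\epsilon$ small enough to absorb the $\epsilon\,\norm{w}_{F^0(1)}$ contributions into the left-hand side proves \eqref{eq:energy 11}.

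For \eqref{eq:energy 12} I would repeat the scheme at regularity $s$. The linear estimate gives $\norm{w}_{F^s(1)}\les\norm{w}_{E^s(1)}+\norm{\mathcal N(w)}_{N^s(1)}$, and Proposition \ref{prop:bilinear}(a) (using $s\ge 1$) bounds $\norm{\mathcal N(w)}_{N^s(1)}\les\norm{w}_{F^1}\norm{z}_{F^s}+\norm{w}_{F^s}\norm{z}_{F^1}\les\epsilon\,\norm{w}_{F^s(1)}$, again absorbable. Everything therefore reduces to the energy estimate for $w$ at regularity $s$. Running the decomposition of Proposition \ref{prop:energy1} on the trilinear form $\sum_k 2^{2sk}\int \wt P_k(w)\wt P_k(\mathcal N(w))$ (two factors of $w$, one of $z$), the generic interactions reproduce a harmless $\norm{z}_{F^s(1)}\norm{w}_{F^s(1)}^2\les\epsilon\,\norm{w}_{F^s(1)}^2$. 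The one dangerous term is the high-high $\Rightarrow$ low interaction, estimated by Lemma \ref{lem:energy}(b): here the two high inputs are $z$ and one copy of $w$, and balancing the five derivatives of the resonance $H$ forces up to $2s$ derivatives onto $z$ while allowing the other copy of $w$ to be measured only in $F^0$; this produces a term $\les \norm{z}_{F^{2s}(1)}\norm{w}_{F^0(1)}\norm{w}_{F^s(1)}$. Inserting $\norm{z}_{F^{2s}(1)}\les\norm{u_0}_{H^{2s}}$ (up to the symmetric $v_0$ term) and the already-proven bound $\norm{w}_{F^0(1)}\les\norm{u_0-v_0}_{L^2}$ from \eqref{eq:energy 11}, I arrive at
\begin{equation*}
\norm{w}_{F^s(1)}^2\les\norm{u_0-v_0}_{H^s}^2+\epsilon\,\norm{w}_{F^s(1)}^2+\norm{u_0}_{H^{2s}}\norm{u_0-v_0}_{L^2}\,\norm{w}_{F^s(1)},
\end{equation*}
and absorbing the $\epsilon$ term and applying Young's inequality to the last product gives \eqref{eq:energy 12}.

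I expect the main obstacle to be precisely this last energy estimate at regularity $s$: controlling the asymmetric high-high $\Rightarrow$ low interaction and verifying that the derivative count closes with exactly $2s$ derivatives on $z$ (so that $\norm{u_0}_{H^{2s}}$, and no higher norm, appears). This is where the weight $\beta_{k,j}$ and the divergence structure must be used, just as in Lemma \ref{lem:energy}(b) and Proposition \ref{prop:high-high-low}; the remaining interactions and the two absorption/Young steps are routine.
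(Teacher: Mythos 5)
Your overall scheme is the same as the paper's: write the equation for $w=u-v$, run the linear estimate (Proposition \ref{prop:linear}) plus the bilinear estimate (Proposition \ref{prop:bilinear}) to reduce everything to an energy estimate for $w$, prove the $F^0$ bound first, and then feed it into the $F^s$ energy estimate, closing with absorption and Young's inequality. The final inequality you display for \eqref{eq:energy 12} is exactly the one the paper arrives at. Two remarks on organization: the paper writes the difference nonlinearity asymmetrically as $c_2'\px(v\px^2w+w\px^2u)$ rather than in terms of $z=u+v$, which is why only $\norm{u_0}_{H^{2s}}$ (and not $\norm{v_0}_{H^{2s}}$) appears in \eqref{eq:energy 12}; your symmetric form would give $(\norm{u_0}_{H^{2s}}+\norm{v_0}_{H^{2s}})\norm{u_0-v_0}_{L^2}$, which is harmless for the application but not literally the stated bound. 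Also, to distribute the $s$ derivatives rigorously the paper does not work with the weight $2^{2sk}$ directly but applies $\La^s$ to the equation, writes an equation for $U=P_{\ge -10}(\La^s w)$, and splits the nonlinearity into commutators $[\La^s,w]\px^3u$, $[\La^s,v]\px^3w$, etc., plus the untouched terms $w\La^s\px^3u$ and $v\La^s\px^3w$; the latter are handled by Lemma \ref{lem:energy}(c) exactly as in Proposition \ref{prop:energy1}.

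There is, however, one genuine error in your account of where the $\norm{u_0}_{H^{2s}}$ loss comes from. You attribute it to the high-high $\Rightarrow$ low interaction and to ``balancing the five derivatives of the resonance $H$.'' In fact the high-high $\Rightarrow$ low contribution is harmless at level $s$: when the output frequency $k$ is the smallest, the weight $2^{2sk}$ is small and Lemma \ref{lem:energy}(b) closes with all factors in $F^s$. The term that actually produces $\norm{u}_{F^{2s}}\norm{w}_{F^0}$ is the \emph{low-high $\Rightarrow$ high} interaction inside $w\La^s\px^3u$ (the paper's term $II_4$): there $w$ sits at low frequency $k_1\le k-10$ and all $s+3$ derivatives fall on $u$ at frequency $k_2\sim k$, so the coefficient $2^{-2k_{max}-\frac12 k_{min}}2^{(s+3)k_2}\sim 2^{(s+1)k_2}2^{-\frac12 k_1}$ forces $u$ into (roughly) $F^{s+1+}\subset F^{2s}$ while the low-frequency $w$ can only be measured in $F^0$, which is precisely where \eqref{eq:energy 11} is used. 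Your claimed bound $\norm{z}_{F^{2s}}\norm{w}_{F^0}\norm{w}_{F^s}$ is of the correct form, so the bookkeeping at the end is right, but the mechanism and the frequency regime you describe are not the ones that generate it; if you wrote the proof out as stated you would find the interaction you flagged closes without loss and a different one does not.
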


\begin{proof}
We prove first \eqref{eq:energy 11}. Since $\norm{u_0}_{H^s}+\norm{v_0}_{H^s}\le \epsilon \ll 1$, we assume from \eqref{eq:preresult} that
\begin{equation}\label{eq:energy 13}
\norm{u}_{F^s(1)} \ll 1, \norm{u}_{F^s(1)} \ll 1
\end{equation}
Let $w=u-v$, then $w$ solves the equation
\begin{equation}\label{eq:energy 14}
    \begin{cases}
\pt w + \px^5w + c_1'\px(\px w\px(u+v)) + c_2'\px(v\px^2w+w\px^2u)  \\
w(0,x) =w_0(x)=u_0(x)-v_0(x)
    \end{cases}
\end{equation}
From the linear and bilinear estimates, we obtain
\begin{equation}\label{eq:energy 15}
\begin{cases}
\norm{w}_{F^0(1)} \lesssim \norm{w}_{E^0(1)}+\norm{\px(\px w\px(u+v))}_{N^0(1)}+\norm{\px(v\px^2w+w\px^2u)}_{N^0(1)}\\
\norm{\px(\px w\px(u+v))}_{N^0(1)}+\norm{\px(v\px^2w+w\px^2u)}_{N^0(1)}  \lesssim \norm{w}_{F^0(1)}(\norm{u}_{F^s(1)}+ \norm{v}_{F^s(1)}).
\end{cases}
\end{equation}
We now devote to derive the estimate on $\norm{w}_{E^0(1)}$. From
$$\norm{w}_{E^0(1)}^2 - \norm{w_0}_{L^2}^2 \lesssim \sum_{k \ge 1} \sup_{t_k}\norm{w(t_k)}_{L^2}^2$$
and  \eqref{eq:energy 1}, we need to control
\begin{equation}\label{eq:energy 0}
\begin{split}
\sum_{k\ge1}\norm{w(t_k)}_{L^2}^2 &\lesssim \sum_{k\ge1}\Big{|}\int_{\R \times [0,t_k]}\widetilde{P}_k(w)\widetilde{P}_k(w\px^3u)dxdt\Big{|}\\
&+ \sum_{k\ge1}\Big{|}\int_{\R \times [0,t_k]}\widetilde{P}_k(w)\widetilde{P}_k(v\px^3w)dxdt\Big{|}\\
&+ \sum_{k\ge1}\Big{|}\int_{\R \times [0,t_k]}\widetilde{P}_k(w)\widetilde{P}_k(\px w\px^2u)dxdt\Big{|}\\
&+ \sum_{k\ge1}\Big{|}\int_{\R \times [0,t_k]}\widetilde{P}_k(w)\widetilde{P}_k(\px v\px^2w)dxdt\Big{|}\\
&=:I+II+III+IV.
\end{split}
\end{equation}
Using \eqref{eq:energy 2.1} and \eqref{eq:energy 2.2}, $I$ is bounded by
\begin{equation}\label{eq:energy 16}
\begin{split}
&\sum_{k \ge 1}\sum_{|k-k_1|\le5}\sum_{k_2 \le k-10} 2^{-2k_{max}-\frac12k_{min}}2^{3k_2}\norm{w}_{F_k(1)}\norm{w}_{F_{k_1}(1)}\norm{u}_{F_{k_2}(1)}\\
+&\sum_{k \ge 1}\sum_{|k-k_2|\le5}\sum_{k_1 \le k-10} 2^{-2k_{max}-\frac12k_{min}}2^{3k_2}\norm{w}_{F_k(1)}\norm{w}_{F_{k_1}(1)}\norm{u}_{F_{k_2}(1)}\\
+&\sum_{k \ge 1}\sum_{|k-k_2|\le5}\sum_{|k-k_1|\le 5} 2^{-\frac74k_{max}}2^{3k_2}\norm{w}_{F_k(1)}\norm{w}_{F_{k_1}(1)}\norm{u}_{F_{k_2}(1)}\\
+&\sum_{k \ge 1}\sum_{k \le k_1 -10}\sum_{|k_1-k_2|\le5} 2^{-2k_{max}-\frac12k_{min}}2^{3k_2}\norm{w}_{F_k(1)}\norm{w}_{F_{k_1}(1)}\norm{u}_{F_{k_2}(1)}
\end{split}
\end{equation}
For the first part of \eqref{eq:energy 16}, since $2^{k_{max}} \gtrsim 2^{k_2}$, using the Cauchy-Schwarz inequality, we have
$$\sum_{k \ge 1} \norm{w}_{F_k(1)}^2\norm{u}_{F^{1/2+}(1)}\Big{(}\sum_{k_2 \ge 0}2^{-2\del k_2}\Big{)}^{1/2} \lesssim \norm{w}_{F^0(1)}^2\norm{u}_{F^s(1)}.$$
For the last term of \eqref{eq:energy 16}, since $k+10 \le k_1, k_2 $, by the Cauchy-Schwarz inequality we have
$$\norm{u}_{F^{1+}(1)}\sum_{k,k_1 \ge 1}2^{- \frac12k}\norm{w}_{F_k(1)}2^{-\del k_1}\norm{w}_{F_{k_1}(1)} \lesssim \norm{w}_{F^0(1)}^2\norm{u}_{F^s(1)}. $$
For the second and the third parts of \eqref{eq:energy 16}, similarly as above we get
$$\sum_{k_1 \ge 1}2^{-\del k_1}\norm{w}_{F_{k_1}(1)}\norm{w}_{F^0(1)}\norm{u}_{F^{1+}(1)} \lesssim \norm{w}_{F^0(1)}^2\norm{u}_{F^s(1)},$$
and
\begin{equation}\label{eq:s=5/4}
\sum_{k \ge 1}\sum_{k-5 \le k_1, k_2 \le k+5}2^{\frac54k_2}\norm{w}_{F_{k_1}(1)}\norm{w}_{F_k(1)}\norm{u}_{F_{k_2}(1)} \lesssim \norm{w}_{F^0(1)}^2\norm{u}_{F^s(1)}.
\end{equation}
For $II$, using Lemma \ref{lem:energy} again, $II$ is dominated by
\begin{equation}\label{eq:energy 17}
\begin{split}
&\sum_{k\ge 1}\sum_{k_1 \le k-10}2^{k_1/2}\norm{v}_{F_{k_1}(1)}\norm{w}_{F_k(1)}^2\\
+&\sum_{k\ge1}\sum_{|k_1-k| \le 5}\sum_{k_2 \le k-10}2^{3k_2}2^{-2k_{max}-\frac12k_{min}} \norm{w}_{F_k(1)}\norm{v}_{F_{k_1}(1)}\norm{w}_{F_{k_2}(1)}\\
+&\sum_{k\ge1}\sum_{|k_1-k_2| \le 5}\sum_{k\le k_1-10}2^{3k_2}2^{-2k_{max}-\frac12k_{min}} \norm{w}_{F_k(1)}\norm{v}_{F_{k_1}(1)}\norm{w}_{F_{k_2}(1)}\\
+&\sum_{k\ge1}\sum_{|k-k_1| \le 5}\sum_{|k-k_2| \le 5}2^{3k_2}2^{-\frac74k_{max}} \norm{w}_{F_k(1)}\norm{v}_{F_{k_1}(1)}\norm{w}_{F_{k_2}(1)}.
\end{split}
\end{equation}
From the Cauchy-Schwarz inequality, the bound of the first term of \eqref{eq:energy 17} is easily obtained. For the second and third terms of \eqref{eq:energy 17}, since $2^{k_2} \lesssim 2^{k_1}$ and $2^{k_1}\sim 2^{k_{max}} \sim 2^{k_{med}}$, it is bounded by
\begin{align*}
&\sum_{k\ge1}\sum_{k_1\ge 0}\sum_{k_2 \ge 0}2^{-\del k_1}2^{(1+\del)k_1}2^{-\frac12k_{min}}\norm{v}_{F_{k_1}(1)}\norm{w}_{F_{k_2}(1)}\norm{w}_{F_k(1)} \\
&\lesssim \norm{v}_{F^{1+}(1)}\sum_{k\ge1}\sum_{k_2 \ge 0}2^{- \delta k/2}2^{-\del k_2/2}\norm{w}_{F_{k_2}(1)}\norm{w}_{F_k(1)} \\
&\lesssim \norm{w}_{F^0(1)}^2\norm{v}_{F^{s}(1)}.
\end{align*}
The estimate of the rest term is similar to \eqref{eq:s=5/4}.\\
Similarly to $I,II$, we can get
$$III+IV \lesssim \norm{w}_{F^0(1)}^2(\norm{u}_{F^s(1)}+\norm{v}_{F^s(1)}).$$
Therefore, we obtain the following estimate
$$\norm{w}_{E^0(1)}^2 \lesssim \norm{w_0}_{L^2}^2 + \norm{w}_{F^0(1)}^2(\norm{u}_{F^s(1)}+\norm{v}_{F^s(1)}),$$
hence, combined with \eqref{eq:energy 15} and \eqref{eq:energy 13} we obtain \eqref{eq:energy 11}. \\
Now we prove \eqref{eq:energy 12}. From the linear and bilinear estimates, we obtain
\begin{equation}\label{eq:energy 18}
\begin{cases}
\norm{w}_{F^s(1)} \lesssim \norm{w}_{E^s(1)}+\norm{\px(\px w\px(u+v))}_{N^s(1)}+\norm{\px(v\px^2w+w\px^2u)}_{N^s(1)}\\
\norm{\px(\px w\px(u+v))}_{N^s(1)}+\norm{\px(v\px^2w+w\px^2u)}_{N^0(1)}  \lesssim \norm{w}_{F^s(1)}(\norm{u}_{F^s(1)}+ \norm{v}_{F^s(1)}).
\end{cases}
\end{equation}
Since $\norm{P_{\le0}(w)}_{E^s(1)}=\norm{P_{\le0}(w_0)}_{L^2}$, it follows from \eqref{eq:energy 18} and \eqref{eq:energy 13} that
\begin{equation}\label{eq:energy 19}
\norm{w}_{F^s(1)} \lesssim \norm{w_0}_{H^s} + \norm{P_{\ge 1}(w)}_{E^s(1)}.
\end{equation}
To bound $\norm{P_{\ge 1}(w)}_{E^s(1)}$, we observe that
$$\norm{P_{\ge 1}(w)}_{E^s(1)}=\norm{P_{\ge 1}(\La^s w)}_{E^0(1)},$$
where $\La^s$ is the Fourier multiplier operator with the symbol $|\xi|^s$. Thus we apply the operator $\La^s$ on both side of the \eqref{eq:energy 14} and get
\begin{align*}
\pt\La^sw + \px^5\La^sw = &-c_1\La^sw\px^3u + -c_1\La^sv\px^3w\\
&-c_2\La^s\px w\px^2u + -c_2\La^s\px v\px^2w.
\end{align*}
We rewrite the nonlinearity in the following way:
\begin{align*}
&c_1[\La^s,w]\px^3u+c_1w\La^s\px^3u+c_1[\La^s,v]\px^3w+c_1v\La^s\px^3w\\
&c_2[\La^s,\px w]\px^2u+c_2\px w\La^s\px^2u+c_2[\La^s,\px v]\px^2w+c_2\px v\La^s\px^2w.
\end{align*}
We write the equation for $U=P_{\ge-10}(\La^sw)$ in the form
\begin{equation}\label{eq:energy 20}
\begin{cases}
\pt U + \px^5U = P_{\ge-10}(-c_1v\px^3)+P_{\ge-10}(-c_2\px v\px^2U)+P_{\ge-10}(G)+P_{\ge-10}(H)\\
U(0)=P_{\ge-10}(\La^sw_0)
\end{cases}
\end{equation}
where
\begin{align*}
G=&-c_1P_{\ge-10}(v)\La^s\px^3P_{\le-11}(w)-c_1P_{\le-11}(v)\La^s\px^3P_{\le-11}(w)\\
&-c_1[\La^s,w]\px^3u-c_1[\La^s,v]\px^3w -c_1w\La^s\px^3u,
\end{align*}
and
\begin{align*}
H=&-c_2P_{\ge-10}(\px v)\La^s\px^2P_{\le-11}(w)-c_2P_{\le-11}(\px v)\La^s\px^2P_{\le-11}(w)\\
&-c_2[\La^s,\px w]\px^2u-c_2[\La^s,\px v]\px^2w -c_2\px w\La^s\px^2u.
\end{align*}
It follows from \eqref{eq:energy 1} and \eqref{eq:energy 20} that
\begin{align*}
\norm{U}_{E^0(1)}^2-\norm{w_0}_{H^s}^2 &\lesssim \sum_{k\ge1}\Big{|}\int_{\R \times [0,t_k]}\widetilde{P}_k(U)\widetilde{P}_k(v\px^3U)dxdt\Big{|}\\
&+\sum_{k\ge1}\Big{|}\int_{\R \times [0,t_k]}\widetilde{P}_k(U)\widetilde{P}_k(G)dxdt\Big{|}\\
&+\sum_{k\ge1}\Big{|}\int_{\R \times [0,t_k]}\widetilde{P}_k(U)\widetilde{P}_k(\px v\px^2U)dxdt\Big{|}\\
&+\sum_{k\ge1}\Big{|}\int_{\R \times [0,t_k]}\widetilde{P}_k(U)\widetilde{P}_k(H)dxdt\Big{|}\\
&=:I+II+III+IV.
\end{align*}
First, consider $I$ and $III$. We can bound $I,III$ as in \eqref{eq:energy 0} and get that
$$I+III \lesssim \norm{U}_{F^0(1)}^2\norm{v}_{F^s(1)}.$$
For $II$, we estimate
\begin{align*}
II &\lesssim \sum_{k\ge1}\Big{|}\int_{\R \times [0,t_k]}\widetilde{P}_k(U)\widetilde{P}_k(P_{\ge-10}(v)\La^s\px^3P_{\le-11}(w))dxdt\Big{|}\\
&+ \sum_{k\ge1}\Big{|}\int_{\R \times [0,t_k]}\widetilde{P}_k(U)\widetilde{P}_k([\La^s,w]\px^3u)dxdt\Big{|}\\
&+ \sum_{k\ge1}\Big{|}\int_{\R \times [0,t_k]}\widetilde{P}_k(U)\widetilde{P}_k([\La^s,v]\px^3w)dxdt\Big{|}\\
&+ \sum_{k\ge1}\Big{|}\int_{\R \times [0,t_k]}\widetilde{P}_k(U)\widetilde{P}_k(w\La^s\px^3u)dxdt\Big{|}\\
&=:II_1+II_2+II_3+II_4.
\end{align*}
For $II_1$, since the derivatives fall on the law frequency, we get
\begin{align*}
II_1 &\lesssim \sum_{|k-k_1| \le 5}\sum_{k_2 \le k-10}2^{-2k_{max}-\frac12k_{min}}\norm{U}_{F_k(1)}\norm{v}_{F_{k_1}(1)}\norm{\La^sw}_{F_{k_2}(1)}\\
&+ \sum_{|k-k_2| \le 5}\sum_{k_1 \le k-10}2^{-2k_{max}-\frac12k_{min}}\norm{U}_{F_k(1)}\norm{v}_{F_{k_1}(1)}\norm{\La^sw}_{F_{k_2}(1)}\\
&+ \sum_{|k_1-k_2| \le 5}\sum_{k \le k_2-10}2^{-2k_{max}-\frac12k_{min}}\norm{U}_{F_k(1)}\norm{v}_{F_{k_1}(1)}\norm{\La^sw}_{F_{k_2}(1)}\\
&+ \sum_{|k-k_1| \le 5}\sum_{|k-k_2| \le 5}2^{-\frac74k_{max}}\norm{U}_{F_k(1)}\norm{v}_{F_{k_1}(1)}\norm{\La^sw}_{F_{k_2}(1)}\\
&\lesssim \norm{U}_{F^0(1)}^2\norm{v}_{F^s(1)},
\end{align*}
which comes from Lemma \ref{lem:energy} and the Cauchy-Schwarz inequality.\\
We consider now $II_4$. Using Lemma \ref{lem:energy} again,
\begin{align*}
II_4 &\lesssim \sum_{k\ge1}\sum_{k_1,k_2 \ge0}\Big{|}\int_{\R \times [0,t_k]}\widetilde{P}_k(U)\widetilde{P}_{k_1}(w)\La^s\px^3\widetilde{P}_{k_2}(u)dxdt\Big{|} \\
&\lesssim \sum_{|k-k_2|\le5}\sum_{k_1 \le k-10}2^{-2k_{max}-\frac12k_{min}} 2^{(s+3)k_2}\norm{\widetilde{P}_kU}_{F_k(1)}\norm{\widetilde{P}_{k_1}(w)}_{F_{k_1}(1)}\norm{\widetilde{P}_{k_2}(u)}_{F_{k_2}(1)}\\
&+ \sum_{|k-k_1|\le5}\sum_{k_2 \le k-10}2^{-2k_{max}-\frac12k_{min}} 2^{(s+3)k_2}\norm{\widetilde{P}_kU}_{F_k(1)}\norm{\widetilde{P}_{k_1}(w)}_{F_{k_1}(1)}\norm{\widetilde{P}_{k_2}(u)}_{F_{k_2}(1)}\\
&+ \sum_{|k_1-k_2|\le5}\sum_{k \le k_-10}2^{-2k_{max}-\frac12k_{min}} 2^{(s+3)k_2}\norm{\widetilde{P}_kU}_{F_k(1)}\norm{\widetilde{P}_{k_1}(w)}_{F_{k_1}(1)}\norm{\widetilde{P}_{k_2}(u)}_{F_{k_2}(1)}\\
&+ \sum_{|k-k_1|\le5}\sum_{|k-k_2|\le5}2^{-\frac74k_{max}}2^{(s+3)k_2}\norm{\widetilde{P}_kU}_{F_k(1)}\norm{\widetilde{P}_{k_1}(w)}_{F_{k_1}(1)}\norm{\widetilde{P}_{k_2}(u)}_{F_{k_2}(1)}.
\end{align*}
For the first term above, since $2^{-2k_{max}-\frac12k_{min}}2^{(3+s)k_2} \lesssim 2^{(1+s+\del)k_2}2^{-\del k_2/2}2^{-\del k/2}2^{-\frac12k_1}$, we have the bound
$$\norm{U}_{F^0(1)} \norm{w}_{F^0(1)}\norm{u}_{F^{2s}(1)}.$$
Otherwise, as $2^{k_2} \lesssim 2^{k_1}$ implies
\begin{equation}\label{eq:estimate1}
2^{-2k_{max}-\frac12k_{min}}2^{(3+s)k_2} \lesssim 2^{sk_1}2^{5k_2/4}2^{-k_{max}/4}
\end{equation}
or
\begin{equation}\label{eq:estimate2}
2^{-\frac74k_{max}}2^{(s+3)k_2} \lesssim 2^{sk_1}2^{5/4k_2},
\end{equation}
the last terms above are bounded by
$$ \norm{U}_{F^0(1)}^2\norm{u}_{F^s(1)}.$$
\eqref{eq:estimate1} and \eqref{eq:estimate2} are similarly used in later inequality repeatedly.
For $II_3$, we estimate
\begin{align*}
II_3 &\lesssim \sum_{k\ge1}\sum_{k_1 \le k_2-10}\Big{|}\int_{\R \times [0,t_k]}\widetilde{P}_k(U)\widetilde{P}_k([\La^s,\widetilde{P}_{k_1}(v)]\px^3\widetilde{P}_{k_2}(w))dxdt\Big{|}\\
&+\sum_{k\ge1}\sum_{k_1 \ge k_2-9}\Big{|}\int_{\R \times [0,t_k]}\widetilde{P}_k(U)\widetilde{P}_k([\La^s,\widetilde{P}_{k_1}(v)]\px^3\widetilde{P}_{k_2}(w))dxdt\Big{|}\\
&=:II_{3,1}+II_{3,2}.
\end{align*}
We note that in the term $II_{3,2}$ the component $v$ can spare derivative and from a similar way to $II_4$ we get
\begin{align*}
II_{3,2}&\lesssim \sum_{k\ge1}\sum_{k_1\ge k_2-9}\Big{|}\int_{\R \times [0,t_k]}\La^s\widetilde{P}_k(U)\widetilde{P}_{k_1}(v)\px^3\widetilde{P}_{k_2}(w)dxdt\Big{|}\\
&+ \sum_{k\ge1}\sum_{k_1\ge k_2-9}\Big{|}\int_{\R \times [0,t_k]}\widetilde{P}_k(U)\widetilde{P}_{k_1}(v)\La^s\px^3\widetilde{P}_{k_2}(w)dxdt\Big{|}\\
&\lesssim \norm{U}_{F^0(1)}^2\norm{v}_{F^s(1)},
\end{align*}
where we used $2^{k_1} \sim 2^k \sim 2^{k_{max}}$ and a similar argument to \eqref{eq:estimate1}.\\
For $II_{3,1}$, we need to exploit the cancellation of the commutator. By taking $\gamma$ and extending $U,v,w$ as in the proof of Lemma \ref{lem:energy}, then we get
\begin{align*}
II_{3,1} \lesssim \sum_{k\ge1}\sum_{k_1 \le k_2-10}\sum_{|n|\lesssim 2^{2k_3}}&\Big{|}\int_{\R \times [0,t_k]}(\gamma(2^{2k_3}t-n)\mathbf{1}_{[0,t_k]}(t)\widetilde{P}_k(U))\\
&\times\widetilde{P}_k([\La^s,\widetilde{P}_{k_1}(\gamma(2^{2k_3}t-n)v)]\px^3\widetilde{P}_{k_2}(\gamma(2^{2k_3}t-n)w))dxdt\Big{|}.
\end{align*}
Let $f_k=\gamma(2^{2k_3}t-n)\widetilde{P}_k(U)$, $g_{k_1}=\widetilde{P}_{k_1}(\gamma(2^{2k_3}t-n)v)$ and $h_{k_2}=\widetilde{P}_{k_2}(\gamma(2^{2k_3}t-n)w))$. It is easy to see from $|k_2-k|\le 5$ that
$$|\ft([\La^s,g_{k_1}]\px^3h_{k_2})(\xi,\tau)| \lesssim \int_{\R^2}|\widehat{g}_{k_1}(\xi-\xi_1,\tau-\tau_1)|2^{3k_1}2^{sk_2}|\widehat{h}_{k_2}(\xi_1,\tau_1)|d\xi_1d\tau_1,$$
which follows similarly to \eqref{eq:energy 8}. Then using the same argument in the proof of Lemma \ref{lem:energy}, we can get that
\begin{align*}
II_{3,1} &\lesssim \sum_{k\ge1}\sum_{k_1 \le k_2-10}2^{3k_1}2^{sk_2}2^{-2k_{max}-\frac12k_{mim}}\norm{\widetilde{P}_kU}_{F_k(1)}\norm{\widetilde{P}_{k_1}(v)}_{F_{k_1}(1)}\norm{\widetilde{P}_{k_2}(w)}_{F_{k_2}(1)}\\
&\lesssim \norm{U}_{F^0(1)}^2\norm{v}_{F^s(1)}.
\end{align*}
from a similar way to \eqref{eq:estimate1}.\\
The $II_2$ is identical to the one of $II_3$ from symmetry.\\
So, we now need to control $IV$, but controlling $IV$ is similar and easier than $II$ since derivatives are distributed.\\
Hence we have proved that
$$\norm{U}_{E^0(1)}^2 \lesssim \norm{w_0}_{H^s}^2 + \norm{U}_{F^0(1)}^2(\norm{u}_{F^s(1)}+\norm{v}_{F^s(1)})+ \norm{U}_{F^0(1)}\norm{w}_{F^0(1)}\norm{u}_{F^{2s}(1)}.$$
By \eqref{eq:energy 13} and \eqref{eq:energy 11}, we get
$$\norm{U}_{E^0(1)}\lesssim \norm{u_0-v_0}_{H^s}+\norm{u_0-v_0}_{L^2}\norm{u_0}_{H^{2s}},$$
from which combined with \eqref{eq:energy 20} we completes the proof of the proposition.
\end{proof}

\section{Proof of Theorem \ref{thm:main}}
In this section, we prove Theorem \ref{thm:main}. The main ingredients are $L^2-$convolution estimates which is proved in section \ref{sec:nonlinear} and energy estimates obtained in section \ref{sec:energy}. The basic idea follows the idea of Ionescu, Kenig and Tataru \cite{IKT} and for the weighted $X_k$ norm, we refer to Guo, Peng, Wang and Wang \cite{GPWW}.
\begin{proposition}\label{prop:H^s}
Let $s\ge 0$, $T\in (0,1]$, and $u\in F^s(T)$, then
    \begin{equation}\label{eq:H^s}
\sup_{t\in [-T,T]} \|u(t)\|_{H^s} \lesssim \|u\|_{F^s(T)}
    \end{equation}
\end{proposition}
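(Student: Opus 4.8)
The plan is to reduce the statement to a single frequency--localized bound and then reassemble the dyadic pieces. Since the projections $P_k$ ($k\ge1$) together with $P_{\le0}$ are sharp Fourier cutoffs onto the essentially disjoint sets $I_k$ (on which $\jb{\xi}\sim 2^k$) that cover $\R$, for each fixed $t$ we have $\|u(t)\|_{H^s}^2\sim \|P_{\le0}u(t)\|_{L^2}^2+\sum_{k\ge1}2^{2sk}\|P_ku(t)\|_{L^2}^2$. Taking the supremum in $t$ and using the elementary inequality $\sup_t\sum_k a_k(t)\le\sum_k\sup_t a_k(t)$ for nonnegative $a_k$, it suffices to establish the dyadic estimate
$$\sup_{t\in[-T,T]}\|P_ku(t)\|_{L^2}\lesssim\|P_ku\|_{F_k(T)},\qquad k\in\Z_+,$$
with an implicit constant independent of $k$; summing against $2^{2sk}$ and invoking the definition of the $F^s(T)$ norm then reproduces $\|u\|_{F^s(T)}^2$.

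The heart of the matter is a single--interval embedding $X_k\hookrightarrow C(\R:L^2)$: for any $g\in X_k$, writing $v=\ft^{-1}(g)$, I claim $\sup_t\|v(t)\|_{L^2_x}\lesssim\|g\|_{X_k}$. To prove this, apply Plancherel in $x$ so that $\|v(t)\|_{L^2_x}^2\sim\int_\xi|\int_\tau e^{it\tau}g(\xi,\tau)\,d\tau|^2\,d\xi$, decompose $g=\sum_j g_j$ with $g_j=\eta_j(\tau-w(\xi))g$, and use the triangle inequality in $L^2_\xi$. For each modulation piece, Cauchy--Schwarz in $\tau$ together with the fact that $g_j(\xi,\cdot)$ is supported on a set of measure $\lesssim 2^j$ gives $|\int_\tau e^{it\tau}g_j\,d\tau|\lesssim 2^{j/2}\|g_j(\xi,\cdot)\|_{L^2_\tau}$ uniformly in $t$, so that the $\xi$--integral is $\lesssim 2^{j/2}\|g_j\|_{L^2_{\xi,\tau}}$. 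Summing in $j$ and using $\beta_{k,j}\ge1$ (which holds for every $k$, since $\beta_{0,j}=2^{j/2}\ge1$ and $\beta_{k,j}\ge1$ for $k\ge1$) bounds the total by $\sum_j 2^{j/2}\beta_{k,j}\|g_j\|_{L^2}=\|g\|_{X_k}$, uniformly in $t$; continuity in $t$ then follows by dominated convergence.

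Finally I transfer this to $F_k(T)$ by centering the temporal cutoff at the evaluation time. Fix $t\in[-T,T]$ and choose an extension $\wt f$ of $P_ku$ with $\|\wt f\|_{F_k}\le 2\|P_ku\|_{F_k(T)}$. Taking $t_k=t$ in the supremum defining $\|\cdot\|_{F_k}$, the function $g:=\ft[\wt f\cdot\eta_0(2^{2k}(\cdot-t))]$ satisfies $\|g\|_{X_k}\le\|\wt f\|_{F_k}$. Because $\eta_0(0)=1$, we have $\ft^{-1}(g)(t)=\wt f(t)\,\eta_0(0)=\wt f(t)=P_ku(t)$, the last equality holding since $\wt f=P_ku$ on $\R\times[-T,T]$ and $t\in[-T,T]$. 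Applying the single--interval embedding to $g$ at time $t$ yields $\|P_ku(t)\|_{L^2}=\|\ft^{-1}(g)(t)\|_{L^2}\lesssim\|g\|_{X_k}\le\|\wt f\|_{F_k}\le 2\|P_ku\|_{F_k(T)}$, which is exactly the dyadic estimate. The only genuinely delicate points are this centering step---which is what lets one read off a pointwise--in--time $L^2$ bound from the time--localized norm---and keeping the order of $\sup_t$ and $\sum_k$ correct; both underlying analytic inputs (Plancherel plus Cauchy--Schwarz, and $\beta_{k,j}\ge1$) are soft.
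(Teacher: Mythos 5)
Your proposal is correct and follows essentially the same route as the paper: reduce to the dyadic bound $\sup_t\|P_ku(t)\|_{L^2}\lesssim\|P_ku\|_{F_k(T)}$, center the temporal cutoff at the evaluation time so that $\eta_0(0)=1$ recovers $P_ku(t)$, and control $\|\int f_k(\xi,\tau)e^{it\tau}\,d\tau\|_{L^2_\xi}$ by $\|f_k\|_{X_k}$ via Cauchy--Schwarz on each modulation block together with $\beta_{k,j}\ge1$ (this is exactly the paper's estimate \eqref{eq:pre1}). Your write-up merely spells out the proof of that last inequality, which the paper cites as following "directly from the definition."
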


\begin{proposition}\label{prop:linear}
Let $T\in (0,1]$,  $u,v\in C([-T,T]:H^\infty)$ and
\begin{equation}\label{eq:5kdv2}
\partial_tu + \px^5u  = v  \ on \  \R\times(-T,T)
\end{equation}
Then we have
\begin{equation}\label{eq:linear}
\|u\|_{F^s(T)} \lesssim \|u\|_{E^s(T)}  + \|v\|_{N^s(T)},
\end{equation}
for any $s \ge 0$.
\end{proposition}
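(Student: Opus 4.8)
The plan is to prove this by Littlewood--Paley decomposition, reducing the assembled estimate to a single-frequency estimate for each dyadic block and then square-summing with the weights $2^{2sk}$. Because $F^s(T)$, $N^s(T)$ and $E^s(T)$ are all $\ell^2$-type assemblies of the dyadic quantities $\|P_k\cdot\|_{F_k(T)}$, $\|P_k\cdot\|_{N_k(T)}$ and $\sup_{t_k}\|P_k\cdot(t_k)\|_{L^2}$ with the common factor $2^{sk}$, it suffices to establish, for each $k\in\Z_+$,
$$\|P_k u\|_{F_k(T)} \lesssim \sup_{t_k\in[-T,T]}\|P_k u(t_k)\|_{L^2} + \|P_k v\|_{N_k(T)},$$
where for $k=0$ the first term on the right is replaced by $\|P_{\le 0}u(0)\|_{L^2}$; this is precisely the form in which frequency $0$ enters $E^s(T)$.

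The two building blocks I would invoke are the standard short-time propagation estimates from the Ionescu--Kenig--Tataru framework \cite{IKT}, adapted to the weighted $X_k$ norm \eqref{eq:Xk} as in \cite{GPWW}: first, the homogeneous bound, that for $\phi$ with Fourier support in $\widetilde I_k$ and any $t_0\in\R$,
$$\big\|\ft[\eta_0(2^{2k}(t-t_0))\,W(t-t_0)\phi]\big\|_{X_k} \lesssim \|\phi\|_{L^2};$$
and second, the Duhamel bound, that for $v_k$ with Fourier support in $\widetilde I_k$ and any $t_0$,
$$\Big\|\ft\Big[\eta_0(2^{2k}(t-t_0))\int_{t_0}^{t} W(t-s)\,v_k(s)\,ds\Big]\Big\|_{X_k} \lesssim \|v_k\|_{N_k}.$$
Both are proved on the whole line and collected in the appendix; the only point to verify is that the weight $\beta_{k,j}$ creates no loss, which is immediate since the second estimate reduces to \eqref{eq:pre2} together with the definition of $N_k$, and the first to the same line plus the localization property \eqref{eq:pXk3}.

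With these in hand I would argue as follows. Fix $k\ge 1$ and $t_k\in[-T,T]$, together with an extension of $v$ nearly realizing $\|P_k v\|_{N_k(T)}$. On the interval of length $\sim 2^{-2k}$ centered at $t_k$, the hypothesis $\pt u+\px^5 u=v$ and Duhamel's formula give
$$P_k u(t)=W(t-t_k)\,P_k u(t_k)+\int_{t_k}^{t} W(t-s)\,P_k v(s)\,ds.$$
Multiplying by $\eta_0(2^{2k}(t-t_k))$ and taking $X_k$ norms, I apply the homogeneous bound to the first term (controlled by $\|P_k u(t_k)\|_{L^2}$) and the Duhamel bound to the second (controlled by $\|P_k v\|_{N_k(T)}$ after passing to the infimum over extensions). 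Taking the supremum over $t_k\in[-T,T]$ yields the dyadic estimate for $k\ge1$. The block $k=0$ is simpler: the relevant time scale is $\sim 2^{0}=1\ge T$, so a single application of Duhamel from $t_0=0$ covers all of $[-T,T]$ and the homogeneous term is controlled by $\|P_{\le 0}u(0)\|_{L^2}$, matching the frequency-$0$ contribution to $E^s(T)$. Squaring, multiplying by $2^{2sk}$ and summing in $k$ then gives \eqref{eq:linear}.

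The main obstacle is the Duhamel bound: one must control the time-truncated antiderivative in the $\ell^1$-in-modulation, weighted $X_k$ norm, where the sharp cutoff $\eta_0(2^{2k}(t-t_0))$ interacts with the resolvent multiplier $(\tau-w(\xi)+i2^{2k})^{-1}$ built into $N_k$. I would handle this by writing the Fourier transform of the cutoff as an $L^1$-normalized kernel of width $2^{2k}$ in $\tau$ and invoking \eqref{eq:pre2} to absorb the resulting convolution against the modulation weights; the factor $2^{2k}$ in the resolvent is exactly the scale of the cutoff, so no loss occurs. Everything else is routine bookkeeping: matching the $k=0$ convention to the energy norm and the $\ell^2$ reassembly.
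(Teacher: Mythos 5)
Your proposal follows essentially the same route as the paper: reduce to the dyadic estimate $\|P_k u\|_{F_k(T)}\lesssim \sup_{t_k}\|P_k u(t_k)\|_{L^2}+\|P_k v\|_{N_k(T)}$ and then prove the homogeneous and Duhamel bounds in $X_k$ (the paper's Lemma~\ref{lem:Es} and Lemma~\ref{lem:Ns}), with the interaction between the time cutoff and the resolvent multiplier absorbed via \eqref{eq:pre2} exactly as you describe. The one step you leave implicit, which the paper carries out in Step 1 of its proof, is the construction of a global-in-time extension of $P_k u$ (by continuing the Duhamel formula past $\pm T$ under a damping cutoff) together with the reduction of $\sup_{t_k\in\R}$ to $\sup_{t_k\in[-T,T]}$ via \eqref{eq:pXk3}; this is needed because $F_k(T)$ is an infimum over extensions of a whole-line norm, so controlling the cutoff localizations only for $t_k\in[-T,T]$ does not by itself bound $\|P_k u\|_{F_k(T)}$.
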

The proof of the Proposition \ref{prop:H^s} and \ref{prop:linear} are similar to \cite{IKT} and  \cite{GPWW}. For self-containedness, we give the proof in Appendix.\\

Now, we show the local well-posedness of \eqref{eq:5kdv} by using the classical energy method. From Duhamel's principle, we get that the equation \eqref{eq:5kdv} is equivalent to the following integral equation,
\begin{equation}\label{eq:duhamel}
u(t) = W(t)u_0 + \int_{0}^{t} W(t-s) v(s) ds,
\end{equation}
where $v(t,x) = c_1\px u\px^2u + c_2u\px^3u$.\\
We will work on the following localized version,
\begin{equation}\label{eq:localized duhamel}
u(t) = \eta_0(t)W(t)u_0 + \eta_0(t)\int_{0}^{t} W(t-s) v(s) ds.
\end{equation}
Then we see that if $u$ is a solution to \eqref{eq:localized duhamel} on $\R$, then $u$ solves \eqref{eq:duhamel} on $[-1,1]$. \\
By the scaling invariance:
\begin{equation}\label{eq:scaling}
u_{\lambda}(t,x) = \lambda^{-2}u(\frac{t}{\lambda^5}, \frac{x}{\lambda}),
\end{equation}
and observing $s_c=-\frac 32$, we may assume that
\begin{equation}\label{eq:assumption}
\norm{u_0}_{H^s} \le \epsilon \ll 1.
\end{equation}
For part (a) of Theorem \ref{thm:main}, we assume that $s \ge \frac54$.\\
We already know from \cite{Ponce94} that there is a smooth solution to the \eqref{eq:5kdv} with $u_0 \in H^{\infty}$. So, we show \emph{a priori} bound: if $T \in (0,1]$ and $u \in C([-T,T]:H^{\infty})$ is a solution of \eqref{eq:5kdv} with $\norm{u_0}_{H^s} \le \epsilon \ll 1$, then
\begin{equation}\label{eq:result}
\sup_{t \in [-T,T]}\norm{u(t)}_{H^s} \lesssim \norm{u_0}_{H^s}.
\end{equation}
It comes from the linear estimate (Proposition \ref{prop:linear}), the $L^2$ estimate (Proposition \ref{prop:bilinear} (a)) and the energy estimate (Proposition \ref{prop:energy1}). More precisely, for any $T' \in [0,T]$ we have
\begin{eqnarray}\label{eq:proof1}
\left \{
\begin{array}{l}
\norm{u}_{F^{s}(T')}\lesssim \norm{u}_{E^s(T')} + \norm{\px^3(u^2)}_{N^s(T')} + \norm{u\px^3u}_{N^s(T')};\\
\norm{\px^3(u^2)}_{N^s(T')} + \norm{u\px^3u}_{N^s(T')} \lesssim
\norm{u}_{F^s(T')}^2;\\
\norm{u}_{E^s(T')}^2 \lesssim
\norm{u_0}_{H^s}^2 + \norm{u}_{F^s(T')}^3.
\end{array}
\right.
\end{eqnarray}
Let $X(T')=\norm{u}_{E^s(T')} + \norm{\px^3(u^2)}_{N^s(T')} + \norm{u\px^3u)}_{N^s(T')}$. From similar argument as in the proof of Lemma 4.2 in \cite{IKT}, we know that $X(T')$ is continuous, increasing on $[-T,T]$ and satisfies
$$\lim_{T' \to 0}X(T') \lesssim \norm{u_0}_{H^s}.$$
Moreover, we obtain from \eqref{eq:proof1} that
$$X(T')^2 \lesssim \norm{u_0}_{H^s}^2 +X(T')^3 +X(T')^4.$$
If $\epsilon$ is small enough, then using bootstrap (see \cite{Tao3}), $X(T') \lesssim \norm{u_0}_{H^s}$ can be obtained by \eqref{eq:assumption}. Hence we obtain
\begin{equation}\label{eq:preresult}
\norm{u}_{F^s(T')} \lesssim \norm{u_0}_{H^s},
\end{equation}
which implies \eqref{eq:result} by Proposition \ref{prop:H^s}.\\

\noindent For part (b), we now assume $s \ge 2$ so that we use Proposition \ref{prop:bilinear} (b) and Proposition \ref{prop:energy2}. In order to obtain a solution in $H^s$, we use compactness argument which follows the ideas in \cite{IKT}. Fix $u_0 \in H^s$. Then we can choose $\{u_{0,n}\}_{n=1}^{\infty} \subset H^{\infty}$ such that $u_{0,n} \to u_0$ in $H^s$ as $n \to \infty$. Let $u_n(t) \in H^{\infty}$ is a solution to \eqref{eq:5kdv} with initial data $u_{0,n}$. Then it suffices to show that the sequence $\{u_n\}$ is a Cauchy sequence in $C([-T,T]:H^s)$. For $K \in \Z_+$, let $u_{0,n}^K = P_{\le K}(u_{0,n})$. Then since
$$\sup_{t \in [-T,T]}\norm{u_m-u_n}_{H^s} \lesssim \sup_{t \in [-T,T]}\norm{u_m-u_m^K}_{H^s} +\sup_{t \in [-T,T]}\norm{u_m^K-u_n^K}_{H^s} +\sup_{t \in [-T,T]}\norm{u_n^K-u_n}_{H^s},$$
it suffices to show that for any $\vep > 0 $ and $K$, we have
\begin{equation}\label{eq:proof2}
\sup_{t \in [-T,T]}\norm{u_n-u_n^K}_{H^s} \le \vep/3 \quad \mbox{and} \quad \sup_{t \in [-T,T]}\norm{u_m^K-u_n^K}_{H^s} \le \vep/3,
\end{equation}
for sufficiently large $n,m$.\\
First, choose  large $K$ such that $\norm{u_0-u_0^K}_{H^s} \le o(1)$. Then since $u_{0,n}^K \to u_0^K$ in $H^s$ for any $K$, we get
\begin{align*}
\sup_{t \in [-T,T]}\norm{u_m^K-u_n^K}_{H^s} &\lesssim \norm{u_m^K-u_n^K}_{F^s(T)}\\
&\lesssim \norm{u_{0,m}^K-u_{0,n}^K}_{H^s} + \norm{u_{0,n}^K}_{H^{2s}}\norm{u_{0,m}^K-u_{0,n}^K}_{L^2} \\
&\lesssim \norm{u_{0,m}^K-u_0^K}_{H^s} + \norm{u_{0,n}^K-u_0^K}_{H^s},
\end{align*}
for large $m,n$ and by Proposition \ref{prop:H^s} and \ref{prop:energy2}. And this gives the second part of \eqref{eq:proof2}.\\
From same argument to above and $\norm{u_{0,n}-u_0}_{H^s} \to 0$ for large $n$, we get the first part of \eqref{eq:proof2}. Hence, we complete the existence of a solution. The uniqueness of the solution and the last part of Theorem \ref{thm:main} comes from the classical energy method, the scaling \eqref{eq:scaling}, and Proposition \ref{prop:H^s}. We omit the detail.

\appendix

\section{}
In appendix, we collect proofs of Proposition~\ref{prop:H^s} and Proposition \ref{prop:linear} for convenience of readers. Similar proofs are found in \cite{IKT,Guo2, GPWW}.\\

\begin{proof}[Proof of Proposition \ref{prop:H^s}] $\,$\\
We use extended formula $\wt{u}_k$ instead of $u$ as in proof of Proposition \ref{prop:bilinear}. First, our observation is that,
\begin{equation}\label{eq:appendix a.1}
\sup_{t\in [-T,T]} \|u(t)\|_{H^s}^2 \approx \sum_{k \ge 0} 2^{2sk}\sup_{t_k \in [-T,T]}\norm{\ft_x[u_k(t_k)]}_{L^2(\R)}^2
\end{equation}
and
\begin{equation}\label{eq:appendix a.2}
\|u(t)\|_{F^s(T)}^2 \approx \sum_{k \ge 0} 2^{2sk}\sup_{t_k \in [-T,T]}\norm{\ft[\eta_0(2^k(t-t_k))u_k)]}_{X_k}^2.
\end{equation}
From comparing \eqref{eq:appendix a.1} and \eqref{eq:appendix a.2}, it is enough to prove that
\begin{equation}\label{eq:appendix a.3}
\norm{\ft_x[\wt{u}_k(t_k)]}_{L^2(\R)} \lesssim \norm{\ft[\eta_0(2^k(t-t_k))\wt{u}_k)]}_{X_k},
\end{equation}
for $k \in \Z_+$, $t_k \in [-T,T]$ and $\wt{u}_k \in F_k$, which is an extension of $u_k$. (Precise explanation of an extension will be mentioned in the proof of Proposition \ref{prop:linear} later.)\\
Let $f_k = \ft[\eta_0(2^k(t-t_k))\wt{u}_k)]$, then we have
$$\ft_x[\wt{u}_k(t_k)](\xi) = c\int_{\R}f_k(\tau,\xi)e^{it_k\tau} d\tau.$$
From \eqref{eq:pre1}, we have
$$\norm{\ft_x[\wt{u}_k(t_k)]}_{L_{\xi}^2} \lesssim \normo{\int_{\R}f_k(\tau,\xi)e^{it_k\tau} d\tau}_{L_{\xi}^2} \lesssim \norm{f_k}_{X_k},$$
which implies completion of \eqref{eq:appendix a.3} and completes the proof of the Proposition \ref{prop:H^s}.\\
\end{proof}

\begin{proof}[Proof of Proposition \ref{prop:linear}] $\,$\\
To prove this proposition, we see from the definitions that the square of the right-hand side of \eqref{eq:linear} as following,
$$\norm{P_{\le0}(u(0))}_{L^2}^2 + \norm{P_{\le0}(v))}_{N_0(T)}^2 + \sum_{k \ge 1}\Big( \sup_{t_k \in [-T,T]}  2^{2sk}\norm{P_{k}(u(t_k))}_{L^2}^2 + 2^{2sk}\norm{P_{k}(v)}_{N_k(T)}^2\Big).$$
Thus, from definitions, it is enough to prove that
\begin{equation}\label{eq:appendix b.2}
\begin{cases}
\norm{P_{\le0}(u)}_{F_0(T)} \lesssim \norm{P_{\le0}(u(0))}_{L^2} + \norm{P_{\le0}(v))}_{N_0(T)} ; \\
\norm{P_{k}(u)}_{F_k(T)} \lesssim \sup\limits_{t_k \in [-T,T]} \norm{P_{k}(u(t_k))}_{L^2} + \norm{P_{k}(v))}_{N_k(T)}  \qquad \mbox{if} \quad k \ge 1.
\end{cases}
\end{equation}
for $k \in \Z_+$ and $u,v\in C([-T,T]:H^\infty)$ which solve \eqref{eq:5kdv2}.\\
\noindent $\mathbf{Step \ 1 : Extension \ of} \ P_k(u)$.\\
Fix $k \ge 0$ and $\wt{v}$ denote an extension of $P_k(v)$ such that $\norm{\wt{v}}_{N_k} \le C\norm{v}_{N_k(T)}$. In view of \eqref{eq:Sk}, we may assume that $\wt{v}$ is supported in $\R \times [-T-2^{-2k-10},T+2^{-2k-10}]$. More precisely, let $\theta(t)$ be a smooth function such that
$$\theta(t) = 1, \quad \mbox{if} \quad t \ge 1; \qquad \quad \theta(t)=0, \quad \mbox{if} \quad t \le 0.$$
Let $m_k(t)=\theta(2^{2k+10}(t+T+2^{-2k-10}))\theta(-2^{2k+10}(t-T-2^{-2k-10}))$. Then $m_k \in S_k$ and we see that $m_k$ is supported in $[-T-2^{-2k-10},T+2^{-2k-10}]$ and equal to $1$ in $[-T,T]$. From \eqref{eq:Sk}, we consider $\wt{v}$ instead of $m_k(t)v$. We define for $t \ge T$,
$$\wt{u}(t) = \eta_0(2^{2k+5}(t-T))\bigg[ W(t-T)P_k(u(T)) + \int_{T}^{t} W(t-s)P_k(\wt{v}(s)) ds \bigg],$$
and for $t \le -T$,
$$\wt{u}(t) = \eta_0(2^{2k+5}(t+T))\bigg[ W(t+T)P_k(u(-T)) + \int_{t}^{-T} W(t-s)P_k(\wt{v}(s)) ds \bigg].$$
For $t \in [-T,T]$, we define $\wt{u}(t)=u(t)$. It is obvious that $\wt{u}$ is an extension of $u$ and we get from definitions of $F_k(T)$ and $F_k$ that
\begin{equation}\label{eq:extension1}
\norm{u}_{F_k(T)} \lesssim \sup_{t_k \in [-T,T]} \norm{\ft[\wt{u}\cdot\eta_0(2^{2k}(t-t_k))]}_{X_k}.
\end{equation}
Indeed, in view of the definition of $F_k$, we can get \eqref{eq:extension1} if the followig holds.
\begin{equation}\label{eq:extension2}
\sup_{t_k \in \R}\norm{\ft[\wt{u}\cdot\eta_0(2^{2k}(t-t_k))]}_{X_k} \lesssim \sup_{t_k \in [-T,T]}\norm{\ft[\wt{u}\cdot\eta_0(2^{2k}(t-t_k))]}_{X_k}.
\end{equation}
For $t_k > T$, since $\wt{u}$ is supported in $[-T-2^{-2k-5},T+2^{-2k-5}]$, we can see that
$$\wt{u}\eta_0(2^{2k}(t-t_k)) = \wt{u}\eta_0(2^{2k}(t-T))\eta_0(2^{2k}(t-t_k)).$$
And we get from \eqref{eq:pXk3} that
$$\sup_{t_k > T}\norm{\ft[\wt{u}\cdot\eta_0(2^{2k}(t-t_k))]}_{X_k} \lesssim \sup_{t_k \in [-T,T]}\norm{\ft[\wt{u}\cdot\eta_0(2^{2k}(t-t_k))]}_{X_k}.$$
Using the same method for $t_k < -T$, then we obtain \eqref{eq:extension1}.\\

\noindent $\mathbf{Step \ 2 : Linear \ estimates}$\\
For fixed $k \ge 0$. In view of the definitions, \eqref{eq:extension1} and \eqref{eq:pXk3}, it is enough to prove that if $\phi_k \in L^2$ with $\wh{\phi}_k$ supported in $I_k$, and $v_k \in N_k$ with time support in an interval $I$ ($|I| \lesssim 2^{2k}$), then
\begin{equation}\label{eq:appendix b.3}
\norm{\ft[u_k\cdot\eta_0(2^{2k}t)]}_{X_k} \lesssim \norm{\phi_k}_{L^2} + \norm{(\tau-w(\xi) +i2^{2k})^{-1}\ft(v_k)}_{X_k},
\end{equation}
where
\begin{equation}\label{eq:appendix b.4}
u_k(t) = W(t)\phi_k + \int_{0}^{t} W(t-s) v_k(s) ds.
\end{equation}
Then from the properties of Fourier transform, direct computations show that
\begin{equation}\label{eq:appendix b.5}
\begin{split}
\ft[u_k\cdot&\eta_0(2^{2k}t)](\tau, \xi) = \ft_x(\phi_k)(\xi)\ft_t[e^{itw(\xi)}\eta_0(2^{2k}t)](\tau)\\
&+ C\int_{\R} \ft(v_k)(\xi,\tau')\frac{\ft_t(\eta_0)(2^{-2k}(\tau-\tau'))-\ft_t(\eta_0)(2^{-2k}(\tau-w(\xi)))}{i2^{2k}(\tau'-w(\xi))} d\tau'.
\end{split}
\end{equation}
More precisely, for second part of \eqref{eq:appendix b.5}, consider $\eta_0(t)$ instead of $\eta_0(2^{2k}t)$. Then we have
\begin{align*}
\ft\Big[\eta_0(t)\int_{0}^{t} W(t-s) u(s) ds\Big] &= \ft_t\Big[\eta_0(t)e^{itw(\xi)}\int_{0}^{t}e^{-isw(\xi)}\ft_x(u)(s)ds\Big] \\
&= \ft_t\Big[\eta_0(t)e^{itw(\xi)}\int_{\R}e^{-isw(\xi)}\mathbf{1}_{[0,t]}(s)\ft_x(u)(s)ds\Big]\\
&= \ft_t\Big[\eta_0(t)e^{itw(\xi)}(\hat{u} \ast \ft_s(\mathbf{1}_{[0,t]}))(w(\xi))\Big].
\end{align*}
Since
$$\ft_s(\mathbf{1}_{[0,t]})(\tau) = \frac{e^{-it\tau}-1}{-i\tau},$$
we obtain that
\begin{align*}
\ft\Big[\eta_0(t)\int_{0}^{t} W(t-s) u(s) ds\Big] &= \ft_t\Big[\eta_0(t)e^{itw(\xi)}\int_{\R}\hat{u}(\tau',\xi) \frac{e^{-it(w(\xi)-\tau')}-1}{i(\tau'-w(\xi))} d\tau'\Big] \\
&=\int_{\R^2}e^{-it\tau}\eta_0(t)e^{itw(\xi)}\hat{u}(\tau',\xi) \frac{e^{-it(w(\xi)-\tau')}-1}{i(\tau'-w(\xi))} d\tau'dt \\
&=\int_{\R}\hat{u}(\tau',\xi) \frac{\ft_t(\eta_0)(\tau-\tau')-\ft_t(\eta_0)(\tau-w(\xi))}{i(\tau'-w(\xi))} d\tau'.
\end{align*}
Because of $\ft_t(f(\la t))(\tau) = \la^{-1}\ft_t(f(t))(\la^{-1}\tau)$, we get the second part of \eqref{eq:appendix b.5}.\\
We consider that the right-hand side of \eqref{eq:appendix b.5} separately.
\begin{lemma}\label{lem:Es}
Let $\phi_k \in L^2$ with $\phi_k$ supported in $I_k$. Then, for any $k \in \Z_+$, we have
\begin{equation}\label{eq:Es}
\norm{\ft[\eta_0(2^{2k}t)W(t)\phi_k]}_{X^k} \lesssim \norm{\phi_k}_{L^2}.
\end{equation}
\end{lemma}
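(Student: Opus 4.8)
The plan is to compute the spacetime Fourier transform of $\eta_0(2^{2k}t)W(t)\phi_k$ explicitly and then insert it into the definition of the $X_k$ norm. Since $\ft_x[W(t)\phi_k](\xi,t)=e^{itw(\xi)}\ft_x(\phi_k)(\xi)$, taking the Fourier transform in $t$ and using the scaling identity $\ft_t(f(\lambda t))(\tau)=\lambda^{-1}\ft_t(f)(\lambda^{-1}\tau)$ yields exactly the first term on the right-hand side of \eqref{eq:appendix b.5}, which we record as
$$\ft[\eta_0(2^{2k}t)W(t)\phi_k](\xi,\tau)=\ft_x(\phi_k)(\xi)\cdot g_k(\tau-w(\xi)),\qquad g_k(\sigma):=2^{-2k}\wh{\eta_0}(2^{-2k}\sigma).$$
The crucial structural feature is that the modulation variable enters only through the profile $g_k(\tau-w(\xi))$, which depends solely on $\tau-w(\xi)$.

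First I would exploit this factorization. For each fixed $j$, changing variables $\sigma=\tau-w(\xi)$ in the inner $\tau$-integral decouples the $\xi$ and $\sigma$ integrations, so that by Plancherel
$$\norm{\eta_j(\tau-w(\xi))\cdot\ft[\eta_0(2^{2k}t)W(t)\phi_k]}_{L^2_{\xi,\tau}}=\norm{\phi_k}_{L^2}\cdot\norm{\eta_j\cdot g_k}_{L^2}.$$
Consequently the claimed bound \eqref{eq:Es} reduces to the purely one-dimensional estimate
$$\sum_{j\ge0}2^{j/2}\beta_{k,j}\norm{\eta_j\cdot g_k}_{L^2}\lesssim1,$$
to be proved uniformly in $k\in\Z_+$.

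Next I would estimate $\norm{\eta_j g_k}_{L^2}$ using that $\eta_0$ is a smooth bump, so $\wh{\eta_0}$ is Schwartz and obeys $|\wh{\eta_0}(y)|\lesssim(1+|y|)^{-N}$ for every $N$. On the support of $\eta_j$ (where $|\sigma|\sim2^j$), the change of variables $y=2^{-2k}\sigma$ gives
$$\norm{\eta_j g_k}_{L^2}\lesssim 2^{j/2}2^{-2k}(1+2^{j-2k})^{-N}.$$
This captures the essential point: the free evolution is concentrated on the curve $\tau=w(\xi)$, and its modulation profile $g_k$ lives, up to rapid decay, at scale $|\sigma|\lesssim2^{2k}$, precisely the time-localization scale $2^{-2k}$.

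Finally I would sum the series, splitting the range of $j$ according to the two regimes in the definition of $\beta_{k,j}$. For $k=0$ one has $\beta_{0,j}=2^{j/2}$ and the bound reduces to $\sum_{j}2^{3j/2}(1+2^j)^{-N}\lesssim1$, which converges for $N\ge2$. For $k\ge1$ I would split into: the range $j\le2k$, where $\beta_{k,j}\sim1$ and the geometric sum $\sum_{j\le2k}2^j$ is balanced by the prefactor $2^{-2k}$; the intermediate range $2k<j\le5k$, where again $\beta_{k,j}\sim1$ and the decay $(1+2^{j-2k})^{-N}$ makes the tail summable to $O(1)$; and the large-modulation range $j>5k$, where $\beta_{k,j}\sim2^{(j-5k)/8}$ is largest. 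This last range is the main obstacle: one must verify that the slow growth of the weight (exponent $1/8$) is overwhelmed by the Schwartz decay of $\wh{\eta_0}$. A direct computation shows the contribution of this range is $\lesssim2^{3(1-N)k}\le1$ for $N\ge2$, the $k$-dependence being harmless precisely because the threshold in $\beta_{k,j}$ sits at $j=5k\sim\log|H|$. Combining the three ranges yields a bound uniform in $k$, which proves \eqref{eq:Es}.
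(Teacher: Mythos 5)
Your proof is correct and follows essentially the same route as the paper: reduce by the factorization $\ft[\eta_0(2^{2k}t)W(t)\phi_k]=\ft_x(\phi_k)(\xi)\,g_k(\tau-w(\xi))$ to the one-dimensional estimate $\sum_j 2^{j/2}\beta_{k,j}\norm{\eta_j g_k}_{L^2}\lesssim 1$, and then use the Schwartz decay of $\wh{\eta_0}$ to beat the weight. The paper simply asserts this last summation without computation, whereas you carry out the range-splitting $j\le 2k$, $2k<j\le 5k$, $j>5k$ explicitly and verify uniformity in $k$; the details check out.
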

\begin{proof}
Since
$$\norm{\ft[\eta_0(2^{2k}t)W(t)\phi_k]}_{X^k} = \sum_{j \ge 0}2^{j/2}\beta_{k,j}\norm{\eta_j(\tau-w(\xi))\ft_t[e^{itw(\xi)}\eta_0(2^{2k}t)]\ft_x(\phi_k)(\xi)}_{L_{\tau, \xi}^2},$$
we need to show that
\begin{equation}\label{eq:appendix b.6}
\sum_{j \ge 0}2^{j/2}\beta_{k,j}\norm{\eta_j(\tau-w(\xi))\ft_t[e^{itw(\xi)}\eta_0(2^{2k}t)]}_{L_{\tau}^2} \lesssim 1.
\end{equation}
Since $\eta_0$ is a Schwartz function, which decays faster than any polynomial, we can get \eqref{eq:appendix b.6}, which makes Lemma \ref{lem:Es} to be true.
\end{proof}
\begin{lemma}\label{lem:Ns}
Let $v_k \in N_k$. Then, for any $k \in \Z_+$,
\begin{equation}\label{eq:Ns}
\normb{\ft\Big[\int_{0}^{t} \eta_0(2^{2k}t)W(t-s) v_k(s) ds\Big]}_{X_k} \lesssim \norm{(\tau-w(\xi) +i2^{2k})^{-1}\ft(v_k)}_{X_k}.
\end{equation}
\end{lemma}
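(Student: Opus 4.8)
The plan is to reduce \eqref{eq:Ns} to the convolution bound \eqref{eq:pre2}, after extracting the precise algebraic structure of the Duhamel kernel from the identity \eqref{eq:appendix b.5}. Write $\psi=\ft_t(\eta_0)$ (a Schwartz function), $g=\ft(v_k)$, and set $h(\xi,\tau')=(\tau'-w(\xi)+i2^{2k})^{-1}g(\xi,\tau')$, so that the right-hand side of \eqref{eq:Ns} is exactly $\norm{h}_{X_k}$ and $g=(\tau'-w(\xi)+i2^{2k})h$. By \eqref{eq:appendix b.5} the Fourier transform of the Duhamel term is $C\int_{\R}g(\xi,\tau')K(\xi,\tau,\tau')\,d\tau'$, with numerator $N(\xi,\tau,\tau')=\psi(2^{-2k}(\tau-\tau'))-\psi(2^{-2k}(\tau-w(\xi)))$ and denominator $i2^{2k}(\tau'-w(\xi))$. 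Substituting the expression for $g$ and using $\frac{\tau'-w+i2^{2k}}{i2^{2k}(\tau'-w)}=\frac1{i2^{2k}}+\frac1{\tau'-w}$, I would split the left-hand side of \eqref{eq:Ns} as $F_1+F_2$, where
\[F_1=\frac{C}{i2^{2k}}\int_{\R}N(\xi,\tau,\tau')h(\xi,\tau')\,d\tau',\qquad F_2=C\int_{\R}\frac{N(\xi,\tau,\tau')}{\tau'-w(\xi)}h(\xi,\tau')\,d\tau'.\]
The goal is then $\norm{F_1}_{X_k}+\norm{F_2}_{X_k}\les\norm{h}_{X_k}$.

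For $F_1$ I would treat the two terms of $N$ separately. The first produces $2^{-2k}\bigl(\psi(2^{-2k}\cdot)\ast h(\xi,\cdot)\bigr)(\tau)$; since $\psi$ is Schwartz, $2^{-2k}|\psi(2^{-2k}(\tau-\tau'))|\les 2^{-2k}(1+2^{-2k}|\tau-\tau'|)^{-4}$, exactly the kernel of \eqref{eq:pre2} with $l=2k$ (legitimate since $2k\le 5k$), so its $X_k$-norm is $\les\norm{h}_{X_k}$. The second term factorizes as $2^{-2k}\psi(2^{-2k}(\tau-w(\xi)))\int_{\R}h(\xi,\tau')\,d\tau'$; the modulation profile $2^{-2k}\psi(2^{-2k}(\tau-w(\xi)))$ has $X_k$-norm $O(1)$ by the Schwartz decay of $\psi$ and the summability of $2^{j/2}\beta_{k,j}$ against this profile, while $\norm{\int|h|\,d\tau'}_{L^2_\xi}\les\norm{h}_{X_k}$ by \eqref{eq:pre1}.

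The term $F_2$ carries the apparent singularity $1/(\tau'-w(\xi))$ and is the main obstacle. I would decompose $h=\sum_{j'\ge0}h_{j'}$ by modulation, $h_{j'}=\eta_{j'}(\tau'-w(\xi))h$, and split according to whether $|\tau'-w(\xi)|$ is below or above $2^{2k}$. For $j'\le 2k$ the numerator $N$ vanishes at $\tau'=w(\xi)$ and the two arguments of $\psi$ differ by $|2^{-2k}(\tau'-w(\xi))|\le1$, so the mean value theorem gives $|N|/|\tau'-w(\xi)|\les 2^{-2k}(1+2^{-2k}|\tau-\tau'|)^{-4}$; this is again the kernel of \eqref{eq:pre2} at scale $l=2k$, hence the low-modulation part of $F_2$ is $\les\norm{h}_{X_k}$. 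For $j'>2k$ I would use the gain $|\tau'-w(\xi)|^{-1}<2^{-2k}$ and bound $|N|$ by the sum of the two Schwartz factors: the $\psi(2^{-2k}(\tau-\tau'))$ part is a scale-$2^{2k}$ convolution times $2^{2k-j'}\le1$, controlled by \eqref{eq:pre2} applied to each $h_{j'}$ and summed using $\beta_{k,j'}\ge1$, while the $\psi(2^{-2k}(\tau-w(\xi)))$ part is estimated as in $F_1$ with extra decay $2^{2k-j'}$. In both cases the residual dyadic sum $\sum_{j'>2k}2^{2k-j'}2^{j'/2}\beta_{k,j'}\norm{\eta_{j'}(\tau'-w)h}_{L^2}$ is dominated by $\norm{h}_{X_k}$ since $2^{2k-j'}\le1$.

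The delicate point throughout is the interplay in $F_2$ between the cancellation encoded in the vanishing of $N$ at $\tau'=w(\xi)$ and the weight $\beta_{k,j}$; one must check that neither the low-modulation convolution nor the high-modulation tail overshoots the $X_k$ norm, which is exactly where \eqref{eq:pre2}, tailored to the weight and to the scale $l=2k$, is used. Finally, the same argument applies verbatim when $k=0$, with $l=0$, $\beta_{0,j}=2^{j/2}$, and scale-$1$ kernels, which completes the proof.
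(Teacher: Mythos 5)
Your proposal is correct and follows essentially the same route as the paper: both reduce the Duhamel term, via the identity \eqref{eq:appendix b.5}, to the two convolution bounds \eqref{eq:appendix b.7}--\eqref{eq:appendix b.8} for $\bar v_k=(\tau-w(\xi)+i2^{2k})^{-1}\ft(v_k)$, handled by \eqref{eq:pre2} at scale $l=2k$ and by the factorized kernel together with \eqref{eq:pre1}. Your partial-fraction split $\frac{\tau'-w+i2^{2k}}{i2^{2k}(\tau'-w)}=\frac{1}{i2^{2k}}+\frac{1}{\tau'-w}$ combined with the mean-value-theorem case analysis in $|\tau'-w(\xi)|\lessgtr 2^{2k}$ is precisely a proof of the pointwise kernel bound that the paper asserts without detail at the start of its argument, so the two proofs coincide in substance.
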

\begin{proof}
Consider the second part of \eqref{eq:appendix b.5}. From the oscillatory integrals for the smooth functions, we observe  that
\begin{align*}
\bigg| &\frac{\ft_t[\eta_0(2^{-2k}(\tau-\tau'))]-\ft_t[\eta_0(2^{-2k}(\tau-w(\xi)))]}{2^{2k}(\tau'-w(\xi))} \cdot (\tau'-w(\xi) + 2^{2k})\bigg| \\
&\lesssim 2^{-2k}(1+2^{-2k}|\tau - \tau'|)^{-4} + 2^{-2k}(1+2^{-2k}|\tau - w(\xi)|)^{-4}.
\end{align*}
So, let $\bar{v}_k = (\tau-w(\xi) +i2^{2k})^{-1}\ft(v_k)$. Then we need to show that
\begin{equation}\label{eq:appendix b.7}
\sum_{j \ge 0}2^{j/2}\beta_{k,j}\normb{\eta_j(\tau-w(\xi))\int_{\R}\bar{v}_k(\xi, \tau')2^{-2k}(1+2^{-2k}|\tau - \tau'|)^{-4} d\tau'}_{L^2} \lesssim \norm{\bar{v}_k}_{X_k}
\end{equation}
and
\begin{equation}\label{eq:appendix b.8}
\sum_{j \ge 0}2^{j/2}\beta_{k,j}\normb{\eta_j(\tau-w(\xi))\int_{\R}\bar{v}_k(\xi, \tau')2^{-2k}(1+2^{-2k}|\tau - w(\xi)|)^{-4} d\tau'}_{L^2} \lesssim \norm{\bar{v}_k}_{X_k}.
\end{equation}
For \eqref{eq:appendix b.8}, we observe that
\begin{align*}
\normb{\eta_j(\tau-w(\xi))\int_{\R}&\bar{v}_k(\xi, \tau')2^{-2k}(1+2^{-2k}|\tau - w(\xi)|)^{-4} d\tau'}_{L^2}\\
&\lesssim \norm{\eta_j(\tau-w(\xi))2^{-2k}(1+2^{-2k}|\tau - w(\xi)|)^{-4} }_{L_{\tau}^2}\normb{\int_{\R}\bar{v}_k(\xi, \tau') d\tau'}_{L_{\xi}^2}.
\end{align*}
Since $2^{-2k}(1+2^{-2k}|\tau - w(\xi)|)^{-4}$ decays faster than $\beta_{k,j}(1+|\tau - w(\xi)|^{2})^{1/4}$, we have
$$\sum_{j \ge 0}2^{j/2}\beta_{k,j}\norm{\eta_j(\tau-w(\xi))2^{-2k}(1+2^{-2k}|\tau - w(\xi)|)^{-4} }_{L_{\tau}^2} \lesssim 1.$$
So, we can say from \eqref{eq:pre1} that \eqref{eq:appendix b.8} is true.\\
For \eqref{eq:appendix b.7}, assume that $k \ge 1$. Then, for $j \le 2k$, since $\beta_{k,j} \sim 1$, we have from Cauchy-Schwarz inequality and \eqref{eq:pre1} that
\begin{align*}
\sum_{j \le 2k}2^{j/2}\beta_{k,j}&\Big\|\eta_j(\tau-w(\xi))\int_{\R}\bar{v}_k(\xi, \tau')2^{-2k}(1+2^{-2k}|\tau - \tau'|)^{-4} d\tau'\Big\|_{L^2} \\
&\lesssim 2^{2k/2}\Big\|\eta_{\le2k}(\tau-w(\xi))\int_{\R}\bar{v}_k(\xi, \tau')2^{-2k}(1+2^{-2k}|\tau - \tau'|)^{-4} d\tau'\Big\|_{L^2} \lesssim \norm{\bar{v}_k}_{X_k}.
\end{align*}
For $2k < j \le 5k$, we also have $\beta_{k,j} \sim 1$. Thus, we have from \eqref{eq:pre1} that
\begin{align*}
\sum_{2k < j \le 5k}\sum_{j_1 \ge 0}&2^{j/2}\Big\|\eta_j(\tau-w(\xi))\int_{\R}\bar{v}_{k,j_1}(\xi, \tau')2^{-2k}(1+2^{-2k}|\tau - \tau'|)^{-4} d\tau'\Big\|_{L^2} \\
&\lesssim \sum_{2k < j \le 5k}\sum_{j_1 \ge 0}2^{j}2^{6k - 4\max(j,j_1)}\Big\|\int_{\R}\bar{v}_{k,j_1}(\xi, \tau')d\tau'\Big\|_{L_{\xi}^2} \lesssim \norm{\bar{v}_k}_{X_k}.
\end{align*}
For the rest term $(j > 5k)$, similarly as before, we get
\begin{align*}
\sum_{5k < j }\sum_{j_1 \ge 0}&2^{j/2}\beta_{k,j}\Big\|\eta_j(\tau-w(\xi))\int_{\R}\bar{v}_{k,j_1}(\xi, \tau')2^{-2k}(1+2^{-2k}|\tau - \tau'|)^{-4} d\tau'\Big\|_{L^2} \\
&\lesssim \sum_{5k < j }\sum_{j_1 \ge 0}2^{j}2^{(j-5k)/8}2^{6k - 4\max(j,j_1)}\Big\|\int_{\R}\bar{v}_{k,j_1}(\xi, \tau')d\tau'\Big\|_{L_{\xi}^2} \lesssim \norm{\bar{v}_k}_{X_k}.
\end{align*}
Now consider $k=0$. In this case, we can easily derive than $k \ge 1 $ case. Similarly as before, we have
\begin{align*}
\sum_{j \ge 0}\sum_{j_1 \ge 0}&2^{j/2}\beta_{0,j}\Big\|\eta_j(\tau-w(\xi))\int_{\R}\bar{v}_{0,j_1}(\xi, \tau')(1+|\tau - \tau'|)^{-4} d\tau'\Big\|_{L^2} \\
&\lesssim \sum_{j \ge 0 }\sum_{j_1 \ge 0}2^{3j/2}2^{- 4\max(j,j_1)}\Big\|\int_{\R}\bar{v}_{0,j_1}(\xi, \tau')d\tau'\Big\|_{L_{\xi}^2} \lesssim \norm{\bar{v}_0}_{X_0}.
\end{align*}
Hence, we proved Lemma \ref{lem:Ns} completely.
\end{proof}
\noindent Combining \eqref{eq:Es} and \eqref{eq:Ns} completes the proof of Propostion \ref{prop:linear}.
\end{proof}

\end{document}